\documentclass{article}

\usepackage{geometry}
\usepackage[mathcal]{euscript}
\usepackage{hyperref}
\usepackage{amsmath,amssymb,amsthm}
\usepackage{mathtools}
\geometry{a4paper}
\usepackage{dutchcal}

\DeclareMathAlphabet{\mathpzc}{OT1}{pzc}{m}{it}
\DeclareMathOperator{\dist}{dist}
\usepackage{xcolor}
\usepackage{cases}
\newtheorem{thm}{Theorem}[section]
\newtheorem{lemma}[thm]{Lemma}
\newtheorem{prop}[thm]{Proposition}
\newtheorem{cor}[thm]{Corollary}
\theoremstyle{remark}
\newtheorem{rem}[thm]{Remark}

\theoremstyle{definition}
\newtheorem{defn}[thm]{Definition}
\newtheoremstyle{Claim}{}{}{\itshape}{}{\itshape\bfseries}{:}{ }{#1}
\theoremstyle{Claim}

\newcommand{\Z}{{\mathbb{Z}}}
\newcommand{\T}{{\mathbb{T}^d}}

\renewcommand{\H}{\mathcal{H}}

\newcommand{\R}{\mathbb{R}}

\newcommand{\N}{\mathbb{N}}
\renewcommand{\L}{\mathcal{L}}

\newcommand\Hb{{}_\beta\H}

\newcommand\Vb{{}_{\beta}\mathcal{V}}

\newcommand{\norm}[1]{\left\lVert#1\right\rVert}

\def\ds{\displaystyle}


\titlepage
\title{Existence and regularity results for viscous Hamilton-Jacobi equations with  Caputo time-fractional derivative}
\author{Fabio Camilli and Alessandro Goffi}
\date{\today}

\begin{document}

\maketitle

\begin{abstract}
We study     existence, uniqueness and regularity properties of classical solutions to viscous   Hamilton-Jacobi equations with Caputo  time-fractional  derivative. Our study relies on a combination of a gradient bound  for the time-fractional Hamilton-Jacobi equation obtained via nonlinear adjoint method  and  sharp estimates in Sobolev and H\"older spaces for the corresponding linear problem. 
\end{abstract}

\noindent
{\footnotesize \textbf{AMS-Subject Classification} 35R11, 26A33, 45K05}.\\
{\footnotesize \textbf{Keywords}  time-fractional Hamilton-Jacobi equations, Caputo derivative, adjoint method, Schauder estimates}.
%


\tableofcontents

\section{Introduction}
In recent times, to model    memory effects and subdiffusive regimes in  applications   such as transport theory, viscoelasticity, rheology and     nonmarkovian stochastic processes, there has been an increasing interest in the study of time-fractional differential equations, i.e.   differential equations where the standard time derivative is replaced by a fractional one, typically a Caputo or a Riemann-Liouville derivative.  A significant number of papers has been devoted to extend properties holding in the standard setting to the fractional one (see for example  \cite{ACV,DK,LinLiu,Luchko,ZacherGlobal}). \par
Aim of this paper is to study   existence, uniqueness and regularity properties of classical solutions to the time-fractional Hamilton-Jacobi equation
\begin{equation}\label{i;1}
\left\{\begin{array}{ll}
\partial_{(0,t]}^{\beta} u(x,t)-\Delta u+H(x,Du)=V(x,t)\quad&(x,t)\in Q_T=\T\times (0,T),\\
u(x,0)=u_0(x)&x\in\T,
\end{array}
\right.
\end{equation}
where $\T$ is the unit torus, $H$   a convex, coercive Hamiltonian in $Du$ and $\partial_{(0,t]}^{\beta} u$, for $\beta\in(0,1)$,   denotes the Caputo time-fractional derivative
\[
\partial_{(0,t]}^{\beta} u(x,t)=\frac{1}{\Gamma(1-\beta)}\int_0^t\frac{\partial _\tau u(x,\tau)}{(t-\tau)^\beta}\,d\tau.
\]
In the study of \eqref{i;1}, we are mainly motivated by problems arising in Mean Field Games theory for subdiffusion processes (see \cite{cdm}), where typically $H=H(x,p)$ behaves like $|p|^\gamma$, $\gamma>1$ in $p$ and $V$ is a so-called regularizing coupling \cite{CLLP}. \\
Starting with \cite{LSU},  quasi-linear equations of the form \eqref{i;1}  with  standard time derivative have been extensively studied  in literature  and several
results are available  depending on growth conditions on $H$ with respect to the gradient variable (see  \cite{CG2,gomesbook,Porretta} and references therein). Recently, a theory of  weak solutions (in viscosity sense) for the Hamilton-Jacobi equation \eqref{i;1} has been investigated in \cite{giganamba,toppyangari,tly}. In \cite{kolokver},   existence results for \eqref{i;1} are obtained by means of Fourier transform in space. Here, we propose a study  of \eqref{i;1} based on  a combination of   a priori estimates for the linear problem combined  with  the so-called nonlinear adjoint method developed by   Evans (see  \cite{Evans}, \cite{gomesbook} and references therein).  This latter scheme was introduced to study more deeply the gradient shock structures of viscosity solutions to non-convex Hamilton-Jacobi equations. In particular, it relies on studying the adjoint of the linearization of the Hamilton-Jacobi equation via integration by parts rather than relying on maximum principles arguments. In this analysis, since we are dealing with time-fractional derivatives, we introduce the \textit{time-fractional Fokker-Planck equation}
\begin{equation}\label{adjointintro}
\begin{cases}
\partial_{[t,\tau)}^\beta\rho-\sigma\Delta \rho-\mathrm{div}(D_pH(x,Du)\rho)=0&\text{ in }Q_\tau:=\T\times(0,\tau)\ ,\\
\rho(x,\tau)=\rho_\tau(x)&\text{ in }\T,
\end{cases}
\end{equation}
$\tau\in(0,T)$, driven by the drift $b(x,t):=D_pH(x,Du(x,t))$, where
\[
\partial_{[t,\tau)}^\beta\rho=-\frac{1}{\Gamma(1-\beta)}\int_t^\tau\frac{ \partial_s\rho(x,s)}{(s-t)^\beta}\,ds ,
\]
stands for the backward Caputo derivative. We will henceforth refer to the solution $\rho$ of \eqref{adjointintro} as the \textit{adjoint variable}. 
In the first part of the paper, as an important  preliminary  step to the study of \eqref{i;1}, we review and extend maximal regularity results in Lebesgue and Holder spaces for abstract  linear differential equations of the form
\begin{equation}\label{i;2}
\left\{ 
\begin{array}{ll}
\partial_{(0,t]}^{\beta} u(t)+A u(t)=f(t),\quad \text{on $I$},\\
u(0)=u_0,
\end{array}
\right.
\end{equation}
where $I\subseteq\R$, $X$ a Banach space with norm $\|\cdot\|$, $u:I\to X$, $A:D(A)\to X$ is an unbounded linear operator, being $D(A)$ a linear subspace of $X$ (the so-called domain of $A$) equipped with the graph norm $\|x\|_{D(A)}=\|x\|+\|Ax\|$ and $u_0$   belongs to a suitable Banach space. 
From the functional viewpoint, the main peculiarity of \eqref{i;2} is that the usual semigroup property 
fulfilled by the solution operator  is lost because of the memory effect due to the fractional derivative.   
However,   by formally taking the Laplace transform of the equation,    \eqref{i;2} can be rewritten as the following abstract Volterra  equation  
\begin{equation}\label{i;3}
u(t)= u(0)+\int_0^tg_{\beta}(t-\tau) Au(\tau)d\tau+F(t),
 \end{equation}
where $g_{ \beta}(t)=t^{\beta-1}/\Gamma(\beta)$ and $F(t)=\frac{1}{\Gamma(\beta)}\int_0^t(t-\tau)^{\beta-1}f(\tau)d\tau$.
The Volterra equation  \eqref{i;3}  allows to exploit     regularity results   in Lebesgue and H\"older's spaces. In particular, concerning estimates in Lebesgue space,     we recall the classical maximal regularity result  in \cite{ZacherIBVP,Zmax} (see also the monographs \cite{PrussSimonett,PrussBook} and references therein), which is obtained     in the parabolic class
\begin{equation}\label{i;5}
\Vb_p^2(Q):=H_p^\beta(I;L^p (\T))\cap L^p(I;W^{2,p}(\T)).
\end{equation}
Moreover, with the aim of giving a self-contained presentation,  we     provide a detailed discussion of the embeddings for the parabolic spaces $\Vb_p^2(Q)$,    since these results are the basis for the  subsequent study of  \eqref{i;1} via  linearization arguments. We also recall some tools from interpolation theory in Banach spaces, inspired by \cite{KrylovbookSPDE,KrylovJFA} and lately developed in \cite{CG1},  for equations of the form \eqref{i;1} with  standard time derivative. \par
Concerning H\"older's estimates, the analysis of Volterra equations in such framework began with \cite{ClementTAMS,Prussmaximal}.
In the following, we   provide a PDE oriented proof  of these estimates,   which is reminiscent of the classical approach via semigroup  theory to abstract Cauchy problems (see e.g. \cite{LunardiBook} and references therein). For the fractional Laplacian, Schauder's estimates have been investigated  in \cite{CF}, while a different approach, based on interpolation theory methods, has been developed in \cite{CG1}. Here, a crucial ingredient is a Duhamel-like formula for \eqref{i;1} defined via the so-called Mittag-Leffler families.\par
The previous preliminary tools   provide the basis for the analysis of the  time-fractional Hamilton-Jacobi equations. The main result of the paper is the following  existence and uniqueness result for classical solutions to \eqref{i;1} with regularizing right-hand side $V$ (assumptions \eqref{H1}-\eqref{H5} are detailed at the beginning of  Section \ref{sec:existence}).\\
\begin{thm}\label{existenceHamilton-Jacobi}
	Let $\beta\in(0,1)$, $H$ satisfying \eqref{H1}-\eqref{H5}, $p>d+2/\beta$, $u_0\in W_{p}^{2-\frac{2}{p\beta}}(\T)$ and $V\in L^p(Q_T)$. Then, there exists  $\tau^*<T$ such that \eqref{i;1} admits a unique   (strong) solution $u\in \Vb^{2}_p(\T\times(0,\tau^*))$.\\
	Moreover, if $\beta\in(\frac12,1)$, $u_0\in C^{4+\frac{\gamma}{\beta}}(\T)$ and $V\in C([0,T];C^{2+\frac{\gamma}{\beta}}(\T))$, there exists a unique (global) solution $u\in C([0,T];C^{4+\frac{\gamma}{\beta}}(\T))$ to \eqref{i;1}, and the following estimate holds
	\begin{equation*}\label{Lib}
	\norm{u}_{C([0,T];C^{4+\frac{\gamma}{\beta}}(\T))}\leq C(\norm{V}_{C([0,T];C^{2+\frac{\gamma}{\beta}}(\T))}+\norm{u_0}_{C^{4+\frac{\gamma}{\beta}}(\T)})\ .
	\end{equation*}
	\end{thm}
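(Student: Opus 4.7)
The plan splits the argument into a short-time existence step in the maximal regularity space $\Vb_p^2$ and an a priori Lipschitz estimate for $Du$ obtained via the nonlinear adjoint method, which, combined with the Schauder theory recalled earlier, upgrades the short-time strong solution to a global classical one.

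For the first assertion I would run a Banach fixed-point argument on a closed ball $\mathcal{B}_R \subset \Vb_p^2(\T\times(0,\tau))$. For $v \in \mathcal{B}_R$, define $\Phi(v) = u$ as the unique solution (given by the $L^p$-maximal regularity theorem recalled in Section 1) of the linear Cauchy problem $\partial_{(0,t]}^\beta u - \Delta u = V - H(x, Dv)$, $u(0) = u_0$. The threshold $p > d + 2/\beta$ is exactly the one ensuring the embedding $\Vb_p^2 \hookrightarrow C([0,\tau]; C^1(\T))$ recalled in the preliminaries, so that $H(x, Dv)$ is bounded on $Q_\tau$ with a bound controlled on $\mathcal{B}_R$; an interpolation-type gain $\|Dv_1 - Dv_2\|_{L^\infty} \leq C\tau^\delta \|v_1 - v_2\|_{\Vb_p^2}$ for some $\delta > 0$ (also a consequence of the embedding) then makes $\Phi$ a contraction once $R$ is fixed by the data and $\tau^*$ is small enough. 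Uniqueness within $\Vb_p^2$ follows from the same linearization.

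For the global classical solution, the heart of the proof is a uniform bound on $\|Du\|_{L^\infty(Q_T)}$, which I would carry out by Evans' nonlinear adjoint method adapted to the Caputo setting. Differentiating \eqref{i;1} in $x_k$, the function $w = \partial_{x_k} u$ solves $\partial_{(0,t]}^\beta w - \Delta w + D_pH(x, Du) \cdot Dw = V_{x_k} - H_{x_k}$. For a fixed $(x_0, \tau)$ and a smooth approximation $\rho_\tau$ of $\delta_{x_0}$, let $\rho$ solve the adjoint Fokker-Planck equation \eqref{adjointintro}. Testing $w$ against $\rho$ via the duality
\begin{equation*}
\int_0^\tau\!\!\int_\T \bigl(\partial_{(0,t]}^\beta w\bigr)\, \rho \,dx\,dt = \int_0^\tau\!\!\int_\T w\, \bigl(\partial_{[t,\tau)}^\beta \rho\bigr) \,dx\,dt + \text{boundary terms}
\end{equation*}
between forward and backward Caputo derivatives, combined with the mass conservation $\int_\T \rho(\cdot,t)\,dx = 1$ for \eqref{adjointintro}, yields a representation of $w(x_0,\tau)$ in terms of $u_0$, $V_{x_k}$ and $H_{x_k}$; the structural assumptions \eqref{H1}-\eqref{H5} then give the sought pointwise bound on $Du$.

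With $Du$ uniformly controlled, the right-hand side $V - H(x,Du)$ of \eqref{i;1} is H\"older continuous; the fractional Schauder estimates recalled in the preliminaries upgrade $u$ to $C([0,T]; C^{2+\gamma/\beta}(\T))$ with matched time regularity, the chain rule lifts $H(x,Du)$ accordingly, and a second application of the Schauder estimate yields $u \in C([0,T]; C^{4+\gamma/\beta}(\T))$ together with the quantitative bound; the solution extends globally by continuation of the short-time strong solution, whose $\Vb_p^2$-norm cannot blow up thanks to the a priori Lipschitz bound. The restriction $\beta > 1/2$ enters precisely at this bootstrap step, ensuring that the temporal H\"older exponent induced by the spatial one $\gamma/\beta$ matches what the Schauder iteration demands of the product $D_p H(x,Du) \cdot Dw$. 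The principal difficulty I anticipate is the adjoint gradient estimate itself: making the Caputo integration-by-parts formula rigorous and establishing mass conservation plus basic $L^1$-bounds for the time-fractional Fokker-Planck equation with merely bounded drift has no direct analogue in the standard parabolic case and is where the memory kernel really has to be handled.
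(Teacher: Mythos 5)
Your overall architecture (contraction mapping in $\Vb_p^2$, adjoint-method gradient estimate, Schauder bootstrap, continuation) matches the paper's, and the local existence step is essentially the paper's Step 1. However, there is a genuine gap in the central estimate. You claim that testing $w=\partial_{x_k}u$ against the adjoint variable $\rho$ with $\rho_\tau\to\delta_{x_0}$ "yields a representation of $w(x_0,\tau)$" and hence a uniform bound on $\|Du\|_{L^\infty(Q_T)}$. This is exactly where the memory kernel breaks the classical argument: the fractional integration-by-parts formula (Lemma \ref{intparts}) produces the terminal pairing
\begin{equation*}
\int_\T \bigl(I^{1-\beta}_{(0,\tau]}w\bigr)(x,\tau)\,\rho_\tau(x)\,dx ,
\end{equation*}
i.e.\ the Riemann--Liouville integral of $w$ at time $\tau$, not $w(\cdot,\tau)$ itself. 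Passing to the supremum over $\rho_\tau$ therefore only controls $I^{1-\beta}_{(0,\tau]}\bigl(\|Du(\cdot,t)\|_{L^\infty(\T)}\bigr)$, a nonlocal-in-time quantity. The paper (Theorem \ref{semicest}) stops precisely there and converts this into $\|Du\|_{L^p(Q_\tau)}\le C$ for every finite $p$, uniformly in $\tau$; it never obtains, and does not need, a pointwise Lipschitz bound. Your subsequent steps should be adjusted accordingly: global existence is closed not by Hölder continuity of $V-H(x,Du)$ coming from an $L^\infty$ gradient bound, but by comparing $u$ with the solution $v$ of the linear problem $\partial^\beta_{(0,t]}v-\Delta v=V$ and estimating $\|u-v\|_{\Vb_p^2(Q_\tau)}\le C\|H(x,Du)\|_{L^p(Q_\tau)}\le C(\|Du\|^\gamma_{L^{p\gamma}(Q_\tau)}+1)$, which the $L^p$ bound controls uniformly in $\tau$. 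The Hölder regularity of the right-hand side needed for the Schauder bootstrap then comes from the embedding $\Vb_p^2\hookrightarrow C([0,\tau];W^{2-2/(p\beta),p}(\T))$ and $p>d+2/\beta$ (giving $|Du|\in C([0,\tau];C^{\gamma/\beta}(\T))$), not from the adjoint estimate. Finally, the restriction $\beta>1/2$ does not enter through a matching of temporal and spatial Hölder exponents in the product $D_pH\cdot Dw$ as you suggest; it is forced by the Schauder estimate of Theorem \ref{Schauder} (whose Mittag-Leffler/K-method proof needs $4-2/\beta>0$) and by the well-posedness of the backward Fokker--Planck terminal condition.
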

The proof of the previous theorem relies on a  contraction mapping principle, in the spirit of recent   works for time-fractional quasi-linear equations (see \cite{Andrade,ZacherGlobal}). Here, the result is achieved by combining the analysis of the parabolic Sobolev spaces outlined above with an a priori gradient bound on the solution of \eqref{i;1},   which allows to treat \eqref{i;1} as a perturbation of a time-fractional heat equation. For the latter   argument, a crucial step is an estimate of      crossed quantity \begin{equation}\label{i;4}
\iint|Du|^{\gamma}\rho \, dxdt\ ,
\end{equation}
with $\rho$   solving \eqref{adjointintro}, which is accomplished by means of  the aforementioned nonlinear adjoint method. We remark that  bounds of   type  \eqref{i;4} play an important role in Mean Field Games theory and stochastic control \cite{CG1,gomesbook,Porretta}, Sobolev regularity for the Fokker-Planck equation  (\cite{BKRS,MPR,CT}), Hamilton-Jacobi equations  \cite{CG2}. 
Some comments on  space-time H\"older regularity results for \eqref{i;1} as well as on the restriction $\beta\in (1/2,1)$ are provided in Remark \ref{stclassical}.\\

The paper is organized as follows. In Section \ref{sec;notations}, we introduce the functional spaces where the problem is studied and we discuss several of their properties.  Section \ref{sec;heat_eq} is devoted to the study of the time-fractional heat equation \eqref{i;3} and Section \ref{sec;schauder} to the corresponding Schauder estimates. Finally, in Section \ref{sec:existence}, we prove the main results of the paper, existence and  gradient bound on  the solution of \eqref{i;1}.
\bigskip

{\bf Acknowledgements.} The authors are members of the Gruppo Nazionale per l'Analisi Matematica, la Probabilit\`a e le loro Applicazioni (GNAMPA) of the Istituto Nazionale di Alta Matematica (INdAM). The second-named author wishes to thank R. Schnaubelt and R. Zacher for useful discussions and references, the Department SBAI,  Sapienza University of Rome and the Department of Mathematics of the University of Padova for the hospitality during the preparation of the paper.
\section{Notations and preliminaries}\label{sec;notations}
In this section, we shortly review some  functional spaces and their properties. Moreover we prove some  embedding results for fractional Sobolev spaces.
\subsection{H\"older spaces}\label{subsec:holder}
We recall here the definition of H\"older spaces on the torus and then define space-time H\"older spaces typically associated to fractional heat-type equations. Let $\alpha\in(0,1)$ and $k$ be a nonnegative integer. A real-valued function $u$ defined on $\T$ belongs to $C^{k+\alpha}(\T)$ if $u\in C^k(\T)$ and 
\begin{equation*}
[D^{r}u]_{C^\alpha(\T)}:=\sup_{x \neq y\in\T}\frac{|D^ru(x)-D^ru(y)|}{\dist(x,y)^{\alpha}}<\infty,
\end{equation*}
for each multi-index $r$ such that $|r|=k$, where $\dist(x,y)$ is the geodesic distance from $x$ to $y$ on $\T$. Note that in the definition of the previous (and following) seminorm,  $\dist(x,y)$ can be replaced by the euclidean distance $|x-y|$  and the supremum can be taken in $\R^d$ since $u$ can be seen as a periodic function on $\R^d$.
%

%

We first consider some vector-valued H\"older spaces. 
Let $X$ be a Banach space and $\gamma\in(0,1)$. Denote by $C^{\gamma}(I; X)$, $I \subseteq [0,T]$, the space of functions $u: I \rightarrow X$ such that the norm defined as
\begin{equation*}
\norm{u}_{C^{\gamma}(I; X)}:=\sup_{t\in I}\norm{u(t)}_X+\sup_{t \neq \tau}\frac{\norm{u(t)-u(\tau)}_X}{|t-\tau|^\gamma}
\end{equation*}
is finite. Hence,  specializing to $X=C^{\alpha}(\T)$, $\alpha\in(0,1)$, we have that
 $C^{\gamma}(I; C^{\alpha}(\T))$ is the set of functions $u: I \rightarrow C^{\alpha}(\T)$ with finite norm
\begin{equation*}
\norm{u}_{C^{\gamma}(I;C^{\alpha}(\T))}:=\norm{u}_{\infty; Q}+\sup_{t\in I }[u(\cdot,t)]_{C^{\alpha}(\T)}
+[u]_{C^{\gamma}(I;C^{\alpha}(\T))}\ ,
\end{equation*}
where the last seminorm being defined as
\begin{equation*}
[u]_{C^{\gamma}(I;C^{\alpha}(\T))}:=\sup_{t \neq \tau\in I}\frac{\norm{u(\cdot,t)-u(\cdot,\tau)}_{C^{\alpha}(\T)}}{|t-\tau|^{\gamma}}\ .
\end{equation*}
Let now $Q = \T \times I$. We define
\begin{equation*}
[u]_{C^{\alpha}_x(Q)}:=\sup_{t\in I}[u(\cdot,t)]_{C^{\alpha}(\T)}
\end{equation*}
and
\begin{equation*}
[u]_{C^{\gamma}_t(Q)}:=\sup_{x\in\T}[u(x,\cdot)]_{C^{\gamma}(I)}
\end{equation*}

When dealing with regularity of parabolic equations with fractional operators, we also need H\"older spaces with different regularity in time and space. Following the lines of \cite{CF} and \cite{FRRO}, we define $\mathcal{C}^{\alpha,\gamma}(Q)$ as the space of continuous functions $u$ with finite H\"older parabolic seminorm  
\begin{equation}\label{fracholder}
[u]_{\mathcal{C}^{\alpha,\gamma}(Q)}:= [u]_{C^{\alpha}_x(Q)}+[u]_{C^{\gamma}_t(Q)}.
\end{equation}
The norm in the space $\mathcal{C}^{\alpha,\gamma}(Q)$ is defined naturally as
\begin{equation*}
\norm{u}_{\mathcal{C}^{\alpha,\gamma}(Q)}:=\|u\|_{\infty; Q}+[u]_{\mathcal{C}^{\alpha,\gamma}(Q)}\ .
\end{equation*}
Note that if $\gamma = \alpha/2$, the space $\mathcal{C}^{\alpha,\gamma}(Q)$ coincides with $C^{\alpha, \alpha/2}(Q)$. For these latter classical parabolic H\"older spaces, we refer the interested reader to \cite{LSU} for a detailed discussion. As pointed out in \cite{FRRO}, the following equivalence between seminorms holds
\begin{equation*}
[u]_{\mathcal{C}^{\alpha,\gamma}(Q)}\sim \sup_{x,y\in\T,t,\tau\in I}\frac{|u(x,t)-u(y,\tau)|}{\dist(x,y)^{\alpha}+|t-\tau|^{\gamma}} \ .
\end{equation*}

All the spaces above can be defined analogously on $\R^d$ and $ \R^d \times I$. Moreover, if $u$ is a periodic function in the $x$-variable, norms on $\T$ and $\R^d$ coincide, e.g. $\|u\|_{C^\alpha(\T)} = \|u\|_{C^\alpha(\R^d)}$, etc.

\begin{rem}\label{holderinc}
It is worth noticing that we have to distinguish the spaces $C^{\gamma}(I;C^{\alpha}(\T))$ and $\mathcal{C}^{\alpha,\gamma}(Q)$, since it results
\begin{equation*}
C^{\gamma}(I;C^{\alpha}(\T))\subsetneq \mathcal{C}^{\alpha,\gamma}(Q)\ .
\end{equation*}
This can be easily seen by taking $\gamma=\alpha$ and a periodic function in the $x$-variable that behaves like $(x+t)^{\alpha}$ in a neighbourhood of $(0,0)$ (see in particular \cite[Section 4]{Rabier}).
\end{rem}
\subsection{Fractional Sobolev and Bessel potential spaces}\label{susb}
This section is devoted to collect the definitions of Lebesgue and Sobolev spaces we will use throughout the paper. Recall that $L^p(\T)$ is the space of all measurable and periodic functions belonging to $L_{\rm loc}^p(\R^d)$ with norm $\|\cdot\|_{p}=\|\cdot\|_{L^p((0,1)^d)}$. If $k$ is a nonnegative integer, $W^{k, p}(\T)$ consists of $L^p(\T)$ functions with (distributional) derivatives in $L^p(\T)$ up to order $k$. For $\mu\in\R$ and $p\in(1,\infty)$, we can define the Bessel potential space $H_p^{\mu}(\T)$ as the space of all distributions $u$ such that $(I-\Delta)^{\frac{\mu}{2}}u\in L^p(\T)$, where $(I-\Delta)^{\frac{\mu}{2}}u$ is the operator defined in terms of Fourier series
\begin{equation*}
(I-\Delta)^{\frac{\mu}{2}} u(x)=\sum_{k\in\Z^d}(1+4\pi^2|k|^2)^{\frac{\mu}{2}} \hat u(k) e^{2\pi ik\cdot x}\ ,
\end{equation*}
and
\begin{equation*}
\hat u(k)=\int_{\T}u(x)e^{-2\pi ik\cdot x}dx\ .
\end{equation*}
The norm in $H_p^{\mu}(\T)$ will be denoted by 
\begin{equation*}
\norm{u}_{\mu,p}:=\norm{(I-\Delta)^{\frac{\mu}{2}}u}_{p}.
\end{equation*}
Note that $H_p^{\mu}(\T)$ coincides with $W^{\mu, p}(\T)$ when $\mu$ is a nonnegative integer and $p\in(1,\infty)$ (see \cite[Remark 2.3]{CG1}).

We recall that Bessel potential spaces can be also constructed via complex interpolation (for additional details, we refer to \cite[Chapter 2]{LunardiSNS}, \cite{CG1} and references therein). More precisely, for $\mu\in\R$, the space $H_p^{\mu}(\T)$ can be obtained by complex interpolation between $L^p(\T)$ and $W^{k,p}(\T)$ as
\[
H^\mu_p(\T) \simeq [L^p(\T),W^{k,p}(\T)]_{\theta}, \qquad \text{where $\mu = k\theta$}.
\]
Bessel potential spaces can be defined also in $\R^d$ in the same manner. We further recall that the operator $(I-\Delta)^{\frac{\mu}{2}}$ maps isometrically $H_p^{\eta+\mu}(\T)$ to $H_p^{\eta}(\T)$ for any $\eta,\mu\in\R$ (see \cite[Remark 2.3]{CG1}).

We shortly describe the so-called K-method, which   allows to define fractional Sobolev spaces $W^{\mu,p}$ as ``intermediate" spaces between $L^p(\T)$ and $W^{k,p}(\T)$, but also H\"older's spaces when $p=\infty$. Let $X,Y$ be Banach spaces with $Y\subset X$, $\theta\in[0,1]$ and $p\in[1,\infty]$. For every $x\in X$ and $t>0$, define
\[
K(t,x,X,Y)=\inf_{x=a+b,a\in X,b\in Y}\|a\|_X+t\|b\|_Y\ .
\]
If $I\subset(0,\infty)$, we denote by $L^p_{*}(I)$ the Lebesgue space $L^p(I,\frac{dt}{t})$ and $L^{\infty}_{*}(I)=L^{\infty}(I)$. We define the real interpolation space $(X,Y)_{\theta,p}$ between the Banach spaces $X,Y$ as
\[
(X,Y)_{\theta,p}=\{x\in X+Y:t\mapsto t^{-\theta}K(t,x,X,Y)\in L^p_{*}(0,+\infty)\},
\]
endowed with the norm
\[
\|x\|_{\theta,p}=\|t^{-\theta}K(t,x,X,Y)\|_{L^p_{*}(0,+\infty)}\ .
\]
It can be proved that $(X,Y)_{\theta,p}$ is a Banach space. Then, one shows (see e.g. \cite[Example 1.8]{LunardiSNS}) that
\[
(C(\T),C^1(\T))_{\theta,\infty}=C^\theta(\T)
\]
and
\[
(L^p(\T),W^{1,p}(\T))_{\theta,p}=W^{\theta,p}(\T)\ .
\]
We remark that such tool turns out to be useful to prove H\"older regularity of the solution of the time-fractional heat equation in Theorem \ref{Regularity}. We finally point out that the aforementioned isometry properties via $(I-\Delta)^{\frac{\mu}{2}}$ transfer also to the fractional Sobolev spaces $W^{\mu,p}$ (see e.g. \cite[Theorem 6.2.7]{BL}).
\subsection{Parabolic Sobolev spaces}\label{sec;parab_sob}
In the next section we investigate some properties of the space 
\[
\Vb_p^2(Q):=H_p^\beta(I;L^p(\T))\cap L^p(I;W^{2,p}(\T)),
\]
which is the suited one to deal with $L^p$-maximal regularity for time-fractional PDEs (see \cite{PrussBook,PrussSimonett,ZacherGlobal,ZacherIBVP}). 
 Here, the vector-valued space $H_p^\beta(I;X)$, $X$ being a Banach space and $I$ an open subset of the real line, can be defined via the aforementioned complex interpolation as
\begin{equation}\label{besselone}
H_p^\beta(I;X)\simeq [L^p(I;X);W^{1,p}(I;X)]_{\beta}
\end{equation}
for $\beta\in(0,1)$. Moreover, the vector-valued Slobodeckij scale $W^{\beta,p}(I;X)$, $\beta\in\R^+\backslash\N$ consists of all functions $u\in W^{[\beta],p}(I;X)$ such that $[D^\alpha u]_{\beta-[\beta],p}<\infty$ for $\alpha=[\beta]$, where $[\beta]$ is the integer part of $\beta$, being
\[
[u]_{\theta,p}^p:=\int_{I\times I}\frac{\|u(\omega)-u(\eta))\|_{X}^p}{|\omega-\eta|^{d+\theta p}}d\theta d\eta\ , \theta\in(0,1)\ .
\]
Since one usually needs estimates in $I\subseteq[0,T]$ stable at some time, it is necessary to have a control of the trace, e.g. on the hyperplane $t=0$. In the classical parabolic case, the initial trace can be characterized via the trace method in interpolation theory in Banach space (see e.g. \cite[Section 1.2]{LunardiSNS} and the discussion in \cite[Remark B.4]{CG1}, see also \cite[Section 3.4]{PrussSimonett}). For instance, in the case of $W^{2,1}_p(\T\times I)$ the initial trace $u(0)$ turns out to belong to the fractional Sobolev space $W^{2-2/p,p}(\T)\simeq (L^p(\T),W^{2,p}(\T))_{1-1/p,p}$ (see e.g. \cite[Lemma II.3.4]{LSU} and \cite[Corollary 1.14]{LunardiSNS}). In the time-fractional case,  it is useful to transform the time-fractional PDE into an abstract Volterra equations of the form
 \eqref{i;3} and use the resolvent approach, which can be seen as a generalization of the classical   semigroup analysis to abstract Cauchy problems. Indeed, by classical results (see e.g. \cite[Theorem 4.2]{PrussBook}) we know that by the subordination principle $A=-\Delta$ admits a resolvent. Exploiting properties of Laplace transform and the resolvent family of the problem (see e.g. the discussion in \cite[Section 5.4]{PrussSimonett}) it turns out that the space of initial traces of functions in $\Vb_p^2$ is the fractional space $W^{2-2/(p\beta),p}$  for $\beta>1/p$ (see \cite[Proposition 4.5.14]{PrussSimonett} and also \cite{MV})

Recently,   time-fractional PDEs with null initial trace have been investigated in the context of the parabolic spaces 
\begin{equation*}
\Hb_p^{\mu}(Q) := \{u \in L^p(I;H^\mu_p(\T)), \, \partial_{(0,t]}^{\beta} u \in  L^p(I;H_p^{\mu-2}(\T)) \} 
\end{equation*}
for $\mu=2$,   $p>1$ (see e.g. \cite{DK,KKL}).
The previous Sobolev spaces are clearly reminiscent of the  parabolic spaces $W^{2,1}_p$ typically associated to the heat operator $\partial_t-\Delta$  (i.e. when $\mu=2$ and $\beta=1$).
We note that the space $\Vb_p^2$ is isomorphic to the parabolic space $\Hb_p^2$ in the case of zero initial trace $u(0)=0$ (see e.g. \cite[Proposition 2]{CLS}). Indeed, this can be seen  by the representation in \eqref{besselone} setting $X=W^{2,p}(\T)$
and exploiting that, for sufficiently smooth $u$, $\partial_{(0,t]}^{\beta} u=\partial_t(g_{1-\beta}\star u)$, which in turn allows to write
\begin{multline*}
\{u\in L^p(I;W^{2,p}(\T)): \partial_{(0,t]}^{\beta} u\in L^p(Q)\ ,u(0)=0\}\\
\simeq \{ u\in L^p(I;W^{2,p}(\T)): \partial_t(g_{1-\beta}\star u)\in L^p(Q)\ ,u(0)=0\}\\
\simeq \{ u\in L^p(I;W^{2,p}(\T)): g_{1-\beta}\star u\in W^{1,p}(0,T;L^p(\T))\ ,u(0)=0\}
\end{multline*}
and concluding using \cite[Section 1.15.3]{trbookinterpolation}.

In Section \ref{sec;embedd}, we will also cover the case of parabolic spaces frequently associated to the fully nonlocal operator 
\begin{equation}\label{space_time_operator}
\partial_{(0,t]}^{\beta}+(-\Delta)^s\ ,
\end{equation}
where $(-\Delta)^s$ denotes the standard fractional Laplacian (see e.g. \cite{CG1} and references therein for a treatment on the torus). To this aim, we denote by
\[
\Vb^{2s}_p(Q):=H_p^\beta(I;L^p(\T))\cap L^p(I;H^{2s}_p(\T))\ .
\]
Such spaces were first investigated in the context of stochastic PDEs in \cite{CL}. In \cite{CG1}, they are studied in connection with space-fractional   PDEs   using  arguments inspired by \cite{KrylovbookSPDE,KrylovJFA}. These spaces are natural in the context of parabolic PDEs and the corresponding  $L^p$   theory is crucial in the  study of parabolic regularity properties even for equations with divergence-type terms (see \cite{BKRS,CG2,CT,MPR,Porretta}). 


\subsection{Embedding results for parabolic Sobolev spaces.}\label{sec;embedd}
In this section, we prove some embedding results for the fractional parabolic spaces introduced in Section \ref{sec;parab_sob} which will be applied in the study of \eqref{i;1}, though they are of independent interest.
We first recall some classical embeddings for fractional Sobolev spaces $W^{\mu,p}$ and $H_p^\mu$, $\mu\in\R$.
\begin{lemma}\label{inclstatW}
	\begin{itemize}
		\item[(i)] Let $\nu,\mu\in\R$ with $\nu\leq\mu$, then $W^{\mu,p}(\T)\subset W^{\nu,p}(\T)$.
		\item[(ii)] If $p\mu>d$ and $\mu-d/p$ is not an integer, then $W^{\mu,p}(\T)\subset C^{\mu-d/p}(\T)$.
		\item[(iii)] Let $\nu,\mu\in\R$ with $\nu\leq\mu$, $p,q\in(1,\infty)$ and
		\[
		\mu-\frac{d}{p}=\nu-\frac{d}{q}\ ,
		\]
		then $W^{\mu,p}(\T)\subset W^{\nu,q}(\T)$.
	\end{itemize}
\end{lemma}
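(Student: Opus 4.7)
The strategy is to reduce all three statements to the classical integer-order Sobolev embeddings on the torus via the real-interpolation description of the fractional scale $W^{\mu,p}(\T)$ recalled in Subsection~\ref{susb}, together with the characterization $(C(\T),C^1(\T))_{\theta,\infty} = C^\theta(\T)$ of H\"older spaces. For part (i), the case $\nu \leq \mu$ both nonnegative integers is immediate from the definition of $W^{k,p}(\T)$. For fractional exponents I would fix an integer $k \geq \mu$ and use reiteration (starting from the base identity $(L^p,W^{1,p})_{\theta,p} = W^{\theta,p}$) to write $W^{\mu,p}(\T) \simeq (L^p(\T),W^{k,p}(\T))_{\mu/k,p}$ and analogously for $W^{\nu,p}$. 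The standard monotonicity of the real-interpolation functor in the first parameter, namely that for $Y \hookrightarrow X$ the family $(X,Y)_{\theta,p}$ is a decreasing chain in $\theta$, then yields the inclusion. The case $\nu<0$ is handled by duality, identifying $W^{\nu,p}(\T)$ with the dual of $W^{-\nu,p'}(\T)$.

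For part (ii), I would first recall the classical Sobolev--Morrey embedding $W^{k,p}(\T) \hookrightarrow C^{k-d/p}(\T)$ for integer $k$ with $pk>d$ and $k - d/p \notin \N$, proved by the usual Morrey-type integral estimate on cubes. Next, choose integers $k_1 < \mu < k_2$ with both $k_i - d/p$ positive and non-integer, and interpolate the endpoint embeddings using $(W^{k_1,p},W^{k_2,p})_{\theta,p} \simeq W^{\mu,p}$ on the source side and $(C^{k_1-d/p},C^{k_2-d/p})_{\theta,\infty} \simeq C^{\mu-d/p}$ on the target side, with a common $\theta\in(0,1)$. The hypothesis $\mu - d/p \notin \N$ is precisely what guarantees that the target parameter lies strictly inside $(0,1)$, so the interpolation identity on the Hölder side is valid and no forbidden endpoint is hit.

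For part (iii), start from the classical Gagliardo--Nirenberg--Sobolev embedding $W^{k,p}(\T) \hookrightarrow W^{m,q}(\T)$ for integers $k \geq m$ with $k - d/p = m - d/q$. Pick integer pairs straddling $(\mu,\nu)$ and satisfying the same scaling relation; the constraint $\mu - d/p = \nu - d/q$ forces the interpolation parameter $\theta$ to be identical on both sides of the embedding, so real interpolation transfers the embedding to the fractional exponents exactly as in part (ii).

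The main technical point throughout is bookkeeping: one has to choose integer endpoints so that the scaling relation is preserved by interpolation and so that the target exponent avoids the integer values excluded from the interpolation identities (this is where the non-integrality assumption in (ii) becomes essential). Once these choices are made, the interpolation step is a direct application of the general interpolation theorem for bounded linear operators between real interpolation scales, and no further analytic work is required.
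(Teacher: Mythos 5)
Your proposal takes a genuinely different route from the paper: the authors dispose of this lemma in one line, citing the $\R^d$ theory in Triebel (Section 2.8.1 of \cite{trbookinterpolation}) and transferring to the periodic setting as in \cite[Lemma 2.5]{CG1}, whereas you attempt a self-contained reduction to integer-order embeddings by real interpolation. That is a reasonable ambition, but as written the ``bookkeeping'' you defer is not mere bookkeeping; in each part the required integer endpoints can fail to exist, and this is exactly why the standard references prove these embeddings directly on the Besov/Triebel--Lizorkin scale rather than by interpolating the classical integer-order statements.

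Concretely: in (i), $(L^p,W^{k,p})_{\theta,p}$ is the Besov space $B^{\theta k}_{p,p}$, which coincides with $W^{\theta k,p}$ only when $\theta k\notin\N$ (for $p\neq 2$); so when the target exponent $\nu$ is an integer and $p>2$, monotonicity of the interpolation scale lands you in $B^{\nu}_{p,p}\supsetneq W^{\nu,p}$ and does not give the claimed inclusion --- one needs instead the sandwich between $B^s_{p,p}$ and the Bessel scale $H^s_p$ together with a strict drop in smoothness. In (ii), your plan requires integers $k_1<\mu<k_2$ with $k_i-d/p\notin\N$, which is impossible whenever $d/p\in\N$ (e.g.\ $d=2$, $p=2$, $\mu=3/2$), even though the hypothesis $\mu-d/p\notin\N$ is then still satisfiable; the fix is to embed into the Zygmund scale $B^{\mu-d/p}_{\infty,\infty}$, valid for all exponents, and only then invoke $B^s_{\infty,\infty}=C^s$ for non-integer $s$. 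In (iii), the endpoint targets $m_i=k_i-d/p+d/q$ are integers only if $d/p-d/q\in\Z$, which fails for generic $q$, so there is in general no pair of classical integer-order embeddings to interpolate; moreover, interpolating the inclusion with parameter $(\theta,p)$ produces $B^{\nu}_{q,p}$ rather than $W^{\nu,q}=B^{\nu}_{q,q}$, and one must additionally use $\ell^p\hookrightarrow\ell^q$ for $p\le q$ to conclude. Each gap is repairable, but the repairs amount to invoking the Besov-scale embedding theorems that Triebel proves directly --- at which point one may as well cite them, as the paper does, and concentrate on the genuinely nonstandard step here, namely the transfer from $\R^d$ to $\T$ for periodic functions.
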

\begin{proof}
	These results are well known in $\R^d$
	(see \cite[Section 2.8.1]{trbookinterpolation}). The transfer  to the periodic setting can be obtained exactly as in \cite[Lemma 2.5]{CG1}.
\end{proof}
For the proof of following result we refer to  \cite[Lemma 2.5]{CG1}.
\begin{lemma}\label{inclstatH}
	\begin{itemize}
		\item[(i)] Let $\nu,\mu\in\R$ with $\nu\leq\mu$, then $H^{\mu}_p(\T)\subset H^{\nu}_p(\T)$.
		\item[(ii)] If $p\mu>d$ and $\mu-d/p$ is not an integer, then $H^{\mu}_p(\T)\subset C^{\mu-d/p}(\T)$.
		\item[(iii)] Let $\nu,\mu\in\R$ with $\nu\leq\mu$, $p,q\in(1,\infty)$ and
		$$\mu-\frac{d}{p}=\nu-\frac{d}{q},$$  then $H^{\mu}_p(\T)\subset H^{\nu}_q(\T)$.
	\end{itemize}
\end{lemma}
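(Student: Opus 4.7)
My plan is to follow the same strategy that was sketched for Lemma \ref{inclstatW}, namely reduce the three statements to the corresponding classical embeddings for Bessel potential spaces on $\R^d$ (see e.g.\ \cite[Section 2.8.1]{trbookinterpolation}) via a periodization/cutoff procedure, with part (i) handled by a more direct Fourier multiplier argument that keeps the constants under control uniformly in $\mu,\nu$.

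For part (i), recall that $(I-\Delta)^{\mu/2}$ is an isometry from $H_p^\mu(\T)$ onto $L^p(\T)$. Writing
\[
\|u\|_{\nu,p}=\|(I-\Delta)^{(\nu-\mu)/2}(I-\Delta)^{\mu/2}u\|_{p},
\]
matters reduce to showing that $(I-\Delta)^{(\nu-\mu)/2}$ is a bounded operator on $L^p(\T)$. Since $\nu-\mu\le 0$, its Fourier symbol $(1+4\pi^2|k|^2)^{(\nu-\mu)/2}$ on $\Z^d$ is a bounded, smoothly decaying sequence, hence it defines a bounded $L^p(\T)$ multiplier (either by a Mihlin-type theorem on the torus, or by comparison with the Bessel convolution kernel of positive order, which is integrable on $\R^d$ and periodizes to a bounded kernel on $\T$). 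This yields $\|u\|_{\nu,p}\le C\|u\|_{\mu,p}$.

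For parts (ii) and (iii), I would identify $u\in H_p^\mu(\T)$ with its periodic extension, fix a cutoff $\chi\in C_c^\infty(\R^d)$ with $\chi\equiv 1$ on $[0,1]^d$ and $\mathrm{supp}\,\chi\subset[-1,2]^d$, and work with $\chi u\in H_p^\mu(\R^d)$. The $\R^d$-analogues of (ii) and (iii) (Sobolev embeddings for Bessel potential spaces, available in \cite[Section 2.8.1]{trbookinterpolation}) then give $\chi u\in H_q^\nu(\R^d)$ or $\chi u\in C^{\mu-d/p}(\R^d)$, and restricting back to $[0,1]^d$ transfers the bound to $\T$. This is exactly the same transfer mechanism used for the Slobodeckij scale in Lemma \ref{inclstatW}.

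The main obstacle is that the cutoff multiplication $u\mapsto \chi u$ must be shown to be bounded on $H_p^\mu(\R^d)$ for real $\mu$ that may be negative or non-integer. For $\mu\ge 0$ this is a standard fact, following either from the Littlewood-Paley/paraproduct decomposition or from the pseudodifferential calculus (multiplication by a Schwartz function preserves $H_p^\mu$). For $\mu<0$ one recovers the estimate by duality from the positive-order case, using that $(H_p^{\mu}(\R^d))^*=H_{p'}^{-\mu}(\R^d)$. Once this multiplier-type bound is in hand, the rest is routine bookkeeping, and one obtains exactly the statements (i)--(iii) of the lemma, in full analogy with \cite[Lemma 2.5]{CG1}.
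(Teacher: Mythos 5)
Your proposal is correct and follows essentially the same route as the paper, which simply defers to the transfer-to-the-torus argument of \cite[Lemma 2.5]{CG1}: reduce to the classical Bessel-potential embeddings on $\R^d$ from \cite[Section 2.8.1]{trbookinterpolation} via periodization and a cutoff, with part (i) obtainable directly from the boundedness of the negative-order multiplier $(I-\Delta)^{(\nu-\mu)/2}$ on $L^p(\T)$. Your explicit treatment of the pointwise-multiplier bound for $\chi$ on $H_p^\mu(\R^d)$ (paraproducts for $\mu\ge 0$, duality for $\mu<0$) is exactly the technical content hidden in that citation.
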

\begin{rem}\label{vector-valued}
Similar embeddings continue to hold for the vector-valued spaces $W_p^\beta(I;X)$ and $H_p^\beta(I;X)$, being mainly based on interpolation theory arguments. We refer the reader to \cite[Proposition 2.10]{MS} and references therein for further details.
\end{rem}
\begin{rem}\label{ExTime}
By standard extension arguments with respect to the time variable and multiplication by cut-off functions one, can extend functions on $\Vb_p^2(Q_T)$ to $\Vb_p^2(\T\times\R^+)$ and apply the related embedding results for the half-line case (see e.g. \cite[Lemma 2.5]{MS})
\end{rem}
We can now state a Sobolev embedding theorem for the parabolic space $\Vb_p^2$.
\begin{thm}\label{holder}
	Let $p>1$, $u\in \Vb_p^{2}(Q_T)$ and $u(0)\in W^{2-\frac{2}{p\beta},p}(\T)$. If $\alpha$ is such that
	\[
	\frac{1}{p\beta}<\alpha<1,
	\]
	then $\Vb_p^{2}(Q)$ is continuously embedded onto $C^{\alpha\beta-\frac1p}([0,T];H_p^{2-2\alpha}(\T))$ and there exists a constant $C>0$ such that
	\[
	\|u\|_{C^{\alpha\beta-\frac1p}([0,T];H_p^{2-2\alpha}(\T))}\leq C(\|u\|_{\Vb_p^{2}(Q_T)}+\|u(0)\|_{W^{2-\frac{2}{p\beta},p}(\T)}).
	\]
	Note that the constant $C=C(d,p,\alpha,\beta,T)$ remains bounded for bounded values of $T$.
\end{thm}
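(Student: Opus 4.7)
The strategy is to view $\Vb_p^2$ as an intersection of two anisotropic Bessel spaces, interpolate between them to trade time regularity for a loss of space regularity, and finally apply a one-dimensional Sobolev-in-time embedding. The initial trace condition $u(0)\in W^{2-2/(p\beta),p}(\T)$ enters only to allow us to extend $u$ off $[0,T]$ without losing the intersection structure.

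\textbf{Step 1: Reduction to the half-line.} Using Remark \ref{ExTime}, I first extend $u$ from $Q_T=\T\times(0,T)$ to $\T\times\R^+$ by an operator $E$ that is continuous from $\Vb_p^2(Q_T)$ (with initial trace in $W^{2-2/(p\beta),p}(\T)$) to $\Vb_p^2(\T\times\R^+)$, with
\[
\|Eu\|_{\Vb_p^2(\T\times\R^+)}\le C\bigl(\|u\|_{\Vb_p^2(Q_T)}+\|u(0)\|_{W^{2-2/(p\beta),p}(\T)}\bigr).
\]
The trace hypothesis is precisely what makes such an extension well-defined (cf.\ the discussion in Section \ref{sec;parab_sob} on initial traces for $\Vb_p^2$). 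All subsequent computations are carried out on $\R^+$, and a restriction at the end returns them to $[0,T]$; keeping track of constants shows the dependence $C=C(d,p,\alpha,\beta,T)$ remains bounded for bounded $T$.

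\textbf{Step 2: Anisotropic interpolation (mixed derivative theorem).} The core step is the assertion
\[
H_p^{\beta}(\R^+;L^p(\T))\cap L^p(\R^+;H_p^{2}(\T))\;\hookrightarrow\;H_p^{\alpha\beta}(\R^+;H_p^{2(1-\alpha)}(\T)),\qquad \alpha\in[0,1],
\]
with norm control. This is a standard mixed derivative/interpolation statement for vector-valued Bessel potential spaces: complex interpolation commutes with the Bessel scale in the target variable (Section \ref{susb}), and the vector-valued Bessel spaces interpolate in the exponent as in \eqref{besselone}. Concretely, one writes the intersection as
\[
u\in\bigl[L^p(\R^+;H_p^{2}(\T)),\,H_p^{\beta}(\R^+;H_p^{2}(\T))\bigr]_{\alpha}\cap\bigl[H_p^{\beta}(\R^+;L^p(\T)),\,H_p^{\beta}(\R^+;H_p^{2}(\T))\bigr]_{1-\alpha}
\]
(both sides are interpreted via standard lifting properties of $(I-\Delta)^{\mu/2}$ recalled in Section \ref{susb}), and concludes that $u$ belongs to the desired anisotropic space with the combined norm bounded by $\|u\|_{\Vb_p^2}$. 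The lifting isomorphism $(I-\Delta)^{(1-\alpha)}$ reduces the space estimate to an $L^p$-in-space statement. Since $W^{2,p}(\T)=H_p^{2}(\T)$ for $p\in(1,\infty)$, we obtain
\[
\|u\|_{H_p^{\alpha\beta}(\R^+;H_p^{2-2\alpha}(\T))}\le C\,\|u\|_{\Vb_p^2(\T\times\R^+)}.
\]

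\textbf{Step 3: Sobolev embedding in time.} Applied to the Banach space $X=H_p^{2-2\alpha}(\T)$, the one-dimensional embedding
\[
H_p^{\sigma}(\R^+;X)\;\hookrightarrow\;C^{\sigma-1/p}(\R^+;X),\qquad \sigma>1/p,
\]
with $\sigma=\alpha\beta$, is available by Remark \ref{vector-valued}. The hypothesis $1/(p\beta)<\alpha$ is exactly $\alpha\beta>1/p$, so the embedding applies and delivers the H\"older-in-time modulus $\alpha\beta-1/p$ with values in $H_p^{2-2\alpha}(\T)$.

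\textbf{Main obstacle.} The genuinely delicate point is Step 2: the mixed derivative identification for fractional $\beta$. For $\beta=1$ this is a classical fact about parabolic Sobolev spaces, but in the Caputo setting one must be careful that complex interpolation between $H_p^{\beta}(\R^+;L^p)$ and $L^p(\R^+;H_p^{2})$ lands in the correct anisotropic space; this ultimately rests on the $L^p$-boundedness of the operators $(I-\Delta)^{\mu/2}$ and of the Bessel-type time potentials, together with the compatibility of vector-valued complex interpolation with the Bessel scale. Combining Steps 1--3 then gives
\[
\|u\|_{C^{\alpha\beta-1/p}([0,T];H_p^{2-2\alpha}(\T))}\le C\bigl(\|u\|_{\Vb_p^2(Q_T)}+\|u(0)\|_{W^{2-2/(p\beta),p}(\T)}\bigr),
\]
which is the claim.
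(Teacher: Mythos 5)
Your proposal is correct and follows essentially the same route as the paper: extension to the half-line (Remark \ref{ExTime}), the mixed derivative theorem giving $\Vb_p^{2}\hookrightarrow H_p^{\alpha\beta}(H_p^{2-2\alpha})$ (the paper's $r=\alpha\beta$ in $H_p^{r}(H_p^{2-2r/\beta})$, citing Pr\"uss--Simonett), and the vector-valued Sobolev embedding in time from Remark \ref{vector-valued}. The only cosmetic difference is that you absorb the initial trace into the extension operator, whereas the paper first treats $u(0)=0$ and then subtracts a lift $\bar u$ of the initial datum with $\|\bar{u}\|_{\Vb_p^2(\T\times\R^+)}\leq C\|u(0)\|_{W^{2-\frac{2}{p\beta},p}(\T)}$; the two devices are equivalent.
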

\begin{proof}
	Let first $u(0)=0$. This fact is basically a consequence of the mixed derivative theorem (see \cite[Corollary 4.5.10]{PrussSimonett} and \cite[Proposition 2.3.2 and Chapter 3]{TesiZacher} for further discussions) after using Remark \ref{ExTime}, which allows to obtain for $r\in[0,1]$ the embedding
	\[
	\Vb_p^{2}(Q_T)\hookrightarrow 
	H_p^{r}(0,T;H_p^{2-2r/\beta}(\T)).
	\]
	We take $r:=\alpha\beta\in (0,1)$ and this gives
	\[
	\Vb_p^{2}(Q_T)\hookrightarrow H_p^{\alpha\beta}(0,T;H_p^{2-2\alpha}(\T)).
	\]
	By using Remark \ref{vector-valued}, we get the embedding
	\[
	H_p^{\alpha\beta}(0,T;H_p^{2-2\alpha}(\T))\hookrightarrow C^{\alpha\beta-1/p}([0,T];H_p^{2-2\alpha}(\T)).
	\]
	
	To show the fact that the embedding constant can be bounded independently of $T>0$ by adding $\|u(0)\|_{W^{2-\frac{2}{p\beta},p}(\T)}$ one can argue as follows. After extending $u$ to $\hat u$ on $\R^+$ as pointed out in Remark \ref{ExTime}, it suffices to subtract a function $\bar{u}\in \Vb_p^2(\T\times\R^+)$ such that $\bar{u}(0)=u(0)$ and $\|\bar{u}\|_{\Vb_p^2(\T\times\R^+)}\leq C\|u(0)\|_{W^{2-\frac{2}{p\beta},p}(\T)}$. Then one concludes by applying the results for null initial traces to $u-\bar{u}$.

\end{proof}

We will need the following embedding onto H\"older classes.
\begin{prop}\label{emb}
	The space $\Vb_p^2(Q_T)$ is continuously embedded onto $C([0,T];W^{2-\frac{2}{p\beta},p}(\T))$. Moreover, the space $C([0,T];W^{1-\frac{2}{p\beta},p}(\T))$ is continuously embedded onto $C([0,T];C^{\gamma/\beta}(\T))$ for some $\gamma\in(0,1)$ when $p>d+2/\beta$.
\end{prop}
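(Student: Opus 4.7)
The plan is to prove the two embeddings separately. For the first, I would invoke the trace theory for the parabolic space $\Vb_p^2(Q_T)$, which has already been identified in Section \ref{sec;parab_sob} via the resolvent/subordination argument of \cite[Proposition 4.5.14]{PrussSimonett}: the initial-trace space of $\Vb_p^2$ is exactly $W^{2-\frac{2}{p\beta},p}(\T)$ for $\beta>1/p$. By the same real-interpolation characterization, the trace at any time $t\in[0,T]$ belongs to $W^{2-\frac{2}{p\beta},p}(\T)$ with a uniform bound $\sup_{t\in[0,T]}\|u(\cdot,t)\|_{W^{2-\frac{2}{p\beta},p}(\T)}\leq C\|u\|_{\Vb_p^2(Q_T)}$. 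Continuity of $t\mapsto u(\cdot,t)$ in $W^{2-\frac{2}{p\beta},p}(\T)$ then follows either by density of smooth functions in $\Vb_p^2$ (for which continuity is clear) combined with the uniform trace bound, or equivalently from the general fact that $H^\beta_p(I;L^p(\T))\cap L^p(I;W^{2,p}(\T))$ embeds continuously into $BUC([0,T];(L^p(\T),W^{2,p}(\T))_{1-\frac{1}{p\beta},p})$.

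For the second embedding, the statement is a pointwise-in-time application of the stationary Sobolev embedding from Lemma \ref{inclstatW}(ii). I would set $\mu=1-\frac{2}{p\beta}$ and check the hypotheses: since $p>d+2/\beta$, one has $p\mu=p-\frac{2}{\beta}>d$, so the embedding $W^{\mu,p}(\T)\hookrightarrow C^{\mu-d/p}(\T)$ applies. Define
\[
\gamma:=\beta\Bigl(\mu-\frac{d}{p}\Bigr)=\beta-\frac{\beta d+2}{p},
\]
so that the Hölder exponent is exactly $\gamma/\beta=1-\frac{2}{p\beta}-\frac{d}{p}$. The hypothesis $p>d+2/\beta$ gives $\gamma>0$, and trivially $\gamma<\beta<1$, hence $\gamma/\beta\in(0,1)$, so in particular it is not an integer and Lemma \ref{inclstatW}(ii) is indeed applicable. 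Taking the supremum in $t\in[0,T]$ of the pointwise bound $\|u(\cdot,t)\|_{C^{\gamma/\beta}(\T)}\leq C\|u(\cdot,t)\|_{W^{\mu,p}(\T)}$ yields the desired continuous embedding $C([0,T];W^{1-\frac{2}{p\beta},p}(\T))\hookrightarrow C([0,T];C^{\gamma/\beta}(\T))$.

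The main subtlety lies in the first embedding: Theorem \ref{holder} only delivers $\Vb_p^2(Q_T)\hookrightarrow C^{\alpha\beta-\frac{1}{p}}([0,T];H_p^{2-2\alpha}(\T))$ for $\alpha>\frac{1}{p\beta}$, and one cannot simply let $\alpha\to\frac{1}{p\beta}^{+}$ because the time-Hölder exponent degenerates to zero and the constants blow up. Thus one must invoke the sharper trace theory (rather than the interpolation embedding of Theorem \ref{holder}) to reach the critical spatial trace space $W^{2-\frac{2}{p\beta},p}(\T)$ with continuity in $t$; this is exactly the content of the Prüss--Simonett identification of initial traces for time-fractional problems that was already recalled. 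By contrast, the second embedding is essentially an exercise in tracking indices.
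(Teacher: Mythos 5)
Your proposal is correct and follows essentially the same route as the paper: the first embedding is exactly the content of the Pr\"uss--Simonett trace theory (the paper simply cites \cite[Theorem 4.5.15]{PrussSimonett}, whose proof is the argument you sketch via the identification $W^{2-\frac{2}{p\beta},p}(\T)\simeq (L^p(\T),W^{2,p}(\T))_{1-\frac{1}{p\beta},p}$ and continuity of the trace in time), and the second embedding is the same pointwise-in-time application of Lemma \ref{inclstatW}(ii) with the same exponent $\gamma=\beta-\frac{2}{p}-\frac{d\beta}{p}$. Your remark that Theorem \ref{holder} degenerates as $\alpha\to\frac{1}{p\beta}^{+}$ and therefore cannot replace the sharper trace result is accurate and consistent with why the paper invokes the external reference at this point.
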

\begin{proof}
	The fact that
	\[
	\Vb_p^{2}(Q_T)\hookrightarrow C([0,T];W^{2-\frac{2}{p\beta},p}(\T))
	\]
	is a proven in \cite[Theorem 4.5.15]{PrussSimonett}. As for the second assertion, by exploiting classical embedding theorems for fractional Sobolev spaces (see Lemma \ref{inclstatW}-(ii)), we get that
	\[
	C([0,T];W^{1-\frac{2}{p\beta},p}(\T))\hookrightarrow C([0,T];C^{\frac{\gamma}{\beta}}(\T)),
	\]
	whenever
	\[
	\left(1-\frac{2}{p\beta}\right)p>d,
	\]
	namely  for $p>d+2/\beta$ and   $\gamma=\beta-2/p-d\beta/p\in(0,1)$.
\end{proof}

\begin{prop}\label{lebesgue}
	Let $q\geq p>1$, $\theta\in\R$ such that
	\[
	\eta<2+\frac{d}{q}-\frac{d+\frac{2}{\beta}(1-\theta)}{p}.
	\]
	Then, for any $u\in\Vb_p^{2}(Q_T)$, we have
	\[
	\left(\int_0^T\|u(\cdot,t)\|_{\eta,q}^{\frac{p}{\theta}}dt\right)^{\theta}\leq C(\|u\|_{\Vb_p^{2}(Q_T)}^p+\|u(0)\|_{W^{2-\frac{2}{p\beta},p}(\T)}^p).
	\]
	Moreover, if $1<p<d+\frac{2}{\beta}$ and $\frac1q>\frac1p-\frac{2}{d+\frac{2}{\beta}}$
	\[
	\|u\|_{L^q(Q_T)}\leq C(\|u\|_{\Vb_p^{2}(Q_T)}+\|u(0)\|_{W^{2-\frac{2}{p\beta},p}(\T)}).
	\]
\end{prop}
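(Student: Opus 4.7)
The plan is to derive the first estimate by interpolating the time and space regularity in $\Vb_p^2$ via the mixed derivative theorem, then applying a one-dimensional vector-valued Sobolev embedding in time, followed by a static Sobolev embedding in space, choosing the interpolation parameter so that the exponents match the hypothesis; the second estimate will then be an immediate specialization.

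Concretely, after extending $u$ to $\R^+$ via Remark~\ref{ExTime} and subtracting a lift of the initial data exactly as in the proof of Theorem~\ref{holder}, the mixed derivative theorem yields, in the admissible range of $r$,
\[
\Vb_p^2(Q_T)\hookrightarrow H_p^{r}\bigl(0,T;\,H_p^{2-\frac{2r}{\beta}}(\T)\bigr),
\]
with constant uniform in $T$. I would then set $r := (1-\theta)/p$, so that $rp = 1-\theta < 1$, apply the vector-valued one-dimensional Slobodeckij embedding $H_p^r(0,T;X)\hookrightarrow L^{p/(1-rp)}(0,T;X)=L^{p/\theta}(0,T;X)$ of Remark~\ref{vector-valued}, and finally invoke Lemma~\ref{inclstatH}(i),(iii) in space to get $H_p^{2-\frac{2(1-\theta)}{p\beta}}(\T)\hookrightarrow H_q^\eta(\T)$ precisely under the (strict form of the) condition
\[
\eta < 2 + \frac{d}{q} - \frac{d + \frac{2}{\beta}(1-\theta)}{p}.
\]
Concatenating the three embeddings produces the first inequality.

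The second estimate follows by specializing to $\eta = 0$ and $\theta = p/q$, which turns the left-hand side into $\|u\|_{L^q(Q_T)}^p$; substituting these values into the threshold above and using the identity $\frac{2}{p\beta}\bigl(1-\frac{p}{q}\bigr)=\frac{2}{\beta}\bigl(\frac1p-\frac1q\bigr)$, the exponent condition collapses to $\bigl(\tfrac{1}{p}-\tfrac{1}{q}\bigr)\bigl(d+\tfrac{2}{\beta}\bigr)<2$, i.e.\ $\tfrac1q > \tfrac1p - \tfrac{2}{d+2/\beta}$, while the assumption $1<p<d+\tfrac{2}{\beta}$ ensures $\theta\in(0,1)$. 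The main obstacle is bookkeeping rather than conceptual: one must verify that the parameter $r=(1-\theta)/p$ lies in the admissible range of the mixed derivative theorem for every $\theta$ allowed by the hypothesis (which may force the mild restriction $\theta>1-p\beta$ when $p\beta<1$) and that the embedding constant is uniform in $T$ thanks to the standard initial lift from Theorem~\ref{holder}; the strict inequality on $\eta$ provides the safety margin that makes any critical Sobolev endpoint unnecessary.
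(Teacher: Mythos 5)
Your argument is correct, but it takes a genuinely different route from the paper's. The paper fixes $\alpha>1/(p\beta)$, writes $H_p^\nu$ with $\nu=(2-2\alpha)(1-\theta)+2\theta$ as an interpolation space between $H_p^{2-2\alpha}$ and $H_p^2$, and uses the pointwise-in-time interpolation inequality $\|u(t)\|_{\nu}\leq\|u(t)\|_{2-2\alpha,p}^{1-\theta}\|u(t)\|_{2,p}^{\theta}$ together with the uniform-in-time bound $\sup_t\|u(t)\|_{2-2\alpha,p}\lesssim\|u\|_{\Vb_p^2}+\|u(0)\|_{W^{2-2/(p\beta),p}}$ from Theorem~\ref{holder}; the first estimate then follows from H\"older and Young, and the spatial Sobolev embedding of Lemma~\ref{inclstatH}(iii) produces the same exponent threshold. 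You instead apply the mixed derivative theorem directly at the intermediate temporal regularity $r=(1-\theta)/p$ and then use the one-dimensional vector-valued Sobolev embedding $H_p^r(0,T;X)\hookrightarrow L^{p/(1-rp)}(0,T;X)$ before embedding in space. Both routes rest ultimately on the mixed derivative theorem (the paper uses it inside Theorem~\ref{holder}), and your exponent bookkeeping checks out, including the specialization $\eta=0$, $\theta=p/q$ for the second estimate. What the paper's version buys is that it only needs the $L^\infty$-in-time endpoint already established in Theorem~\ref{holder} plus elementary H\"older/Young, avoiding any appeal to a temporal Sobolev embedding; what yours buys is a shorter chain of embeddings with no interpolation inequality. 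Two small remarks: the restriction $\theta>1-p\beta$ you flag is vacuous here, since the presence of the trace norm $\|u(0)\|_{W^{2-2/(p\beta),p}}$ presupposes $\beta>1/p$ throughout; and the hypothesis $1<p<d+2/\beta$ does not serve to ensure $\theta=p/q\in(0,1)$ (that follows from $q\geq p$) but rather to keep the threshold $\frac1p-\frac{2}{d+2/\beta}$ in the subcritical regime.
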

\begin{proof}
	Let $\nu=(2-2\alpha)(1-\theta)+2\theta$, $\alpha>1/(p\beta)$. Then, we note that $H_p^\nu$ can be obtained as real interpolation between $H_p^2$ and $H_p^{2-2\alpha}$. Moreover, $H_p^\nu$ is continuously embedded onto $H_q^{\nu+d/q-d/p}$ in view of Lemma \ref{inclstatH}-(iii).
	Hence, for a.e. $t$, we have
	\[
	C(d,p,s,\beta,\alpha)\|u(t)\|_{\nu+d/q-d/p}\leq \|u(t)\|_{\nu}\leq \|u(t)\|_{2-2\alpha,p}^{1-\theta}\|u(t)\|_{2,p}^\theta\ .
	\]
	Owing to the inequality $\alpha>\frac{1}{\beta p}$ we then obtain
	\begin{multline*}
	\eta\leq \nu-\frac{d}{p}+\frac{d}{q}=(2-2\alpha)(1-\theta)+2\theta-\frac{d}{p}+\frac{d}{q}\\
	=2-2\alpha(1-\theta)-\frac{d}{p}+\frac{d}{q}<2+\frac{d}{q}-\frac{d+\frac{2}{\beta}(1-\theta)}{p}.
	\end{multline*}
	Denote by $\mathbb{H}_p^2(Q_T)$   the space $L^p(0,T;H_p^2(\T))$. Then
	\begin{equation}\label{embeddingsottocrit}
	\begin{split}
	\left(\int_0^T\|u(\cdot,t)\|_{\eta,q}^{\frac{p}{\theta}}dt\right)^{\theta}&\leq\left(\int_0^T\|u(t)\|_{2-2\alpha,p}^{(1-\theta)\frac{p}{\theta}}\|u(t)\|_{\mu,p}^{p}dt\right)^\theta\\
	&\leq C\sup_{t\leq T}\|u(t)\|_{2-2\alpha,p}^{(1-\theta)p}\left(\int_0^T\|u(t)\|_{2,p}^{p}dt\right)^\theta\\
	&\leq C(\|u\|_{\Vb_p^{2}(Q_T)}+\|u(0)\|_{W^{2-\frac{2}{p\beta},p}(\T)})^{(1-\theta)p}\|u(t)\|_{\mathbb{H}_p^{2}(Q_T)}^{\theta p}\\
	&\leq C(\|u\|_{\Vb_p^{2}(Q_T)}+\|u(0)\|_{W^{2-\frac{2}{p\beta},p}(\T)})^p,	
	\end{split}
	\end{equation}
	where in the last inequality we used Theorem \ref{holder} and Young's inequality. The last statement can be obtained by setting $\eta=0$ and $\theta=\frac{p}{q}$. 
\end{proof}
\begin{rem}
	We can actually reach the the threshold \[
	\frac1q=\frac1p-\frac{2}{d+\frac{2}{\beta}}\]
	using a maximal regularity result. This is accomplished by using the embedding presented in Proposition \ref{emb}, namely  
	\[
	\Vb_p^{2}(Q_T)\hookrightarrow C([0,T];W^{2-\frac{2}{p\beta},p}(\T)),
	\]
	instead of Theorem \ref{holder},  writing $H_p^\nu$   as the real interpolation between $H_p^{2-2/(p\beta)}$ and $H_p^2$ via the very same procedure.\\
	 We  also remark that the above embedding theorems can be compared (and are consistent) with the classical ones for the spaces $W^{2,1}_p$ (i.e. corresponding to the case $\beta=1$) presented in \cite[Lemma II.3.3]{LSU} (see \cite{GopalaRaoTAMS,Bagby} for a proof).
\end{rem}
We conclude the embedding  results by providing a Sobolev embedding theorem onto the parabolic H\"older spaces introduced in Section \ref{subsec:holder} (compare with the classical embeddings for the space $W^{2,1}_p$ onto H\"older's classes in \cite[Lemma II.3.3]{LSU}).
\begin{cor} 
	Let $p>\frac{d}{2}+\frac{1}{\beta}$. Then $\Vb_p^2(Q_T)$ is continuously embedded onto $\mathcal{C}^{\frac{\gamma}{\beta},\frac{\gamma}{2}}(Q_T)$ for
	\[
	\gamma=\beta-\frac1p-\frac{d\beta}{2p}\in(0,1).
	\]
	\end{cor}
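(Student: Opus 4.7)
The goal is to bound the two components of the parabolic Hölder seminorm, namely $[u]_{C^{\gamma/\beta}_x(Q_T)}$ and $[u]_{C^{\gamma/2}_t(Q_T)}$, each by $\|u\|_{\Vb_p^2(Q_T)}$. The plan is to derive them separately from the two embeddings already recorded in this section: Proposition \ref{emb} furnishes the spatial half and Theorem \ref{holder} the temporal half.

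For the spatial Hölder bound, I would start from the embedding $\Vb_p^2(Q_T)\hookrightarrow C([0,T];W^{2-2/(p\beta),p}(\T))$ given by Proposition \ref{emb}. The hypothesis $p>d/2+1/\beta$ is precisely what guarantees
\[
\left(2-\frac{2}{p\beta}\right)-\frac{d}{p}=\frac{2\gamma}{\beta}>\frac{\gamma}{\beta}>0,
\]
so, using Lemma \ref{inclstatW}(i)--(ii) (passing through the intermediate Sobolev space $W^{\gamma/\beta+d/p,p}(\T)$ to ensure the Hölder index $\gamma/\beta\in(0,1)$ is non-integer), one obtains $W^{2-2/(p\beta),p}(\T)\hookrightarrow C^{\gamma/\beta}(\T)$. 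This immediately produces $\sup_{t\in[0,T]}[u(\cdot,t)]_{C^{\gamma/\beta}(\T)}\leq C\|u\|_{\Vb_p^2(Q_T)}$.

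For the temporal Hölder bound, the idea is to calibrate $\alpha$ in Theorem \ref{holder} so that the output time-exponent is exactly $\gamma/2$ and the output spatial Bessel space still embeds into $L^\infty(\T)$. A direct computation shows that the choice
\[
\alpha=\frac{1}{2}+\frac{1}{2p\beta}-\frac{d}{4p}
\]
gives $\alpha\beta-1/p=\gamma/2$, and the three conditions $\alpha>1/(p\beta)$, $\alpha<1$, and $(2-2\alpha)p>d$ all reduce algebraically to the standing hypothesis $p>d/2+1/\beta$. Theorem \ref{holder} then yields $\Vb_p^2(Q_T)\hookrightarrow C^{\gamma/2}([0,T];H_p^{2-2\alpha}(\T))$ (with the control of $u(0)$ in $W^{2-2/(p\beta),p}(\T)$ being automatic by Proposition \ref{emb}), while Lemma \ref{inclstatH}(ii) provides $H_p^{2-2\alpha}(\T)\hookrightarrow L^\infty(\T)$, hence $\sup_{x\in\T}[u(x,\cdot)]_{C^{\gamma/2}(0,T)}\leq C\|u\|_{\Vb_p^2(Q_T)}$. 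Combined with the uniform $L^\infty(Q_T)$-bound already contained in Proposition \ref{emb}, this delivers the claimed embedding into $\mathcal{C}^{\gamma/\beta,\gamma/2}(Q_T)$. The only substantive obstacle I anticipate is the bookkeeping check that every parameter constraint collapses onto the single threshold $p>d/2+1/\beta$, which is also what makes the exponents $\gamma/\beta$ and $\gamma/2$ sharp at the level of these embeddings.
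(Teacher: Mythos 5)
Your argument is correct and is essentially the paper's proof: the key step in both is to apply Theorem \ref{holder} with $\alpha$ calibrated by $\alpha\beta-\tfrac1p=\tfrac{\gamma}{2}$ (so $\alpha=\tfrac12+\tfrac{1}{2p\beta}-\tfrac{d}{4p}$) and to check that all constraints collapse to $p>\tfrac d2+\tfrac1\beta$. The only difference is that your separate spatial step via Proposition \ref{emb} is redundant: with this same $\alpha$ one has $2-2\alpha-\tfrac dp=\tfrac{\gamma}{\beta}$ exactly, so Lemma \ref{inclstatH}-(ii) gives $H_p^{2-2\alpha}(\T)\hookrightarrow C^{\gamma/\beta}(\T)$ rather than merely $L^\infty(\T)$, and the single embedding $\Vb_p^2(Q_T)\hookrightarrow C^{\gamma/2}([0,T];C^{\gamma/\beta}(\T))$ already controls both halves of the parabolic seminorm.
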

\begin{proof}
This is a consequence of  Theorem \ref{holder} by taking 
\[
\alpha\beta-\frac1p=\frac{\gamma}{2}
\]
and exploiting the embedding, see   Lemma \ref{inclstatH}-(ii), of $H_p^{2-2\alpha}$ onto $C^{2-2\alpha-d/p}$. By the above choice of $\gamma$, one immediately checks that
\[
2-2\alpha-\frac{d}{p}=\frac{\gamma}{\beta}.
\]
\end{proof}
\begin{rem}\label{spacetimeholder}
Compare the above result with \cite[Corollary 7.19]{DK}. Similar strategies to those in Theorem \ref{holder} can be implemented to show that the space $H_p^{\beta/2}(L^p)\cap L^p(W^{1,p})$ is embedded onto $C^\frac{\delta}{2}([0,T];C^\frac{\delta}{\beta}(\T))\subset\mathcal{C}^{\frac{\delta}{\beta},\frac{\delta}{2}}(Q_T)$ for $\delta=\beta/2-1/p-d\beta/(2p)$ when $p>d+2/\beta$. This will be useful to study regularity in space-time H\"older spaces of quasilinear problems with time-fractional Caputo derivative via a linearization procedure,  see Remark \ref{stclassical} below.
\end{rem}
The arguments previously discussed can be easily extended to yield  parabolic embedding results for space-time fractional spaces associated to the operator \eqref{space_time_operator}.
We state the results without giving the proof, being similar to the above case. See \cite[Section 2]{CG1} to compare the embeddings with those corresponding to the case $\beta=1$ and $s\in(0,1)$.
\begin{thm}\label{holder_fractional}
	Let $p>1$, $u\in \Vb_p^{2s}(Q_T)$ and $u(0)\in W^{2s-\frac{2s}{p\beta},p}(\T)$. If $\alpha$ is such that
	\[
	\frac{s}{p\beta}<\alpha<1,
	\]
	then $\Vb_p^{2s}(Q_T)$ is continuously embedded onto $C^{\frac{\alpha\beta}{s}-\frac1p}([0,T];H_p^{2s-2\alpha}(\T))$ and there exists a constant $C>0$ such that
	\[
	\|u\|_{C^{\frac{\alpha\beta}{s}}([0,T];H_p^{2s-2\alpha}(\T))}\leq C(\|u\|_{\Vb_p^{2s}(Q_T)}+\|u(0)\|_{W^{2s-\frac{2s}{p\beta},p}(\T)}).
	\]
	Note that the constant $C=C(d,p,\alpha,\beta,s,T)$ remains bounded for bounded values of $T$.
\end{thm}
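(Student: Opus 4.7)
My plan is to mimic closely the proof of Theorem \ref{holder}, replacing the exponent $2$ by $2s$ throughout and adjusting the interpolation parameters accordingly.

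First I would reduce to the case of zero initial trace $u(0)=0$. Given general $u\in\Vb_p^{2s}(Q_T)$, by the characterization of initial traces recalled in Section \ref{sec;parab_sob} (see \cite[Proposition 4.5.14]{PrussSimonett}), one can construct a ``lifting" $\bar u\in\Vb_p^{2s}(\T\times\R^+)$ with $\bar u(0)=u(0)$ and
\[
\|\bar u\|_{\Vb_p^{2s}(\T\times\R^+)}\leq C\|u(0)\|_{W^{2s-\frac{2s}{p\beta},p}(\T)}.
\]
Applying the statement to $u-\bar u$ (which has null initial trace) and using Remark \ref{ExTime} to extend $u$ to $\T\times\R^+$ yields the general case with the correct dependence on $\|u(0)\|_{W^{2s-2s/(p\beta),p}(\T)}$.

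Second, for the zero-trace case, I would invoke the mixed derivative theorem (\cite[Corollary 4.5.10]{PrussSimonett}, applied with $A=(I-\Delta)^s$ in place of $(I-\Delta)$), which interpolates between the two intersected spaces defining $\Vb_p^{2s}$. Writing the interpolation parameter as $\theta\in[0,1]$, one obtains
\[
\Vb_p^{2s}(Q_T)\hookrightarrow H_p^{\beta\theta}\bigl(0,T;H_p^{2s(1-\theta)}(\T)\bigr).
\]
Choosing $\theta=\alpha/s$ (admissible since $\alpha<s$, or more precisely since $\alpha<1$ and one may work in the relevant range, and $\theta>1/(p\beta)$ is exactly the hypothesis $\alpha>s/(p\beta)$) gives
\[
\Vb_p^{2s}(Q_T)\hookrightarrow H_p^{\alpha\beta/s}\bigl(0,T;H_p^{2s-2\alpha}(\T)\bigr).
\]
Then, by the vector-valued Sobolev embedding (Remark \ref{vector-valued}), since $\alpha\beta/s>1/p$,
\[
H_p^{\alpha\beta/s}\bigl(0,T;H_p^{2s-2\alpha}(\T)\bigr)\hookrightarrow C^{\alpha\beta/s-1/p}\bigl([0,T];H_p^{2s-2\alpha}(\T)\bigr),
\]
which chains into the desired embedding.

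The only delicate point, which is the same as in Theorem \ref{holder}, is justifying that the embedding constant remains controlled for bounded $T$ once the initial trace term is added. This is handled by the subtraction argument described above, since all embeddings used on the half-line $\R^+$ are $T$-independent by scaling in the extension step (Remark \ref{ExTime}). I expect the main technical obstacle to be checking that the mixed derivative theorem applies with the correct scaling when the spatial operator is $(I-\Delta)^s$ rather than $I-\Delta$; this amounts to verifying that $(I-\Delta)^s$ has bounded imaginary powers on $L^p(\T)$ with appropriate type, which follows from classical functional calculus arguments. Once this is in place, the remainder of the argument is a direct transcription of the proof of Theorem \ref{holder}.
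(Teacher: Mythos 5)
Your proposal is correct and follows exactly the route the paper intends: the paper states Theorem \ref{holder_fractional} without proof, remarking only that the argument is ``similar to the above case'', i.e.\ to Theorem \ref{holder}, and your transcription (lifting of the initial trace, mixed derivative theorem with $(I-\Delta)^s$, the choice $\theta=\alpha/s$, then the vector-valued Sobolev embedding) is precisely that adaptation. Your parenthetical caveat that $\theta=\alpha/s\in(0,1)$ forces $\alpha< s$ is well taken --- the upper bound $\alpha<1$ in the statement should really read $\alpha<s$ to be consistent with the $s=1$ case of Theorem \ref{holder} --- but that is an issue with the statement rather than with your argument.
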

\begin{prop}\label{lebesgue2}
Let $q\geq p>1$, $\theta$, $s\in(0,1)$ such that
\[
\eta<\mu+\frac{d}{q}-\frac{d+\frac{2s}{\beta}(1-\theta)}{p}.
\]
Then, for any $u\in\Vb_p^{2s}(Q_T)$ we have
\[
\left(\int_0^T\|u(\cdot,t)\|_{\eta,q}^{\frac{p}{\theta}}dt\right)^{\theta}\leq C(\|u\|_{\Vb_p^{2s}(Q_T)}^p+\|u(0)\|_{W^{2s-\frac{2s}{p\beta},p}(\T)}^p).
\]
Moreover, if $\mu>0$ and $1<p<d+\frac{2s}{\beta}$ and $\frac1q>\frac1p-\frac{2s}{d+\frac{2s}{\beta}}$
\[
\|u\|_{L^q(Q_T)}\leq C(\|u\|_{\Vb_p^{2s}(Q_T)}+\|u(0)\|_{W^{2s-\frac{2s}{p\beta},p}(\T)}).
\]
\end{prop}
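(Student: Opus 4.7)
The plan is to mimic step-by-step the proof of Proposition \ref{lebesgue}, replacing $2$ by $2s$ throughout and invoking the fractional counterpart Theorem \ref{holder_fractional} in place of Theorem \ref{holder}. The whole argument is an interpolation-plus-embedding chain, combined with a H\"older inequality in time.

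First I would choose $\alpha\in(s/(p\beta),1)$ (so Theorem \ref{holder_fractional} applies) and set
\[
\nu=(2s-2\alpha)(1-\theta)+2s\,\theta,
\]
so that $H_p^{\nu}(\T)$ is, via real (or complex) interpolation, the intermediate space between $H_p^{2s-2\alpha}(\T)$ and $H_p^{2s}(\T)$ with parameter $\theta$. This gives the pointwise-in-time interpolation bound
\[
\|u(t)\|_{\nu,p}\leq C\,\|u(t)\|_{2s-2\alpha,p}^{1-\theta}\,\|u(t)\|_{2s,p}^{\theta}.
\]
Next, by the Sobolev-type embedding Lemma \ref{inclstatH}-(iii), $H_p^{\nu}(\T)\hookrightarrow H_q^{\nu-d/p+d/q}(\T)$ for $q\ge p$, hence $H_p^{\nu}(\T)\hookrightarrow H_q^{\eta}(\T)$ as soon as $\eta\le \nu-d/p+d/q$. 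Using $\alpha>s/(p\beta)$ one computes
\[
\nu-\frac{d}{p}+\frac{d}{q}=2s-2\alpha(1-\theta)-\frac{d}{p}+\frac{d}{q}<2s+\frac{d}{q}-\frac{d+\tfrac{2s}{\beta}(1-\theta)}{p},
\]
which is exactly the admissible range for $\eta$ stated in the proposition (with $\mu=2s$).

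Then, raising to the power $p/\theta$, integrating in $t\in(0,T)$ and applying H\"older's inequality with exponents $1/(1-\theta)$ and $1/\theta$ yields
\[
\left(\int_0^T\|u(\cdot,t)\|_{\eta,q}^{p/\theta}\,dt\right)^{\theta}\le C\,\sup_{t\le T}\|u(t)\|_{2s-2\alpha,p}^{(1-\theta)p}\,\left(\int_0^T\|u(t)\|_{2s,p}^{p}\,dt\right)^{\theta}.
\]
The $L^p$-in-time norm on the right is dominated by $\|u\|_{\Vb_p^{2s}(Q_T)}^{\theta p}$ by the very definition of the parabolic space, while the sup-in-time term is controlled by
\[
\|u\|_{C([0,T];H_p^{2s-2\alpha}(\T))}\le C\bigl(\|u\|_{\Vb_p^{2s}(Q_T)}+\|u(0)\|_{W^{2s-2s/(p\beta),p}(\T)}\bigr)
\]
thanks to Theorem \ref{holder_fractional} (the condition $\alpha>s/(p\beta)$ is precisely what it requires). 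A final application of Young's inequality to split $A^{(1-\theta)p}B^{\theta p}\lesssim (A+B)^{p}$ gives the first inequality. The second statement then follows by the standard specialisation $\eta=0$, $\theta=p/q$, for which the admissibility constraint reduces exactly to $\tfrac1q>\tfrac1p-\tfrac{2s}{d+2s/\beta}$ under $1<p<d+2s/\beta$.

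I expect no serious obstacle: the arithmetic that matches the interpolation index $\nu$ with the stated condition on $\eta$ is the only place where one must be careful, and it is completely analogous to the non-fractional case $s=1$ already proved in Proposition \ref{lebesgue}. The only delicate point is to verify that the real-interpolation identity relating $H_p^{\nu}$ with $H_p^{2s-2\alpha}$ and $H_p^{2s}$ is valid in the present periodic Bessel-potential setting, but this is built into the framework of Section \ref{susb} and the references \cite{CG1,BL,trbookinterpolation}.
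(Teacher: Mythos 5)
Your proposal is correct and follows exactly the route the paper intends: the paper omits the proof of Proposition \ref{lebesgue2}, stating only that it is ``similar to the above case,'' i.e.\ to Proposition \ref{lebesgue}, and your argument is precisely that adaptation (with $2$ replaced by $2s$, Theorem \ref{holder} replaced by Theorem \ref{holder_fractional}, and the arithmetic check $\nu-\tfrac dp+\tfrac dq<2s+\tfrac dq-\tfrac{d+\frac{2s}{\beta}(1-\theta)}{p}$ carried out correctly, including the specialisation $\eta=0$, $\theta=p/q$ for the second assertion).
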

\section{On time-fractional heat equations: $L^p$-maximal regularity results and representation of solutions}\label{sec;heat_eq}
In this section, we collect some definitions and results for the abstract linear problem \eqref{i;3}. 
We first recall the following notion of solution for \eqref{i;2} (see \cite{PrussBook}).
\begin{defn}\label{strong}
If $f\in L^p(I;X)$, a function $u\in L^p(I;X)$  is said to be a \textit{strong  solution} of \eqref{i;2} on $I$ if $u\in L^p(I;D(A))$ and \eqref{i;3} holds  a.e. on $I$.
\end{defn}


Throughout this paper we will mainly work with strong  solutions belonging to the parabolic space $\Vb_p^2$. Anyhow, we will not specify during our bootstrap procedure which kind of solution we mean, being implicit 
from the context.  At the end, in the existence theorem for the time-fractional Hamilton-Jacobi equations, we will show the existence of  a classical solution of the problem.
\begin{prop}\label{equivalence}
Let $u$ be a strong solution to \eqref{i;2} for $A=-\Delta$ and $f\in C(I;X)$, $X$ being a Banach space. Then   \eqref{i;2} can be rewritten as the Volterra equation \eqref{i;3}.
\end{prop}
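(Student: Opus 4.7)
The plan is to invert the Caputo fractional derivative by applying the Riemann--Liouville fractional integral $I^\beta\phi(t) := (g_\beta \star \phi)(t) = \tfrac{1}{\Gamma(\beta)}\int_0^t(t-\tau)^{\beta-1}\phi(\tau)\,d\tau$ to both sides of \eqref{i;2}, and to invoke the fundamental identity of fractional calculus
\[
I^\beta\bigl(\partial_{(0,t]}^\beta u\bigr)(t) = u(t) - u(0).
\]
Under the strong-solution regularity $u \in L^p(I;D(-\Delta))$ together with the continuity assumption $f \in C(I;X)$, the convolutions $g_\beta \star Au$ and $F = g_\beta \star f$ are well defined in $X$ (the latter even continuously), so all the subsequent manipulations make sense at least pointwise a.e.\ on $I$.

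For the key identity, I would substitute the definitions and apply Fubini's theorem to exchange the order of integration, obtaining
\[
I^\beta\bigl(\partial_{(0,t]}^\beta u\bigr)(t) = \frac{1}{\Gamma(\beta)\Gamma(1-\beta)}\int_0^t\partial_\tau u(\tau)\!\int_\tau^t(t-s)^{\beta-1}(s-\tau)^{-\beta}\,ds\,d\tau.
\]
The inner integral is evaluated through the substitution $s = \tau + r(t-\tau)$, $r\in(0,1)$, which produces the classical Beta function identity
\[
\int_\tau^t(t-s)^{\beta-1}(s-\tau)^{-\beta}\,ds = \int_0^1(1-r)^{\beta-1}r^{-\beta}\,dr = B(\beta,1-\beta) = \Gamma(\beta)\Gamma(1-\beta),
\]
so the factors of $\Gamma$ cancel and the double integral collapses to $\int_0^t\partial_\tau u(\tau)\,d\tau = u(t)-u(0)$. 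Applying $I^\beta$ to \eqref{i;2} and using this identity then yields
\[
u(t) - u(0) + (g_\beta \star Au)(t) = F(t),
\]
which, once rearranged to isolate $u(t)$ on the left, is precisely \eqref{i;3}.

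The main technical obstacle is justifying Fubini and the existence of $\partial_\tau u$ pointwise when $u$ is merely a strong solution in $\Vb_p^2$ rather than a smooth function. The standard remedy is to recast the Caputo derivative in Riemann--Liouville form, $\partial_{(0,t]}^\beta u = \partial_t\bigl(g_{1-\beta}\star(u-u(0))\bigr)$, so that no pointwise time derivative of $u$ is needed; the analogous identity $g_\beta\star\partial_t\bigl(g_{1-\beta}\star(u-u(0))\bigr) = u-u(0)$ then follows immediately from the semigroup property $g_\beta \star g_{1-\beta} = g_1 \equiv 1$ of the Riemann--Liouville kernels, together with the vanishing trace $\bigl(g_{1-\beta}\star(u-u(0))\bigr)\big|_{t=0}=0$. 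Under the hypotheses $u\in L^p(I;D(-\Delta))$ and $f\in C(I;X)$, every term appearing in \eqref{i;3} lies in $L^p$, so the identity passes to the almost everywhere pointwise equality required by Definition \ref{strong}.
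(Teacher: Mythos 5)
Your proposal is correct and follows exactly the route the paper indicates (applying the Riemann--Liouville integral $I^\beta=g_\beta\star{}$ to both sides and invoking the inversion identity $I^\beta(\partial_{(0,t]}^\beta u)=u-u(0)$, justified via the kernel identity $g_\beta\star g_{1-\beta}=g_1$); the paper merely cites \cite{PrussBook,Vergara,PrussSimonett} for this, whereas you supply the Fubini/Beta-function details and the low-regularity reformulation. The only cosmetic discrepancy is the sign in front of the $Au$ term, which already sits ambiguously between \eqref{i;2} (written with $+Au$) and \eqref{i;3} in the paper itself.
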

\begin{proof}
The result is well-known and it can be seen by applying the Riemann-Liouville integral operator to both sides of \eqref{i;2} (see e.g. \cite{PrussBook}, \cite{Vergara} and \cite{PrussSimonett}). Furthermore, it can be also obtained via a Laplace transform approach (see e.g. \cite[Appendix T]{AV}).
\end{proof}
The previous proposition shows that maximal regularity result  to equation \eqref{i;2} can be inferred from that of \eqref{i;3}. We thus have the following important result, whose proof   can be found in classical references for abstract Volterra equation (see e.g.  the recent survey \cite{Zmax}), \cite[Theorem 4.5.15]{PrussSimonett} and also \cite{PrussBook,ZacherIBVP}).
\begin{thm}\label{thm;maxreg}
Let $p>1$, $\beta\in(0,1)$ be such that $\beta>\frac{1}{p}$, $f\in L^p(Q)$ and $u_0\in W^{2-\frac{2}{p\beta},p}(\T)$. Then there exists a unique strong solution $u\in \Vb_p^2(Q)$ to \eqref{i;2} if and only if $f\in L^p(Q_T)$ and $u_0\in W^{2-\frac{2}{p\beta},p}(\T)$ and it holds the estimate
\begin{equation}\label{estczV}
\|u\|_{\Vb_p^{2}(Q)}\leq C(\|f\|_{L^p(Q)}+\|u_0\|_{W^{2-\frac{2}{p\beta},p}(\T)}).
\end{equation}
\end{thm}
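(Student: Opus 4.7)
The plan is to prove the equivalence via a standard reduction to the case of vanishing initial datum, so that the problem fits into the framework of abstract Volterra equations with zero initial condition, and then to invoke the $L^p$-maximal regularity of $-\Delta$ on $L^p(\T)$ from the references already collected in the paper.

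For the necessity direction, if $u\in\Vb_p^2(Q)$ is a strong solution, then by definition of the parabolic space both $\partial_{(0,t]}^\beta u$ and $\Delta u$ belong to $L^p(Q)$, hence $f=\partial_{(0,t]}^\beta u-\Delta u\in L^p(Q)$; the initial trace $u_0=u(0)$ lies in $W^{2-2/(p\beta),p}(\T)$ by Proposition \ref{emb} (which requires $\beta>1/p$ so that the trace at $t=0$ makes sense). For sufficiency, I would first reduce to the case $u_0=0$: by the trace characterization of $\Vb_p^2(Q)$ discussed in Section \ref{sec;parab_sob}, there exists a lift $\bar u\in\Vb_p^2(Q)$ with $\bar u(0)=u_0$ and
\[
\|\bar u\|_{\Vb_p^2(Q)}\leq C\|u_0\|_{W^{2-2/(p\beta),p}(\T)}.
\]
Setting $v:=u-\bar u$ and $\tilde f:=f-\partial_{(0,t]}^\beta \bar u+\Delta\bar u\in L^p(Q)$ reduces the statement to producing a unique $v\in\Vb_p^2(Q)$ with $v(0)=0$ satisfying $\partial_{(0,t]}^\beta v-\Delta v=\tilde f$, together with $\|v\|_{\Vb_p^2(Q)}\leq C\|\tilde f\|_{L^p(Q)}$.

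Now Proposition \ref{equivalence} rewrites this zero-initial-trace problem as the abstract Volterra equation \eqref{i;3} on $I=(0,T)$ with $A=-\Delta$ and $v(0)=0$, namely $v=-g_\beta\star\Delta v+g_\beta\star\tilde f$. The core analytic step is then the $L^p$-maximal regularity for this Volterra equation: the operator $-\Delta$ on $L^p(\T)$ is $R$-sectorial of angle $0$ (indeed it admits a bounded $H^\infty$-calculus), so by the general theory in \cite[Theorem 4.5.15]{PrussSimonett} (see also \cite{ZacherIBVP,Zmax}) the map $\tilde f\mapsto(\partial_{(0,t]}^\beta v,\Delta v)$ is a topological isomorphism from $L^p(Q)$ onto the corresponding subspace of $L^p(Q)\times L^p(Q)$ with zero initial trace. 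This yields simultaneously existence, uniqueness, and the estimate for $v$; combining with the control of $\bar u$ in terms of $u_0$ gives \eqref{estczV}.

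The main obstacle is precisely the abstract maximal regularity step, whose proof goes through operator-valued Mikhlin/Weis multiplier theorems applied to the symbol $\lambda^\beta(\lambda^\beta+A)^{-1}$ together with the $R$-sectoriality of $A=-\Delta$ on $L^p(\T)$. Rather than reproducing this machinery, I would cite the block of results in \cite{PrussSimonett,PrussBook,ZacherIBVP,Zmax}, and devote the exposition to checking that the concrete operator $-\Delta$ on the torus fits the abstract hypotheses (which is classical, e.g.\ via periodic Mihlin multiplier theorems) and to the reduction to zero initial datum described above, which is the place where the specific trace space $W^{2-2/(p\beta),p}(\T)$ enters.
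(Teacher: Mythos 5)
Your proposal is correct and matches the paper's treatment: the paper gives no proof of Theorem \ref{thm;maxreg} at all, deferring entirely to the same block of references (\cite[Theorem 4.5.15]{PrussSimonett}, \cite{ZacherIBVP,Zmax,PrussBook}) that you cite for the core maximal-regularity step. Your additional material --- the necessity direction via the trace embedding and the reduction to zero initial datum by subtracting a lift $\bar u$ with $\|\bar u\|_{\Vb_p^2}\leq C\|u_0\|_{W^{2-2/(p\beta),p}}$ --- is a standard and sound way to make the citation precise, and is consistent with how the paper itself uses this device elsewhere (e.g.\ in the proof of Theorem \ref{holder}).
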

We also point out that solutions to \eqref{i;2} can be expressed via a suitable modification of the classical variation of parameters formula, also known as \textit{Duhamel's formula}. A proof is provided in \cite{TaylorNote} in the Hilbert space setting. However, the proof can be readily accommodated to handle   Banach spaces. 
For each $\beta\in(0,1)$ we define the Mittag-Leffler operators
\begin{align*}
&E_\beta(-t^\beta A)=\int_0^\infty M_\beta(\eta)e^{-\eta t^\beta A}d\eta,
\\
&E_{\beta,\beta}(-t^\beta A)=\int_0^\infty \beta \eta M_\beta(\eta)e^{-\eta t^\beta A}d\eta,
\end{align*}
where 
\[M_\beta(z)=\sum_{n=0}^\infty\frac{z^n}{n!\Gamma(1-\beta(1+n))}\] 
denotes    the Mainardi function (see \cite{TaylorNote}). We recall the following useful property of $M_\beta$ (see \cite[Proposition 2]{Neto}).
\begin{prop}\label{propM}
For $\beta\in(0,1)$ and $-1<r<\infty$, when restricting $M_\beta$ to the positive real line, it holds
\[
M_\beta(t)\geq0\text{ for all $t\geq0$, }\quad\int_0^\infty t^rM_\beta(t)dt=\frac{\Gamma(r+1)}{\Gamma(\beta r+1)}.
\]
\end{prop}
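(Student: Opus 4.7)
The plan is to establish the two claims separately, both anchored on the Laplace transform of $M_\beta$. Termwise integration of the defining series yields
\[
\mathcal{L}[M_\beta](s) = \sum_{n\ge 0} \frac{(-s)^n}{\Gamma(\beta n + 1)} = E_\beta(-s),
\]
the classical Mittag-Leffler function. For the non-negativity claim, I would invoke Pollard's classical result that $s \mapsto E_\beta(-s)$ is completely monotone on $(0,\infty)$ for $\beta \in (0,1)$. By Bernstein's theorem it is then the Laplace transform of a non-negative Borel measure on $[0,\infty)$, and uniqueness of the Laplace transform identifies this measure with $M_\beta(t)\,dt$; continuity of $M_\beta$ promotes a.e. non-negativity to the pointwise statement.

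For the moment formula, the case of integer $r = n \ge 0$ is immediate: Taylor expansion of $E_\beta(-s) = \mathcal{L}[M_\beta](s)$ at $s = 0$ and coefficient matching with $\sum_n \frac{(-s)^n}{n!}\int_0^\infty t^n M_\beta(t)\,dt$ give $\int_0^\infty t^n M_\beta(t)\,dt = \Gamma(n+1)/\Gamma(\beta n + 1)$. For general $r > -1$, I would turn to the Mellin transform via the Hankel-contour representation
\[
M_\beta(t) = \frac{1}{2\pi i}\int_{\mathrm{Ha}} e^{\sigma - t\sigma^\beta}\,\sigma^{\beta-1}\,d\sigma,
\]
swap the order of integration (legitimate thanks to super-exponential decay along the contour), and use $\int_0^\infty t^r e^{-t\sigma^\beta}\,dt = \Gamma(r+1)\sigma^{-\beta(r+1)}$ to reduce the remaining contour integral to Hankel's representation of $1/\Gamma(\beta r + 1)$.

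The main obstacle is the rigorous justification of the Mellin computation, since the series defining $M_\beta$ is only conditionally convergent on the positive axis and the Hankel contour requires careful deformation. A cleaner alternative is analytic continuation: both sides of the moment identity, viewed as functions of $r$, are holomorphic on $\{\mathrm{Re}(r) > -1\}$ (absolute integrability of $t^r M_\beta(t)$ rests on the finite value $M_\beta(0^+) = 1/\Gamma(1-\beta)$ and on superpolynomial decay of $M_\beta$ at infinity, itself deducible from the Hankel representation), they coincide on $\mathbb{N}$, and once appropriate exponential-type growth bounds are verified, a Carlson-type uniqueness theorem forces equality throughout the half-plane.
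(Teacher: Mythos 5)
The paper offers no proof of this proposition: it is recalled with a citation to \cite[Proposition 2]{Neto}, so there is no internal argument to compare against. Your sketch reproduces the standard proof found in that reference and in Mainardi's work: the Hankel-contour representation of the Mainardi (Wright-type) function, the Laplace-transform identity $\mathcal{L}[M_\beta]=E_\beta(-\cdot)$, Pollard's complete monotonicity of $E_\beta(-\cdot)$ plus Bernstein's theorem for the sign, and a Fubini argument on the Hankel contour for the moments. Of your two options for general $r>-1$, the contour computation is the one used in the literature and is the shorter route once the representation is available, since $\int_0^\infty t^r e^{-t\sigma^\beta}\,dt=\Gamma(r+1)\sigma^{-\beta(r+1)}$ reduces the moment to Hankel's formula for $1/\Gamma(\beta r+1)$; the Carlson alternative works too but requires verifying the exponential-type bounds you only allude to, and it already needs the nonnegativity of $M_\beta$ to control $\bigl|\int_0^\infty t^rM_\beta(t)\,dt\bigr|$ on vertical lines.

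One step as written is incorrect and needs repair. Termwise Laplace integration of the defining power series does \emph{not} give $E_\beta(-s)$: since $\int_0^\infty e^{-st}t^n\,dt=n!\,s^{-n-1}$, it produces $\sum_{n\ge 0}(-1)^n s^{-n-1}/\Gamma(1-\beta(n+1))$, which is the divergent asymptotic expansion of $E_\beta(-s)$ as $s\to\infty$, not its Taylor series at $0$. The identity $\mathcal{L}[M_\beta](s)=E_\beta(-s)$ must itself be obtained from the Hankel representation (interchanging the $t$- and $\sigma$-integrals yields $\frac{1}{2\pi i}\int_{\mathrm{Ha}} e^{\sigma}\sigma^{\beta-1}(s+\sigma^\beta)^{-1}\,d\sigma=E_\beta(-s)$) or simply quoted from the literature. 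Since both your nonnegativity argument and your integer-moment argument lean on this identity, this is the first thing to fix; once it is in place, the rest goes through, granted the superexponential decay of $M_\beta$ at infinity that you correctly identify as the ingredient justifying the interchanges. (Incidentally, the paper's displayed series for $M_\beta$ has $z^n$ where the standard Mainardi function has $(-z)^n$; you have tacitly used the standard convention, which is the one for which the proposition holds.)
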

Note that $E_\beta(-t^\beta A)$ and $E_{\beta,\beta}(-t^\beta A)$ are well-defined from $X$ into $X$ and, for every $x\in X$, the functions $t\longmapsto E_{\beta}(-t^\beta A)x$ and $t\longmapsto E_{\beta,\beta}(-t^\beta A)x$ are analytic from $[0,\infty)$ to $X$. The Mittag-Leffler operators do not generate semigroups, but they fulfill some additional properties which are close to that of semigroups. For instance, the function $t\longmapsto E_{\beta}(-t^\beta A)x$ is continuous and analytical when $e^{tA}$ generate an analytic semigroup and satisfies
\[
\partial_{(0,t]}^{\beta} E_\beta(-t^\beta A)x=-AE_\beta(-t^\beta A)x\ ,t>0.
\]
\begin{prop}
Let $u$ be a strong solution to  \eqref{i;2}. Then
\begin{equation}\label{duhamel}
u(t)=E_{\beta}(-t^\beta A)u_0+\int_0^\infty \omega^{\beta-1}E_{\beta,\beta}(-\omega^\beta A)f(t-\omega)d\omega.
\end{equation} 
\end{prop}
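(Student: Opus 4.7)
The plan is to derive \eqref{duhamel} via Laplace transform in the time variable. Starting from the equivalent Volterra reformulation \eqref{i;3}, I would apply $\mathcal{L}[\cdot](\lambda)=\int_0^\infty e^{-\lambda t}(\cdot)\,dt$ to both sides, using closedness of $A$ to obtain $\mathcal{L}[Au]=A\hat{u}$, the elementary pair $\mathcal{L}[g_\beta](\lambda)=\lambda^{-\beta}$, and the convolution theorem to compute $\mathcal{L}[F](\lambda)=\lambda^{-\beta}\hat{f}(\lambda)$. These yield the algebraic identity
\[
(\lambda^\beta I + A)\,\hat{u}(\lambda) = \lambda^{\beta-1} u_0 + \hat{f}(\lambda),
\]
valid on a right half-plane $\{\operatorname{Re}\lambda > \omega_0\}$ on which the resolvent $(\lambda^\beta I + A)^{-1}$ is well defined; this is precisely the regime in which $E_\beta(-t^\beta A)$ and $E_{\beta,\beta}(-t^\beta A)$ make sense as bounded operators on $X$.

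The second step is to identify the two operator-valued Laplace transforms
\[
\mathcal{L}\bigl[E_\beta(-t^\beta A)\bigr](\lambda) = \lambda^{\beta-1}(\lambda^\beta I + A)^{-1}, \qquad \mathcal{L}\bigl[t^{\beta-1}E_{\beta,\beta}(-t^\beta A)\bigr](\lambda) = (\lambda^\beta I + A)^{-1}.
\]
I would derive these by expanding the scalar Mittag-Leffler functions in power series, integrating termwise against $e^{-\lambda t}$ via $\int_0^\infty e^{-\lambda t}\,t^{n\beta+\alpha-1}\,dt = \Gamma(n\beta+\alpha)\,\lambda^{-(n\beta+\alpha)}$, and resumming the resulting Neumann series for $(\lambda^\beta I + A)^{-1}$ in the region where $|\lambda|^\beta$ dominates the resolvent, then extending analytically throughout the half-plane. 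An equivalent, more operator-theoretic derivation uses the subordination representations of $E_\beta$ and $E_{\beta,\beta}$ via the Mainardi density, Proposition \ref{propM}, and Fubini. Substituting these Laplace pairs into the algebraic identity and inverting via the convolution theorem yields
\[
u(t)=E_{\beta}(-t^\beta A)\,u_0+\int_0^t \omega^{\beta-1}E_{\beta,\beta}(-\omega^\beta A)\,f(t-\omega)\,d\omega,
\]
which coincides with \eqref{duhamel} upon extending $f$ by zero outside $I$.

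The main obstacle I foresee is the rigorous justification, in the Banach-space-valued setting, of the interchanges between the closed unbounded operator $A$ and the various integrals (Laplace, Mainardi, convolution) together with the uniqueness of operator-valued Laplace inversion on an appropriate domain of analyticity. This is handled by using that $u$ is a strong solution, hence $u\in L^p(I;D(A))$ by Theorem \ref{thm;maxreg}, and that $-A$ admits a resolvent in the Volterra sense (cf.\ the discussion in Section \ref{sec;parab_sob}), which realizes $E_\beta(-t^\beta A)$ as a Bochner integral of bounded operators and supplies the required resolvent estimates. Once these are in place, the identification of the inverse transforms reduces to the scalar series computation sketched above.
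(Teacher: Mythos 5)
Your proposal is correct and follows essentially the same route as the paper: the authors also argue via the Laplace transform, invoking the identity \eqref{identity} for the Caputo derivative (which is exactly your algebraic relation $(\lambda^\beta I+A)\hat u=\lambda^{\beta-1}u_0+\hat f$) together with the known Laplace transforms of the Mittag-Leffler families, and otherwise refer to the literature for the details you spell out. Your version merely fills in the series/subordination computation of those operator-valued transforms and correctly notes that the integral over $(0,\infty)$ in \eqref{duhamel} is to be read with $f$ extended by zero.
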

\begin{proof}
This result is classical and can be found in e.g. \cite{KellerSegel,TaylorNote,WCX}. It can be deduced using the following identity 
\begin{equation}\label{identity}
\L(\partial_{(0,t]}^{\beta} u)(\omega)=\omega^\beta\L u(\omega)-\omega^{\beta-1}u(0),
\end{equation}
for the Caputo derivative (see e.g. \cite[Proposition 3.13]{LiLiuODE}), where $\L$ stands for the Laplace transform, together with the expression of the Laplace transform of $E_\beta(z)$  (see  \cite[eq. (8.7)]{TaylorNote}). 
\end{proof}
\begin{rem}
A solution of \eqref{i;2}
given by  formula \eqref{duhamel} is usually referred as a mild solution of the problem (see e.g. \cite{KellerSegel}). We remark that if $u$ is a strong solution to an abstract Volterra equation, then $u$ is also a mild solution according to \cite[Definition 1.1]{PrussBook} and, in particular, the variation of parameter formula holds as a consequence of \cite[Proposition 1.2 and 1.3]{PrussBook}.

We also remark that a function described by the variation of parameter formula is not necessarily a strong solution (see e.g. \cite[Proposition 1.2 and Proposition 1.3]{PrussBook}
 and also \cite[Section 3]{Vergara} for further discussions).
\end{rem}

\section{Schauder estimates for the time-fractional heat equation}\label{sec;schauder}
This section is devoted to collect some Schauder-type results for problem \eqref{i;2}.
We first present without proof  an abstract result, giving necessary and sufficient condition for maximal H\"older's regularity for the problem \eqref{i;2}, referring to \cite{ClementTAMS} and the recent results in \cite{Guidetti}.
\begin{thm}\label{Regularity}
Let $I\subseteq\R$ be closed. We have the following:
\begin{itemize}
\item[(i)] Let $\theta\in(0,1)$. There exists a unique classical solution $u$ such that $\partial_{(0,t]}^{\beta} u$ and $Au$ are bounded with values in $(X,D(A))_{\theta,\infty}$ if and only if
\begin{itemize}
\item[(a)] $f\in C(I;X)\cap B(I;(X,D(A))_{\theta,\infty})$, where $B(I;X)$ stands for the space of bounded functions with values in $X$;
\item[(b)] $u_0\in D(A)$ and $Au_0\in (X,D(A))_{\theta,\infty}$.
\end{itemize}
and it holds the estimate
\begin{equation}\label{heat;1}
\|\partial_{(0,t]}^{\beta} u\|_{B(I;(X,D(A))_{\theta,\infty})}+\|A u\|_{B(I;(X,D(A))_{\theta,\infty})}\leq C(\|A u_0\|_{(X,D(A))_{\theta,\infty}}+\|f\|_{B([0,T];(X,D(A))_{\theta,\infty})})
\end{equation}

\item[(ii)]Let $\gamma<\beta$ and $\gamma,\beta\in(0,1)$. Then there exists a unique classical solution to \eqref{i;2} such that $\partial_{(0,t]}^{\beta} u,\, Au$ belong to $C^{\frac{\gamma}{2}}(I;X)$ if and only if
\begin{itemize}
\item[(c)] $f\in C^{\frac{\gamma}{2}}(I;X)$;
\item[(d)] $u_0\in D(A)$ and $Au_0+f(0)\in (X,D(A))_{\gamma/(2\beta),\infty}$.
\end{itemize}
\end{itemize}
In particular, it holds the estimate 
\begin{equation}\label{heat;2}
\|\partial_{(0,t]}^{\beta} u\|_{C^{\frac{\gamma}{2}}(I;X)}+\|A u\|_{C^{\frac{\gamma}{2}}(I;X)}\leq C(\|A u_0\|_{(X,D(A))_{\gamma/(2\beta),\infty}}+\|f\|_{C^{\frac{\gamma}{2}}(I;X)})
\end{equation}
\end{thm}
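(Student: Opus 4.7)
The plan is to base both parts on the Duhamel representation \eqref{duhamel}, replacing the role of the analytic semigroup in Lunardi's classical proof of Schauder-type estimates with the Mittag-Leffler families $E_\beta(-t^\beta A)$ and $E_{\beta,\beta}(-t^\beta A)$. Thanks to the Mainardi subordination formula, these families inherit scaled versions of the smoothing bounds for $e^{-tA}$, with the natural time scale becoming $t^\beta$ rather than $t$. The first step is therefore to derive the basic kernel estimates: using the standard analytic-semigroup bound $\|Ae^{-sA}\|_{X\to X}\lesssim s^{-1}$ and integrating in $\eta$ against $M_\beta$ via Proposition \ref{propM}, one obtains
\begin{equation*}
\|AE_\beta(-t^\beta A)\|_{X\to X}+\|AE_{\beta,\beta}(-t^\beta A)\|_{X\to X}\lesssim t^{-\beta},
\end{equation*}
and, by reiteration between $X$ and $D(A)$, the graded form
\begin{equation*}
\|AE_\beta(-t^\beta A)x\|_X\lesssim t^{-\beta(1-\theta)}\|x\|_{(X,D(A))_{\theta,\infty}},
\end{equation*}
together with companion bounds for time-differences of both operators.

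Second, for sufficiency in (i) I would split $u=u_1+u_2$ with $u_1(t)=E_\beta(-t^\beta A)u_0$ and $u_2$ the convolution piece of \eqref{duhamel}. Since $E_\beta(-t^\beta A)$ commutes with $A$ and is uniformly bounded on both $X$ and $D(A)$, stability of real interpolation yields that $Au_1(t)=E_\beta(-t^\beta A)(Au_0)$ is bounded in $(X,D(A))_{\theta,\infty}$; the bound on $Au_2$ follows from the graded estimate above, which controls the $(X,D(A))_{\theta,\infty}$-norm of the integrand $\omega^{\beta-1}AE_{\beta,\beta}(-\omega^\beta A)f(t-\omega)$ by the integrable kernel $\omega^{\beta\theta-1}\|f\|_{B(I;(X,D(A))_{\theta,\infty})}$. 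The bound on $\partial_{(0,t]}^\beta u$ is then read off from the equation, yielding \eqref{heat;1}. For (ii) one estimates the increment $Au(t)-Au(s)$ by splitting into the $u_1$ and $u_2$ contributions: the former is handled by the time-difference bound of the first step and produces $|t-s|^{\gamma/2}\|Au_0\|_{(X,D(A))_{\gamma/(2\beta),\infty}}$, while the latter is handled by the standard dyadic splitting of the convolution into short-time and long-time regimes relative to $t-s$, using the $C^{\gamma/2}$-modulus of $f$; the joint compatibility on $Au_0+f(0)$ is exactly what is needed to cancel the boundary contribution at $\omega\approx 0$, giving \eqref{heat;2}.

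Third, necessity of the conditions is obtained by reading the equation in the limit $t\to 0^+$ and invoking standard trace-type results in the real interpolation scale: in (i), boundedness of $Au$ and $\partial_{(0,t]}^\beta u$ with values in $(X,D(A))_{\theta,\infty}$ forces $Au_0\in(X,D(A))_{\theta,\infty}$; in (ii), their $C^{\gamma/2}$-regularity together with $f\in C^{\gamma/2}(I;X)$ forces $Au_0+f(0)=f(0)-\partial_{(0,t]}^\beta u|_{t=0}$ to lie in $(X,D(A))_{\gamma/(2\beta),\infty}$. The main obstacle is the fine Hölder estimate for $Au_2$ in part (ii): one has to track carefully the singularity of the kernel $\omega^{\beta-1}AE_{\beta,\beta}(-\omega^\beta A)$ near $\omega=0$ against the Hölder modulus of $f$ minus its value at the endpoint, and show that the leading remainder is absorbed precisely by hypothesis (d). This is the fractional analogue of Lunardi's argument, with a handful of exponents shifted by the factor $\beta$ reflecting the $t^\beta$-scaling intrinsic to the Mittag-Leffler smoothing.
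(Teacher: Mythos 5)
First, note that the paper does not actually prove Theorem \ref{Regularity}: it is stated explicitly ``without proof'', with references to \cite{ClementTAMS} and \cite{Guidetti}. The closest the paper comes to your strategy is the proof of Theorem \ref{Schauder}, which uses the same Duhamel/Mittag-Leffler representation together with the K-method, but only for $A=-\Delta$ on H\"older spaces, only for the sup-in-time estimate, and only under the restriction $\beta\in(1/2,1)$. So you are attempting more than the paper does, by a route in the spirit of its Theorem \ref{Schauder}; unfortunately the central step of your sketch contains a genuine error.

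The gap is in the treatment of the convolution term in part (i). You claim that the $(X,D(A))_{\theta,\infty}$-norm of the integrand $\omega^{\beta-1}AE_{\beta,\beta}(-\omega^{\beta}A)f(t-\omega)$ is controlled by $\omega^{\beta\theta-1}\|f\|_{B(I;(X,D(A))_{\theta,\infty})}$. The graded estimate $\|AE_{\beta,\beta}(-\omega^{\beta}A)x\|_{X}\lesssim \omega^{-\beta(1-\theta)}\|x\|_{(X,D(A))_{\theta,\infty}}$ gives a gain only when landing in $X$; as an operator from $(X,D(A))_{\theta,\infty}$ to itself one cannot do better than $\omega^{-\beta}$ (already false for scalar $A=\lambda$, where $\sup_{\lambda}\lambda e^{-s\lambda}=(es)^{-1}$ on any space), and then $\omega^{\beta-1}\cdot\omega^{-\beta}=\omega^{-1}$ is not integrable. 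So your kernel bound shows $Au_2(t)\in X$ with the right size, but not $Au_2(t)\in(X,D(A))_{\theta,\infty}$, which is the whole content of the theorem. The correct argument is the Da Prato--Grisvard mechanism: one estimates the K-functional $K(\xi,Au_2(t),X,D(A))$ directly by splitting the convolution at $\omega=\xi$, bounding the short-time piece in $X$ and the long-time piece in $D(A)$ --- precisely the decomposition $a(\xi)+b(\xi)+c(\xi)$ carried out in the paper's proof of Theorem \ref{Schauder}. A secondary but real issue: the bound $\|AE_{\beta}(-t^{\beta}A)\|_{X\to X}\lesssim t^{-\beta}$ cannot be obtained by inserting $\|Ae^{-sA}\|\lesssim s^{-1}$ into the subordination integral, since $\int_0^{\infty}\eta^{-1}M_{\beta}(\eta)\,d\eta$ diverges (Proposition \ref{propM} requires $r>-1$); the estimate is true but needs the resolvent representation. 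The extra factor $\eta$ saves the corresponding bound for $E_{\beta,\beta}$, and for the initial-datum term you correctly avoid the issue via $AE_{\beta}(-t^{\beta}A)u_0=E_{\beta}(-t^{\beta}A)Au_0$. Finally, part (ii) and the necessity direction are only gestured at; you name the main obstacle but do not resolve it, and it is exactly at that point that the compatibility condition (d) and (in the paper's concrete version) the restriction $\beta>1/2$ enter.
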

As a byproduct, from estimates \eqref{heat;1} and \eqref{heat;2}, by taking $X=C(\T)$, $\theta=\gamma/(2\beta)$, $A$ as the realization of the Laplacian in $X$ so that $(X,D(A))_{\gamma/(2\beta),\infty}\simeq C^{\frac{\gamma}{\beta}}(\T)$, we get the H\"older  estimate
\begin{equation}\label{heat;3}
\|\partial_{(0,t]}^{\beta} u\|_{\mathcal{C}^{\frac{\gamma}{\beta},\frac{\gamma}{2}}(Q)}+\|-\Delta u\|_{\mathcal{C}^{\frac{\gamma}{\beta},\frac{\gamma}{2}}(Q)}\leq C(\|f\|_{C^\frac{\gamma}{2}(I;C^{\frac{\gamma}{\beta}}(\T))}+\|u_0\|_{C^{2+\frac{\gamma}{\beta}}(\T)}).
\end{equation}
A similar estimate can be obtained for the fully nonlocal operator \eqref{space_time_operator}, i.e. when $A$ is the realization of the fractional Laplacian of order $s\in(0,1)$
\[
\|\partial_{(0,t]}^{\beta} u\|_{\mathcal{C}^{\frac{\gamma}{\beta},\frac{\gamma}{2s}}(Q)}+\|(-\Delta)^s u\|_{\mathcal{C}^{\frac{\gamma}{\beta},\frac{\gamma}{2s}}(Q)}\leq C(\|f\|_{C^\frac{\gamma}{2s}(I;C^{\frac{\gamma}{\beta}}(Q))}+\|u_0\|_{C^{2+\frac{\gamma}{\beta}}(\T)})\ .
\]
We now provide a proof of the H\"older's regularity estimate \eqref{heat;3}  which exploits   formula \eqref{duhamel} and the K-method introduced in Section \ref{susb}. This approach is inspired by the tools used in \cite{LunardiNote} (see also \cite{LunardiBook,Sinestrari}) which however requires to restrict the range of $\beta\in(1/2,1)$. We remind the reader that Schauder type estimates for the classical heat operator go back to \cite{LSU} (see also \cite{LunardiSNS}), while for the space-fractional heat equation we refer the reader to \cite{CF,FRRO,CG1}. 

We begin with some preliminary decay estimates for the fractional heat semigroup $e^{t\Delta}$ in H\"older spaces.
\begin{lemma}\label{EstHolFracHeat}
For every $\theta_1,\theta_2\in\R$, $0\leq\theta_1<\theta_2$,  there exists $C=C(\theta_1,\theta_2)$ such that for all $f\in C^{\theta_1}(\T)$
  \[
  \|e^{t\Delta }f\|_{C^{\theta_2}(\T)}\leq Ct^{-(\theta_2-\theta_1)/2} \|f\|_{C^{\theta_1}(\T)}\ .
  \]
\end{lemma}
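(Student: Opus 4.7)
The plan is to derive the decay estimate from two endpoint bounds for the periodic heat semigroup combined with real interpolation on Hölder scales, in the spirit of the K-method recalled in Section \ref{susb}. Writing $e^{t\Delta} f = G_t \ast f$ with $G_t$ the periodic heat kernel, so that $\|G_t\|_{L^1(\T)} = 1$ and the Gaussian-type estimate $\|\partial^\alpha G_t\|_{L^1(\T)} \leq C_\alpha t^{-|\alpha|/2}$ holds uniformly for $t$ in bounded intervals, the convolution inequality applied both to the sup norm and to the Hölder seminorm of $f$ yields the contraction $\|e^{t\Delta} f\|_{C^{\theta_1}(\T)} \leq \|f\|_{C^{\theta_1}(\T)}$, while $\partial^\alpha (G_t \ast f) = \partial^\alpha G_t \ast f$ gives, for any integer $N \geq 1$ with $\theta_1 + 2N$ non-integer, the integer-gain smoothing
\[
\|e^{t\Delta} f\|_{C^{\theta_1 + 2N}(\T)} \leq C t^{-N} \|f\|_{C^{\theta_1}(\T)}.
\]

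Next, I interpolate between these two endpoints via the K-method. Choosing $N$ with $2N > \theta_2 - \theta_1$ and setting $\lambda = (\theta_2 - \theta_1)/(2N) \in (0,1)$, the classical identification of real interpolation between Hölder classes on the torus,
\[
(C^{\theta_1}(\T), C^{\theta_1 + 2N}(\T))_{\lambda, \infty} \simeq C^{\theta_2}(\T),
\]
(for non-integer exponents, in the spirit of the example $(C(\T), C^1(\T))_{\theta, \infty} = C^\theta(\T)$ recorded in Section \ref{susb}), combined with the convexity of operator norms along the interpolation scale applied to $T = e^{t\Delta}$, produces
\[
\|e^{t\Delta}\|_{C^{\theta_1}(\T) \to C^{\theta_2}(\T)} \leq C \|e^{t\Delta}\|_{C^{\theta_1} \to C^{\theta_1}}^{1-\lambda} \|e^{t\Delta}\|_{C^{\theta_1} \to C^{\theta_1+2N}}^{\lambda} \leq C t^{-(\theta_2 - \theta_1)/2},
\]
which is the claim.

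The main technical obstacle is the clean identification of the real interpolation space between Hölder classes when an exponent is an integer, where the Zygmund class arises. I will sidestep this by first assuming that $\theta_1$ and $\theta_2$ are non-integer and then recovering the remaining cases by monotonicity: for integer $\theta_i$ one applies the estimate with slightly perturbed non-integer exponents $\theta_1' = \theta_1 - \varepsilon$ and $\theta_2' = \theta_2 + \varepsilon$ and uses the continuous inclusions $C^{\theta_1}(\T) \hookrightarrow C^{\theta_1'}(\T)$ and $C^{\theta_2'}(\T) \hookrightarrow C^{\theta_2}(\T)$, noting that the gap $\theta_2' - \theta_1' = (\theta_2 - \theta_1) + 2\varepsilon$ can be absorbed in the constant $C$. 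A minor technicality is the range of $t$: since $e^{t\Delta}$ preserves constants, the stated inequality cannot hold uniformly as $t \to \infty$, but restricting to $t \in (0, T]$ (with constant allowed to depend on $T$) is sufficient for the subsequent Duhamel representation \eqref{duhamel} used in the Schauder argument.
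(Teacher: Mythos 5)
Your proposal is correct and follows essentially the same route as the paper's proof: integer-order smoothing bounds for $e^{t\Delta}$ obtained from the periodic heat kernel and Young's inequality for convolutions, followed by real interpolation between H\"older classes (via the K-method/Reiteration Theorem) and the interpolation inequality for operator norms to reach fractional exponents. Your additional remarks on integer exponents and on restricting to bounded $t$ are sensible refinements of the same argument rather than a different approach.
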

\begin{proof}
Computations of \cite[Remark 2.7]{CG1} (in particular the representation formula for heat semigroup  and Young's inequality for convolution) show that for every $k>h$, $k,h\in\N\cup\{0\}$ there exists $C=C(h,k)$
\[
\|e^{t\Delta}f\|_{C^{k+h}(\T)}\leq Ct^{-\frac{k}{2}}\|f\|_{C^h(\T)}\ .
\]
This implies that $e^{t\Delta}f:C^{h}(\T)\to C^{k+h}(\T)$ is bounded for $t > 0$. Recall that, as a consequence of the so-called Reiteration Theorem \cite[Section 1.2.4]{LunardiBook} and \cite[Theorem 1.1.14 and Example 1.1.7]{LunardiNote} (whose proofs can be readily adapted to the torus) we get
\[
(C^h(\T),C^{k+h}(\T))_{\alpha,\infty}=C^{h+\alpha}(\T)\ .
\]
In addition, one also has $e^{t\Delta}f:L^{\infty}(\T)\to L^{\infty}(\T)$. By interpolation (see \cite[Proposition 1.2.6]{LunardiBook}), $e^{t\Delta}$ maps $C^{\theta_1}(\T)$ onto $C^{\theta_2}(\T)$ with the desired estimate. 
\end{proof}
We also recall the following interpolation result.
\begin{lemma}\label{interpholder}
Let $0\leq\theta_1<\theta_2$. For $\sigma\in(0,1)$ such that $(1-\sigma)\theta_1+\sigma\theta_2$ is not an integer, we have
\[
(C^{\theta_1}(\T),C^{\theta_2}(\T))_{\sigma,\infty}=C^{(1-\sigma)\theta_1+\sigma\theta_2}(\T)
\]
\end{lemma}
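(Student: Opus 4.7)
The plan is to apply the Reiteration Theorem of real interpolation with base couple $(C(\T),C^N(\T))$ for a sufficiently large integer $N$, reducing the statement to the well-known identification of fractional Hölder spaces as interpolation spaces between continuous functions and integer-order Hölder classes.

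I would first fix an integer $N$ with $N>\theta_2$ and use the standard identification
\[(C(\T),C^{N}(\T))_{\eta,\infty}=C^{N\eta}(\T)\qquad\text{for }\eta\in(0,1)\text{ with }N\eta\notin\N,\]
which on $\R^d$ follows from \cite[Example 1.1.7 and Theorem 1.1.14]{LunardiNote} and transfers to the torus by the same procedure already used in Lemma \ref{EstHolFracHeat}. Setting $\eta_i:=\theta_i/N$, the endpoint spaces are then realised as $C^{\theta_i}(\T)=(C(\T),C^N(\T))_{\eta_i,\infty}$ for $i=1,2$, where the extreme values $\eta=0,1$ are interpreted with the usual convention that returns $C(\T)$ and $C^N(\T)$ respectively.

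I would then invoke the Reiteration Theorem (see \cite[Section 1.2.4]{LunardiBook}), which yields
\[(C^{\theta_1}(\T),C^{\theta_2}(\T))_{\sigma,\infty}=(C(\T),C^N(\T))_{(1-\sigma)\eta_1+\sigma\eta_2,\infty}.\]
Since $(1-\sigma)\theta_1+\sigma\theta_2=N((1-\sigma)\eta_1+\sigma\eta_2)$ is non-integer by hypothesis, the base identification applies once more at this new exponent and produces the desired space $C^{(1-\sigma)\theta_1+\sigma\theta_2}(\T)$.

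The main delicate point is the endpoint description $(C(\T),C^N(\T))_{\theta_i/N,\infty}=C^{\theta_i}(\T)$ in the borderline case where $\theta_i$ is itself an integer, since real interpolation at an integer exponent yields an a priori larger Zygmund-type class. A complete treatment would handle this case either by a density argument or by selecting a different base couple in which the relevant integer order appears as an endpoint, so that no interpolation is actually performed there. Crucially, the non-integer hypothesis on $(1-\sigma)\theta_1+\sigma\theta_2$ is precisely what guarantees that no such Zygmund phenomenon appears at the output exponent, so that the conclusion is a genuine Hölder space.
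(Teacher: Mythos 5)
Your proof is correct and follows essentially the same route as the paper, whose entire argument is the single citation of the Reiteration Theorem in \cite[Section 1.2.4]{LunardiBook}; you have simply filled in the details the paper omits. The endpoint subtlety you flag (integer $\theta_i$ giving a Zygmund rather than a H\"older space) is resolved by the standard form of the Reiteration Theorem, which only requires the endpoint spaces to be of class $J(\eta_i)\cap K(\eta_i)$ relative to the base couple $(C(\T),C^N(\T))$ --- a property $C^{\theta_i}(\T)$ enjoys even when $\theta_i$ is an integer --- so no change of base couple or density argument is needed.
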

\begin{proof}
The proof is a consequence of the Reiteration Theorem \cite[Section 1.2.4]{LunardiBook} 
\end{proof}

\begin{thm}\label{Schauder}
Let $\beta\in(1/2,1)$, $\gamma\in(0,1)$ with $\gamma\neq\beta$, $f\in\mathcal{C}^{\frac{\gamma}{\beta},\frac{\gamma}{2}}(Q)$ and $u_0\in C^{2+\frac{\gamma}{\beta}}(\T)$. Then, there exists a constant $C$, depending on $d,T,\beta,\gamma$ (which remains bounded for bounded values of $T$) such that every classical solution to \eqref{i;2} fulfills
\[
\sup_{t\in[0,T]}\|u(\cdot,t)\|_{C^{2+\frac{\gamma}{\beta}}(\T)}\leq C(\sup_{t\in[0,T]}\|f(\cdot,t)\|_{C^{\frac{\gamma}{\beta}}(\T)}+\|u_0\|_{C^{2+\frac{\gamma}{\beta}}(\T)})
\]
\end{thm}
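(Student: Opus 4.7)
The plan is to exploit the Duhamel representation \eqref{duhamel}, decomposing $u(t)=u_h(t)+u_p(t)$ with $u_h(t):=E_\beta(-t^\beta A)u_0$ and $u_p(t):=\int_0^t\omega^{\beta-1}E_{\beta,\beta}(-\omega^\beta A)f(t-\omega)\,d\omega$. Since by Proposition \ref{propM} the Mainardi function $M_\beta$ is a probability density on $[0,\infty)$ and the heat semigroup $e^{\tau\Delta}$ on $\T$ is a contraction on every H\"older class (it is convolution with the periodic heat kernel), the homogeneous part is immediately controlled:
\[
\|u_h(t)\|_{C^{2+\gamma/\beta}(\T)}\leq \int_0^\infty M_\beta(\eta)\,\|e^{-\eta t^\beta A}u_0\|_{C^{2+\gamma/\beta}(\T)}\,d\eta \leq \|u_0\|_{C^{2+\gamma/\beta}(\T)}.
\]

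The main work is on $Au_p(t)$. Inserting the Mainardi representation of $E_{\beta,\beta}$ and changing variables $\sigma=\eta\omega^\beta$ in the inner integral yields the key identity $\int_0^t\omega^{\beta-1}AE_{\beta,\beta}(-\omega^\beta A)\,d\omega=I-E_\beta(-t^\beta A)$. Combined with the splitting $f(t-\omega)=[f(t-\omega)-f(t)]+f(t)$ this gives
\[
Au_p(t)=\underbrace{\int_0^t\omega^{\beta-1}AE_{\beta,\beta}(-\omega^\beta A)\bigl[f(t-\omega)-f(t)\bigr]\,d\omega}_{=:R(t)}+\bigl(I-E_\beta(-t^\beta A)\bigr)f(t).
\]
The algebraic term is bounded in $C^{\gamma/\beta}(\T)$ by $C\|f(t)\|_{C^{\gamma/\beta}(\T)}$ via Proposition \ref{propM}. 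Because $\|v\|_{C^{2+\gamma/\beta}(\T)}\sim \|v\|_\infty + \|Av\|_{C^{\gamma/\beta}(\T)}$ by standard Schauder for $-\Delta$ on $\T$, and $\|u_p(t)\|_\infty$ is trivially controlled from Duhamel, the heart of the proof reduces to the estimate $\|R(t)\|_{C^{\gamma/\beta}(\T)}\leq C\|f\|_{\mathcal{C}^{\gamma/\beta,\gamma/2}(Q)}$.

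To bound $R(t)$ in $C^{\gamma/\beta}(\T)$ I pass through the K-method. By Lemma \ref{interpholder} one has $C^{\gamma/\beta}(\T)\simeq (C(\T),C^2(\T))_{\gamma/(2\beta),\infty}$ provided $\gamma/(2\beta)\in(0,1)\setminus\{1/2\}$, which is guaranteed by $\beta\in(1/2,1)$ (so that $\gamma/(2\beta)<1$ for every $\gamma<1$) and $\gamma\neq\beta$. It therefore suffices to show $K(\tau,R(t);C(\T),C^2(\T))\leq C\tau^{\gamma/(2\beta)}\|f\|_{\mathcal{C}^{\gamma/\beta,\gamma/2}(Q)}$ for every $\tau>0$, which I do by splitting $R(t)=J_1(\tau)+J_2(\tau)$ at $\omega=\tau^{1/\beta}$. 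For $J_1$ combine the crude smoothing $\|Ae^{-\sigma A}g\|_\infty\leq C\sigma^{-1}\|g\|_\infty$ from Lemma \ref{EstHolFracHeat} with $\|f(\cdot,t-\omega)-f(\cdot,t)\|_\infty\leq C\omega^{\gamma/2}\|f\|_{\mathcal{C}^{\gamma/\beta,\gamma/2}}$; averaging against $M_\beta$ produces the integrable kernel $\omega^{-1+\gamma/2}$ and hence $\|J_1(\tau)\|_\infty\leq C\tau^{\gamma/(2\beta)}\|f\|_{\mathcal{C}^{\gamma/\beta,\gamma/2}}$. For $J_2$, use Lemma \ref{EstHolFracHeat} with $\theta_2=4$, $\theta_1=\gamma/\beta$ to obtain $\|Ae^{-\sigma A}g\|_{C^2(\T)}\leq C\sigma^{-2+\gamma/(2\beta)}\|g\|_{C^{\gamma/\beta}(\T)}$; Mainardi averaging gives the kernel $\omega^{-\beta-1+\gamma/2}$, whose integral from $\tau^{1/\beta}$ to $t$ is dominated by $\tau^{-1+\gamma/(2\beta)}/(\beta-\gamma/2)$, so that $\tau\|J_2(\tau)\|_{C^2}\leq C\tau^{\gamma/(2\beta)}\|f\|_{\mathcal{C}^{\gamma/\beta,\gamma/2}}$.

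The main obstacle is that a direct $C^{\gamma/\beta}$ estimate of $R(t)$ fails: the pointwise bound $\|AE_{\beta,\beta}(-\omega^\beta A)g\|_{C^{\gamma/\beta}}\leq C\omega^{-\beta}\|g\|_{C^{\gamma/\beta}}$ combined with the $\omega^{\beta-1}$ weight produces a non-integrable $\omega^{-1}$ singularity. Routing the estimate through the K-functional at level $(C(\T),C^2(\T))_{\gamma/(2\beta),\infty}$ is precisely what lets one trade part of this singularity either against the temporal H\"older decay $\omega^{\gamma/2}$ of $f$ (in $J_1$) or against the stronger smoothing exponent $\sigma^{-2+\gamma/(2\beta)}$ (in $J_2$). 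The hypothesis $\beta>1/2$ is precisely what makes this trade admissible: it guarantees simultaneously $\gamma/(2\beta)<1$ (needed by Lemma \ref{interpholder} to identify the interpolation space as $C^{\gamma/\beta}$) and $\gamma/2<\beta$ (needed so that the $J_2$-kernel antiderivative does not vanish in its denominator and produces the correct $\tau^{-1+\gamma/(2\beta)}$ growth). Collecting the bounds for $u_h$, for the algebraic term, and for $R(t)$ yields the claimed inequality uniformly in $t\in[0,T]$, with constant continuous in $T$.
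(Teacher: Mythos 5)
Your argument is essentially correct, but it follows a genuinely different route from the paper's, and as written it proves a slightly weaker inequality than the one stated. The paper does not subtract $f(t)$ at all: it estimates the K-functional of $u(t)$ directly in the pair $\bigl(C^{\frac{\gamma}{\beta}+\delta}(\T),C^{\frac{2}{\beta}+\frac{\gamma}{\beta}+\delta}(\T)\bigr)$ with parameter $\beta-\delta\beta/2$, splitting the Duhamel integral at $\omega=\min\{\xi,t\}$ and using only the smoothing bounds of Lemma \ref{EstHolFracHeat} against $\sup_t\norm{f(\cdot,t)}_{C^{\gamma/\beta}(\T)}$. You instead reduce to $\norm{\Delta u(t)}_{C^{\gamma/\beta}}$ via elliptic Schauder, use the identity $\int_0^t\omega^{\beta-1}AE_{\beta,\beta}(-\omega^\beta A)\,d\omega=I-E_\beta(-t^\beta A)$ together with the classical Sinestrari-type subtraction $f(t-\omega)=[f(t-\omega)-f(t)]+f(t)$, and then run the K-method on the commutator $R(t)$ in $(C(\T),C^2(\T))_{\gamma/(2\beta),\infty}$. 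Both the algebraic identity and your $J_1$, $J_2$ computations check out (the exponents $\omega^{-1+\gamma/2}$ and $\omega^{-\beta-1+\gamma/2}$ are correct, and the Mainardi moments you need are finite by Proposition \ref{propM} since $-1+\gamma/(2\beta)>-1$); the conditions $\gamma\neq\beta$ and $\beta>1/2$ enter exactly where you say they do.

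The one substantive discrepancy is in the right-hand side of the final estimate. In your $J_1$ bound you invoke the \emph{temporal} H\"older seminorm, $\norm{f(\cdot,t-\omega)-f(\cdot,t)}_\infty\leq\omega^{\gamma/2}[f]_{C^{\gamma/2}_t}$, so your conclusion reads $\sup_t\norm{u(\cdot,t)}_{C^{2+\gamma/\beta}}\leq C(\norm{f}_{\mathcal{C}^{\gamma/\beta,\gamma/2}(Q)}+\norm{u_0}_{C^{2+\gamma/\beta}})$, whereas Theorem \ref{Schauder} claims the bound with only $\sup_t\norm{f(\cdot,t)}_{C^{\gamma/\beta}(\T)}$. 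This is easy to repair within your own scheme: in $J_1$ replace the crude smoothing $\norm{Ae^{-\sigma A}g}_\infty\leq C\sigma^{-1}\norm{g}_\infty$ by Lemma \ref{EstHolFracHeat} with $\theta_1=\gamma/\beta$, $\theta_2=2$, which gives $\norm{Ae^{-\sigma A}g}_\infty\leq C\sigma^{-1+\gamma/(2\beta)}\norm{g}_{C^{\gamma/\beta}}$ and, after Mainardi averaging, the same integrable kernel $\omega^{-1+\gamma/2}$ multiplied by $\sup_t\norm{f(\cdot,t)}_{C^{\gamma/\beta}}$. Once you do this, the subtraction of $f(t)$ becomes superfluous and your proof collapses to (a shifted version of) the paper's argument, which is precisely why the paper can state the estimate without any temporal regularity of $f$ on the right-hand side.
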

\begin{proof}
The key point is to use Lemma \ref{interpholder}, which gives the identity
\[
C^{2+\frac{\gamma}{\beta}}(\T)=(C^{\frac{\gamma}{\beta}+\delta}(\T),C^{\frac{2}{\beta}+\frac{\gamma}{\beta}+\delta}(\T))_{\beta-\delta\beta/2,\infty}.
\]
We take $\delta<4-2/\beta<2$ (note that $4-2/\beta>0$ since $\beta>1/2$) in order to have that $\beta-\delta\beta/2\in(0,1)$ and $1-\delta/2-1/\beta>-1$. 
We write
\begin{multline*}
u(t)=E_\beta(-t^\beta\Delta)u_0+\int_0^{t}\tau^{\beta-1}E_{\beta,\beta}(-\tau^\beta\Delta)f(t-\tau)d\tau\\
=\int_0^\infty M_\beta(\eta)e^{-(t^\beta\eta-\min\{\xi^\beta\eta, t^\beta\eta\})\Delta}e^{-\min\{\xi^\beta\eta,t^\beta\eta\}\Delta}u_0d\eta\\
+\int_0^{\min\{\xi,t\}}\tau^{\beta-1}E_{\beta,\beta}(-\tau^\beta\Delta)f(t-\tau)d\tau+\int_{\min\{\xi,t\}}^t \tau^{\beta-1} E_{\beta,\beta}(-\tau^\beta\Delta)f(t-\tau)d\tau\\
=c(\xi)+a(\xi)+b(\xi)\ .
\end{multline*}
Then $a(\xi)\in C^{\frac{\gamma}{\beta}+\delta}(\T)$ and $b(\xi),c(\xi)\in C^{\frac{2}{\beta}+\frac{\gamma}{\beta}+\delta}(\T)$. Indeed, by Lemma \ref{EstHolFracHeat}
\begin{multline*}
\|a(\xi)\|_{C^{\frac{\gamma}{\beta}+\delta}(\T)}\leq \beta \int_0^{\min\{\xi,t\}}\tau^{\beta-1}\int_0^\infty M_\beta(\eta)\eta\|e^{-\eta\tau^{\beta}\Delta}f\|_{C^{\frac{\gamma}{\beta}+\delta}(\T)}d\eta d\tau\\
\leq  \beta\int_0^{\min\{\xi,t\}}\tau^{\beta-1}\int_0^\infty M_\beta(\eta)\eta^{1-\delta/2}d\eta\ \tau^{-\delta\beta/2}d\tau\sup_{t\in [0,T]}\|f(\cdot,t)\|_{C^{\frac{\gamma}{\beta}}(\T)}\\
\leq C_1\xi^{\beta-\delta\beta/2}\sup_{t\in [0,T]}\|f(\cdot,t)\|_{C^{\frac{\gamma}{\beta}}(\T)}\ ,
\end{multline*}
where we used that $\int_0^\infty M_\beta(\eta)\eta^{1-\delta/2}d\eta<\infty$ by Proposition \ref{propM} and $1-\delta/2>-1$.
Moreover
\begin{multline*}
\|b(\xi)\|_{C^{\frac{2+\gamma}{\beta}+\delta}(\T)}\leq \beta\int_{\min\{\xi,t\}}^t \tau^{\beta-1}\int_0^\infty M_\beta(\eta)\eta\|e^{-\eta\tau^{\beta}\Delta}f\|_{C^{\frac{2+\gamma}{\beta}+\delta}(\T)}d\eta d\tau\\
\leq  \beta\int_{\min\{\xi,t\}}^t \tau^{\beta-1}\int_0^\infty M_\beta(\eta)\eta^{1-1/\beta-\delta/2}d\eta\ \tau^{-1-\delta\beta/2}d\tau\sup_{t\in[0,T]}\|f(\cdot,t)\|_{C^{\frac{\gamma}{\beta}}(\T)}\\
\leq C\xi^{\beta-1-\delta\beta/2}\sup_{t\in[0,T]}\|f(\cdot,t)\|_{C^{\frac{\gamma}{\beta}}(\T)}\ .
\end{multline*}
Finally
\begin{multline*}
\|c(\xi)\|_{C^{\frac{2+\gamma}{\beta}+\delta}(\T)}\leq \int_0^\infty M_\beta(\eta)\|e^{-\eta\xi^\beta\Delta}u_0\|_{C^{\frac{2}{\beta}+\frac{\gamma}{\beta}+\delta+2-2}(\T)}\\
\leq C\int_0^\infty M_\beta(\eta)\eta^{1-\delta/2-1/\beta}d\eta\|u_0\|_{C^{2+\frac{\gamma}{\beta}}(\T)}\xi^{\beta-1-\delta\beta/2}\leq C\xi^{\beta-1-\delta\beta/2}\|u_0\|_{C^{2+\frac{\gamma}{\beta}}(\T)}.
\end{multline*}
Therefore
\begin{multline*}
\xi^{-(\beta-\delta\beta/2)}K(\xi,u(t),C^{\frac{\gamma}{\beta}+\delta}(\T),C^{\frac{2}{\beta}+\frac{\gamma}{\beta}+\delta}(\T))\leq \xi^{-(\beta-\delta\beta/2)}\{a(\xi)+\xi(b(\xi)+c(\xi))\}\\
\leq C(\sup_{t\in[0,T]}\|f(\cdot,t)\|_{C^{\frac{\gamma}{\beta}}(\T)}+\|u_0\|_{C^{2+\frac{\gamma}{\beta}}(\T)}).
\end{multline*}
\end{proof}

\begin{rem}
One can also study parabolic Schauder estimates for the operator $\partial_{(0,t]}^{\beta} +(-\Delta)^s$ and $\partial_t-\Delta +(-\Delta)^s$. In the second case, one has to use the fact that the semigroup generated by the sum of $-\Delta +(-\Delta)^s$ is the composition of the two semigroups  and get the right decay bounds (see   \cite{CG1} and references therein). 

\end{rem}
\section{On time-fractional Hamilton-Jacobi equation}\label{sec:existence}
In this section, we prove existence and regularity results for the  time-fractional Hamilton-Jacobi equation \eqref{i;1}. In the first part, via nonlinear adjoint method, we get an a priori bound on the gradient of the solution. In the second part, exploiting the previous bound, we prove existence and uniqueness of classical solution to the problem.\\
From now on, we suppose that $H=H(x,p)$ is $C^2(\T\times\R^d)$, $H(x,p)\geq H(x,0)=0$ (if not, one may compensate by adding a positive constant to $V$) and there exist constants $\gamma > 1$ and $c_H,C_H,\tilde{C}_H>0$   such that
\begin{align}
\tag{H1}\label{H1} & D_pH(x,p)\cdot p-H(x,p)\geq C_H|p|^{\gamma}-c_H\ , \\
\tag{H2}\label{H2} & |D_pH(x,p)|\leq C_H|p|^{\gamma-1}+\tilde{C}_H \, \\
\tag{H3}\label{H3} & |D_{xx}^2H(x,p)|\leq C_H|p|^{\gamma}+\tilde{C}_H \ , \\
\tag{H4}\label{H4} & |D_{px}^2H(x,p)|\leq C_H|p|^{\gamma-1}+\tilde{C}_H \ , \\
\tag{H5}\label{H5} & D_{pp}^2H(x,p)\xi\cdot \xi\geq C_H|p|^{\gamma-2}|\xi|^2-\tilde{C}_H
\end{align}
for every $x\in \T$, $p\in\R^d$ and $\xi\in\R^d$. Typical examples of Hamiltonians we have in mind are related to the theory of Mean Field Games, where $H$ is convex and superlinear in the second entry. For instance, $H_1(x,p)=h_1(x)|p|^2$ or $H_2(x,p)=h_2(x)\{(1+|p|^2)^{\frac{\gamma}{2}}-1\}$ with $h_i\in C^2(\T)$, $i=1,2$, fulfill the above assumptions.


\subsection{Well-posedness and regularity results for the time-fractional Fokker-Planck equation}

We will use the following integration by parts formula, whose proof can be found in e.g. \cite[Lemma 2.8]{LinLiu} 
\begin{lemma}\label{intparts}
Let $u,v\in C^1([0,\tau])$. Then
\[
\int_0^\tau(\partial_{(0,t]}^\beta u(t))(v(t)-v(\tau))\,dt=\int_0^\tau (\partial_{[t,\tau)}^\beta v(t))(u(t)-u(0))\,dt
\]
In particular, this is equivalent to
\[
\int_0^\tau v(t)\partial_{(0,t]}^\beta u(t)\,dt+u(0) (I^{1-\beta}_{[0,\tau)}v) (0)=\int_0^\tau u(t)\partial_{[t,\tau)}^\beta v(t)\,dt+v(\tau) (I^{1-\beta}_{(0,\tau]}u) (\tau)\ ,
\]
where
\begin{align*}
(I^{1-\beta}_{(0,t]} u) ( t)=\frac{1}{\Gamma(1-\beta)}\int_0^{t}\frac{u(s)}{(t-s)^\beta}ds,\\
(I^{1-\beta}_{(t,\tau]} v) ( t)=\frac{1}{\Gamma(1-\beta)}\int_t^{\tau}\frac{v(s)}{(s-t)^\beta}ds,
\end{align*}
are the forward and backward Riemann-Liouville integrals. The integration by parts formula remains true if $u\in L^1_{loc}(0,\tau)$ for $v\in C_c^\infty((-\infty,\tau))$.
\end{lemma}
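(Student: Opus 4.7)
The plan is to prove the identity by an explicit calculation relying only on Fubini--Tonelli and a single classical integration by parts. Concretely, I would expand both Caputo derivatives via their integral definitions in terms of $u'$ and $v'$, swap orders of integration on the appropriate triangles, and reduce both sides of the claimed equality to the same symmetric double integral in $u'$ and $v'$.

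For the first identity, the left-hand side equals
\[
\frac{1}{\Gamma(1-\beta)}\int_0^\tau (v(t)-v(\tau))\int_0^t\frac{u'(s)}{(t-s)^\beta}\,ds\,dt,
\]
and Fubini on $\{0\le s\le t\le \tau\}$ brings $u'(s)$ outside. Writing $v(t)-v(\tau)=-\int_t^\tau v'(r)\,dr$ and applying Fubini once more on $\{s\le t\le r\le \tau\}$, the inner $t$-integral evaluates to $(r-s)^{1-\beta}/(1-\beta)$, so the left-hand side reduces to
\[
-\frac{1}{\Gamma(2-\beta)}\int_0^\tau u'(s)\int_s^\tau v'(r)(r-s)^{1-\beta}\,dr\,ds.
\]
The analogous sequence applied to the right-hand side --- unfold $\partial_{[t,\tau)}^\beta v$, swap order on $\{0\le t\le s\le \tau\}$, then write $u(t)-u(0)=\int_0^t u'(r)\,dr$ and swap once more on $\{0\le r\le t\le s\}$ --- produces the same expression after relabelling $(r,s)\leftrightarrow (s,r)$, establishing the first identity.

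For the equivalent second form I would apply Fubini only to $\int_0^\tau v(t)\partial_{(0,t]}^\beta u(t)\,dt$, obtaining $\int_0^\tau u'(s)(I^{1-\beta}_{(s,\tau]}v)(s)\,ds$, and then perform one classical integration by parts in $s$. The key computation is
\[
\frac{d}{ds}(I^{1-\beta}_{(s,\tau]}v)(s)=-\frac{v(\tau)}{\Gamma(1-\beta)(\tau-s)^\beta}-\partial_{[s,\tau)}^\beta v(s),
\]
obtained via the change of variable $r-s\mapsto \eta$ inside the integral followed by differentiation under the integral sign. The boundary term at $s=\tau$ vanishes because $(I^{1-\beta}_{(s,\tau]}v)(s)\to 0$ as $s\to\tau^-$, whereas the contributions at $s=0$ and from the $v(\tau)$ summand of the derivative reproduce exactly the two boundary quantities $-u(0)(I^{1-\beta}_{[0,\tau)}v)(0)$ and $v(\tau)(I^{1-\beta}_{(0,\tau]}u)(\tau)$ appearing in the statement; rearranging gives the claim.

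The main technical burden is purely bookkeeping: each Fubini interchange must be justified (the integrands are $L^1$ on the relevant triangles since $\beta\in(0,1)$ makes $(t-s)^{-\beta}$ locally integrable and $u',v'$ are continuous on $[0,\tau]$), and the signs must be tracked carefully through the two-step reduction. The final extension to $u\in L^1_{\mathrm{loc}}(0,\tau)$ with $v\in C_c^\infty((-\infty,\tau))$ then follows by a density argument: regularise $u$ by convolution with a standard mollifier, apply the identity to the smooth approximants, and pass to the limit, using that the compact support of $v$ and $v'$ away from $t=\tau$ keeps every kernel $(s-t)^{-\beta}$ bounded and integrable against the limiting $u$.
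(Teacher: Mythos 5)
Your argument is correct, but it takes a more self-contained route than the paper. For the first identity the paper simply cites \cite[Lemma 2.8 and Remark 2.6]{LinLiu}, whereas you reprove it from scratch: unfolding both Caputo derivatives and applying Fubini twice on the relevant triangles does reduce both sides to the common symmetric expression $-\Gamma(2-\beta)^{-1}\iint_{\{s\le r\}}u'(s)v'(r)(r-s)^{1-\beta}\,dr\,ds$, and the interchanges are justified exactly as you say since $u',v'$ are continuous and $(t-s)^{-\beta}$ is locally integrable for $\beta\in(0,1)$. For the second identity the two proofs essentially coincide in substance: the paper invokes the relations $\partial_{(0,t]}^\beta u=\frac{d}{dt}[I^{1-\beta}_{(0,t]}u]-\frac{u(0)}{t^\beta\Gamma(1-\beta)}$ and $\partial_{[t,\tau)}^\beta v=-\frac{d}{dt}[I^{1-\beta}_{[t,\tau)}v]-\frac{v(\tau)}{(\tau-t)^\beta\Gamma(1-\beta)}$, and your computation of $\frac{d}{ds}(I^{1-\beta}_{(s,\tau]}v)(s)$ via the substitution $r-s\mapsto\eta$ is precisely the second of these; your bookkeeping of the boundary terms (vanishing at $s=\tau$ because $(I^{1-\beta}_{(s,\tau]}v)(s)=O((\tau-s)^{1-\beta})$, and the $v(\tau)$-summand regenerating $v(\tau)(I^{1-\beta}_{(0,\tau]}u)(\tau)$) is right. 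What your version buys is independence from the external reference; what the paper's version buys is brevity and a cleaner passage to the weak setting. On that last point your sketch is the only place that is thin: for $u\in L^1_{\mathrm{loc}}(0,\tau)$ neither $\partial_{(0,t]}^\beta u$ nor $u(0)$ has a classical meaning, so before mollifying you must say in what sense the left-hand side is interpreted — in \cite{LinLiu} the identity against test functions $v\in C_c^\infty((-\infty,\tau))$ (for which $v(\tau)=0$ and $\partial_{[t,\tau)}^\beta v$ vanishes near $\tau$) is essentially the \emph{definition} of the weak Caputo derivative with assigned initial value, so the ``extension'' is a consistency statement rather than a limit of the smooth case; if you do want to run the mollification you also need to specify the topology in which $\partial_{(0,t]}^\beta u_\varepsilon$ converges. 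This does not affect the main content of the lemma as used in the paper.
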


\begin{proof}
The first identity is proved in \cite[Lemma 2.8 and Remark 2.6]{LinLiu}. The second one is a consequence of the equalities
\[
\partial_{(0,t]}^\beta u(t)=\frac{d}{dt}\left[I^{1-\beta}_{(0,t]} u(t)\right]-\frac{u(0)}{t^\beta\Gamma(1-\beta)}\ ,
\]
\[
\partial_{[t,\tau)}^\beta v(t)=-\frac{d}{dt}\left[I^{1-\beta}_{[t,\tau)}v(t)\right]-\frac{v(\tau)}{(\tau-t)^\beta\Gamma(1-\beta)}\ .
\]
\end{proof}
We consider now the (backward) time-fractional Fokker-Planck equation
\begin{equation}\label{adjoint}
\begin{cases}
\partial_{[t,\tau)}^\beta\rho-\sigma\Delta \rho-\mathrm{div}(b(x,t)\rho)=0&\text{ in }Q_\tau:=\T\times(0,\tau)\ ,\\
\rho(x,\tau)=\rho_\tau(x)&\text{ in }\T,
\end{cases}
\end{equation}
where $\sigma$ is a positive constant and
\[
\partial_{[t,\tau)}^\beta \rho(x,t)=-\frac{1}{\Gamma(1-\beta)}\int_t^\tau\frac{ \partial_s\rho(x,s)}{(s-t)^\beta}\,ds ,
\]
is the backward Caputo derivative, $\tau<T$. We first premise the following 
\begin{defn}
Let $\beta\in(0,1)$ and $b\in L^\infty(Q_\tau;\R^d)$. We say that $\rho\in L^2(0,\tau;H^1(\T))$ with $\partial_{[t,\tau)}^\beta \rho\in L^2(0,\tau;H^{-1}(\T))$ is a weak solution to \eqref{adjoint} in the sense that
\[
\int_0^\tau\int_\T\rho \partial_{(0,t]}^{\beta} \varphi+\sigma \rho\Delta \varphi-b\cdot D\varphi\rho\,dxdt=\int_0^\tau\int_\T\rho_\tau\partial_{(0,t]}^{\beta} \varphi\,dxdt\ .
\]
for every $\varphi\in C^\infty(\T\times(0,\tau])$.
\end{defn}
We recall that it is possible to define a weak notion of Caputo-time derivative as in \cite{LinLiu}. Note also that compactly supported test functions are enough by density arguments. In particular, it holds
\[
\int_0^\tau \langle \partial_{[t,\tau)}^\beta\rho,\varphi\rangle_{H^{-1}(\T),H^1(\T)} +\int_0^\tau\int_\T \sigma D\rho\cdot D\varphi-b\cdot D\varphi\rho\,dxdt=0,
\]
for every $\varphi\in L^2(0,\tau;H^1(\T))$, $\langle \cdot,\cdot \rangle_{H^{-1}(\T),H^1(\T)}$ being the duality between $H^1(\T)$ and $H^{-1}(\T)$. We emphasize that the requirement $\partial_{[t,\tau)}^\beta \rho\in L^2(0,\tau;H^{-1}(\T))$ guarantees that $\rho$ is continuous at the terminal time whenever $\beta>1/2$, so that the left limit $\rho(x,\tau^-)=\rho_\tau(x)$ is well-defined, see e.g. \cite[Lemma 3.1-(iii)]{LinLiu}. Note that this restriction on $\beta$ will be needed even in the context of the Hamilton-Jacobi equation later on.

We prove the following existence result (a similar result appeared in \cite[Theorem 2.8]{TangDCDS}.
\begin{prop}\label{well}
Let $b\in L^\infty(Q_T;\R^d)$, $\rho_\tau\in L^2(\T)$ with $\rho_\tau\geq0$ and
\[
W=\{\rho\in L^2(0,\tau;H^1(\T)),\,\partial_{[t,\tau)}^\beta \rho\in L^2(0,\tau;H^{-1}(\T)).
\}
\]
Then, there exists a unique weak solution to \eqref{adjoint} which belongs to $W
$. In particular, we also have $\rho\geq0$ a.e. on $Q_\tau$ and $\|\rho(t)\|_{L^1(\T)}=\|\rho_\tau\|_{L^1(\T)}$ for a.e. $t\in[0,\tau)$. 
\end{prop}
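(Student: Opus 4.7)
The strategy is to convert the backward problem into a forward time-fractional Fokker--Planck equation by time reversal, solve it by Galerkin approximation, and then recover the qualitative properties by energy estimates. Setting $s = \tau - t$, $\tilde\rho(x,s) = \rho(x,\tau-s)$ and $\tilde b(x,s) = b(x,\tau-s)$, a direct change of variables shows that $\partial^\beta_{[t,\tau)}\rho$ transforms into the forward Caputo derivative $\partial^\beta_{(0,s]}\tilde\rho$, so the problem becomes
\begin{equation*}
\partial^\beta_{(0,s]}\tilde\rho - \sigma\Delta\tilde\rho - \mathrm{div}(\tilde b\,\tilde\rho) = 0 \quad\text{in } \T\times(0,\tau), \qquad \tilde\rho(\cdot,0) = \rho_\tau.
\end{equation*}

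For existence, I would set up a Galerkin scheme in the Fourier basis $\{e^{2\pi i k\cdot x}\}_{k\in\Z^d}$ of $L^2(\T)$. The finite-dimensional projections $\tilde\rho_N$ satisfy a system of fractional ODEs with bounded, measurable coefficients, which is uniquely solvable globally by the Volterra fixed-point argument recalled in Section~\ref{sec;heat_eq}. To pass to the limit I need uniform estimates, and the key tool here is the Alikhanov/Kato-type inequality
\begin{equation*}
\tilde\rho\,\partial^\beta_{(0,s]}\tilde\rho \;\geq\; \tfrac{1}{2}\,\partial^\beta_{(0,s]}(\tilde\rho^{\,2}),
\end{equation*}
valid for the Caputo derivative. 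Testing the approximate equation with $\tilde\rho_N$, applying this inequality, and using Cauchy--Schwarz and Young on the drift term $\int \tilde b\cdot\nabla\tilde\rho_N\,\tilde\rho_N$ produces
\begin{equation*}
\partial^\beta_{(0,s]}\|\tilde\rho_N(s)\|_{L^2(\T)}^2 + \sigma\|\nabla\tilde\rho_N(s)\|_{L^2(\T)}^2 \;\leq\; C(\sigma,\|b\|_\infty)\,\|\tilde\rho_N(s)\|_{L^2(\T)}^2.
\end{equation*}
A fractional Grönwall inequality (cf.\ \cite{Vergara}) then yields uniform bounds in $L^\infty(0,\tau;L^2(\T))\cap L^2(0,\tau;H^1(\T))$, and testing against $H^1$-functions gives the bound for $\partial^\beta_{(0,s]}\tilde\rho_N$ in $L^2(0,\tau;H^{-1}(\T))$. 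Weak compactness provides a limit $\tilde\rho\in W$, and linearity together with the smoothness of the test functions allows to pass to the limit in the weak formulation (the equivalence of the forward weak formulation with the backward one stated in the paper is verified using Lemma~\ref{intparts}). Uniqueness follows from the same energy estimate applied to the difference of two solutions with $\rho_\tau=0$.

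For non-negativity, I would test the forward equation with a smooth approximation of $-\tilde\rho^-$ and invoke the fractional chain rule $\partial^\beta_{(0,s]}\bigl((\tilde\rho^-)^2\bigr)\leq -2\,\tilde\rho^-\partial^\beta_{(0,s]}\tilde\rho$, obtaining a Grönwall-type inequality for $\|\tilde\rho^-(s)\|_{L^2}^2$ that starts from zero and hence forces $\tilde\rho^-\equiv 0$. For mass conservation, I would plug $\varphi(x,t)=\psi(t)$ with $\psi\in C^\infty_c((-\infty,\tau))$ into the weak formulation: both the Laplacian and the divergence terms vanish by periodicity, leaving $\int_0^\tau\bigl(\int_\T\rho\,dx-\int_\T\rho_\tau\,dx\bigr)\partial^\beta_{(0,t]}\psi\,dt=0$, whence $t\mapsto\int_\T\rho(x,t)\,dx$ agrees with $\|\rho_\tau\|_{L^1(\T)}$ for a.e.\ $t$. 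Combined with $\rho\geq 0$, this gives the $L^1$-norm preservation.

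The main obstacle I expect is rigorously justifying the Alikhanov/Kato-type fractional inequalities at the level of weak solutions, since $\tilde\rho$ is not smooth enough for pointwise manipulations of $\partial^\beta_{(0,s]}$. The standard cure is to perform these computations at the Galerkin level (where the solutions are smooth in time and finite-dimensional in space, so $\partial^\beta_{(0,s]}(\tilde\rho_N^2)$ makes classical sense) and then pass to the limit by weak/weak-* lower semicontinuity of the involved convex norms; an additional regularisation by convolution with the kernel $g_{1-\beta}$ may be required in the non-negativity argument to legitimately test with a Lipschitz truncation of $\tilde\rho^-$.
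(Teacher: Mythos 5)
Your proposal is correct and arrives at the same a priori estimates as the paper --- both arguments hinge on the convexity (Alikhanov-type) inequality $\rho\,\partial^\beta\rho\geq\tfrac12\partial^\beta(\rho^2)$ and the resulting $L^\infty(L^2)\cap L^2(H^1)$ energy bound --- but the existence and uniqueness mechanisms are genuinely different. The paper does not reverse time: it keeps the backward formulation, derives the energy estimate by testing with $\rho$ and invoking \cite[Lemmas 2.3--2.4]{LinLiu}, and then obtains existence via the Leray--Schauder fixed point theorem applied to the map $m\mapsto\rho$ solving the parametrized problem $\partial^\beta_{[t,\tau)}\rho-\Delta\rho=\zeta\,\mathrm{div}(b\,m)$, with compactness supplied by the Aubin--Lions-type criterion of \cite[Theorem 4.2]{LinLiu} and a separate argument for the strong convergence of the gradients; uniqueness is obtained by duality against the adjoint equation, citing \cite{Zacherabstract}. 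Your Galerkin scheme is arguably more economical for this linear problem: it bypasses the compactness and strong-gradient-convergence steps that occupy most of the paper's proof, and it provides a setting in which the fractional chain-rule manipulations are legitimate (finite-dimensional, hence sufficiently regular in time once $b$ is mollified), which is precisely the obstacle you correctly identify. Two points you should make explicit: (i) your uniqueness-by-energy argument needs the integration by parts of Lemma \ref{intparts} and the convexity inequality to hold for functions merely in $W$, which is exactly what \cite[Lemma 2.4]{LinLiu} provides, so the step is fine but should be referenced; (ii) in the mass-conservation argument, from $\int_0^\tau\bigl(m(t)-m_\tau\bigr)\partial^\beta_{(0,t]}\psi\,dt=0$ for all test functions $\psi$ you still need the injectivity of the Riemann--Liouville fractional integral (equivalently, that the backward fractional integral of $m-m_\tau$ is constant and tends to zero at $t=\tau$) to conclude $m\equiv m_\tau$; this is a one-line addition but not automatic. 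Finally, note that you actually prove the nonnegativity and mass-conservation assertions, which the paper states in the proposition but does not argue in its written proof --- a net gain of your route.
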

\begin{proof}
We set $\sigma=1$ for simplicity.  Using the convexity of the $L^2$-norm one has
\begin{equation}\label{claim}
\frac12 \partial_{[t,\tau)}^\beta\rho^2(x,t)\leq \partial_{[t,\tau)}^\beta\rho(x,t) \rho(x,t)
\end{equation}
for a.e. $(x,t)\in Q_\tau$ (see e.g. \cite[Lemma 2.4]{LinLiu}).
We can now use $\varphi=\rho$ as a test function in the weak formulation, \eqref{claim}, Lemma \ref{intparts} and Young's inequality to conclude
\[
\frac12 \partial_{[t,\tau)}^\beta\|\rho(t)\|_{L^2(\T)}^2+ \int_\T |D\rho|^2\,dx
\leq \frac{1}{2}\int_\T |D\rho|^2\,dx+C\|\rho_\tau\|_{L^2(\T)}\ ,
\]
where we also used that $\|\rho(s)\|_{L^2(\T)}\leq C\|\rho_\tau\|_{L^2(\T)}$ for a.e. $s\in(0,\tau)$ (these are classical estimates that can be obtained, for instance, using the Duhamel-like formula \eqref{duhamel} for suitably regular drift vector fields, see also \cite[Lemma 2.1]{TangDCDS}),
giving in particular
\begin{equation}\label{en}
\frac12 \partial_{[t,\tau)}^\beta\|\rho(t)\|_{L^2(\T)}^2+\frac{1}{2}\|D\rho\|_{L^2(\T)}^2\leq C\|\rho_\tau\|_{L^2(\T)}
\end{equation}
for some positive constant $C>0$. 
Using \cite[Lemma 2.3]{LinLiu} in \eqref{en}, one obtains the non-local control
\[
\sup_{t\in(0,\tau)}\frac{1}{\Gamma(\beta)}\int_0^t(t-s)^{1-\beta}(\|\rho(s)\|_{H^1(\T)}^2)\,ds\leq C\ .
\]
This yields an a priori estimate of $\rho\in L^2(H^1)$ and $\sup_{t\in(0,\tau)}I^{1-\beta}_{(0,t]}(\|\rho(t)\|_{H^1(\T)}^2)$ in terms of the data. Using these bounds, the estimate of $\partial_{[t,\tau)}^\beta\rho$ in $L^2(H^{-1})$ follows immediately by duality. \\
To prove the existence, we can argue using e.g. the Leray-Schauder fixed point theorem on the space $W$, which are Banach spaces due to \cite{KKL}. To this aim, consider the map $G:W\times[0,1]\to W$ defined by $m\longmapsto \rho=G[m;\zeta]$ as the solution to the parametrized problem
\[
\partial_{[t,\tau)}^\beta\rho-\Delta \rho=\zeta\mathrm{div}(b(x,t)m)\ \text{ in }Q_\tau\ ,\rho(x,\tau)=\zeta\rho_\tau ,\text{ in }\T\ .
\]
We note that $G[m;0]=0$ by standard results for the time-fractional heat equation. To show that the map is well-defined, one can argue as in the previous procedure by testing the equation against $\rho$ itself (see also Remark \ref{incrreg}). Furthermore, if $\rho\in W$ and $\zeta\in[0,1]$ satisfy $\rho=G[\rho;\zeta]$, then the a priori estimates carry through uniformly in $\zeta$, giving $\|\rho\|_W\leq C$.\\
The compactness of the map $G$ follows by using the compactness criteria in \cite{LinLiu}. More precisely, let $m_n\in W$ be a bounded sequence such that $\rho_n=G[m_n;\zeta]$. Using the strategy outlined above, we obtain that $\rho_n$ is bounded in $W$ and the nonlocal control
\[
\sup_{t\in(0,\tau)}\frac{1}{\Gamma(\beta)}\int_0^t(t-s)^{1-\beta}(\|\rho_n(s)\|_{H^1(\T)}^2)\,ds\leq C\ .
\]
By \cite[Theorem 4.2]{LinLiu} (with $M=H^1(\T)$, $B=L^2(\T)$ and $Y=H^{-1}(\T)$) we conclude the strong convergence of $\rho_n$ to $\rho$ in $L^2(Q_\tau)$. Thus, without relabeling the index, we call $\rho_n$ the (sub)sequence which converges strongly to $\rho$ in $L^2(Q_\tau)$ and such that $D\rho_n$ converges weakly to $D\rho$ in $L^2(Q_\tau)$. We take $\varphi=\rho_n-\rho$ as a test function in the weak formulation of the equation satisfied by $\rho_n$ to get
\[
\int_0^\tau\int_\T\rho_n \partial_{(0,t]}^{\beta} (\rho_n-\rho)+D\rho\cdot D \varphi-b\cdot D(\rho_n-\rho)m_n\,dxdt=\int_0^\tau\int_\T\rho_\tau\partial_{(0,t]}^{\beta}(\rho_n-\rho)\,dxdt\ .
\]
and then, using \eqref{claim}, we get
\begin{multline*}
\int_0^\tau\int_\T \partial_{(0,t]}^{\beta}\frac12 \|(\rho_n-\rho)(t)\|_{L^2(\T)}^2\,dxdt+\int_0^\tau\int_\T|D(\rho_n-\rho)|^2\,dxdt\leq-\int_0^\tau\int_\T \rho \partial_{(0,t]}^{\beta}(\rho_n-\rho)\,dxdt\\
- \int_0^\tau\int_\T D\rho\cdot D(\rho_n-\rho)\,dxdt+\int_0^\tau\int_\T\rho_\tau(x)\partial_{(0,t]}^{\beta}(\rho_n-\rho)\,dxdt+\int_0^\tau\int_\T b\cdot D(\rho_n-\rho)m_n\,dxdt\\
=(I)+(II)+(III)+(IV)
\end{multline*}
Note that  (II)-(IV) converge to 0 using the strong convergence of $\rho_n$ to $\rho$ in $L^2(Q_\tau)$ and the weak convergence of $D\rho_n$ to $D\rho$ in $L^2(Q_\tau)$. Moreover, (I)-(III) converge to 0 integrating in time $\partial_{(0,t]}^{\beta}(\rho_n-\rho)$, exploiting the strong convergence of $\rho_n$ to $\rho$ in $L^2(Q_\tau)$ and the fact that $\rho_\tau\in L^2(\T)$. Using again \cite[Lemma 2.3]{LinLiu}, we observe that
\[
\frac{1}{\Gamma(\beta)}\int_0^t(t-s)^{1-\beta}(\|D(\rho_n-\rho)(s)\|^2_{L^2(\T)})\,ds\to 0 
\]
as $n\to\infty$, giving the strong convergence of $D\rho_n$ to $D\rho$ in $L^2(Q_\tau)$. The convergence of the fractional time derivative follows by duality.


The uniqueness of solutions can be obtained, for instance, using the existence for the dual equation, which holds true due to the boundedness of the drift vector field (see e.g. \cite{Zacherabstract}) .

\end{proof}
\begin{rem}\label{incrreg} In particular, it can be proved by similar arguments to those used in \cite{CT} that $\rho$ enjoys better regularity properties, e.g. in the space of functions \[
\{\rho\in L^p(0,\tau;W^{1,p}(\T))\ ,\partial_{[t,\tau)}^\beta \rho\in L^p(0,\tau;W^{-1,p}(\T))\}
\]
for $p>1$. We sketch the proof here for reader's convenience. 
By the definition of weak solution, we have
\begin{multline}\label{weakid}
\left|\iint_{Q_\tau} \rho (\partial_{(0,t]}^{\beta} \varphi-\Delta\varphi)\,dxdt\right|\leq \left|\iint_{Q_\tau}\rho_\tau\partial_{(0,t]}^{\beta} \varphi\,dxdt\right|\\
+\iint_{Q_\tau} \rho |b| |D\varphi|\,dxdt\leq \left|\iint_{Q_\tau}\rho_\tau\partial_{(0,t]}^{\beta} \varphi\,dxdt\right|+\|b\|_{L^\infty(Q_\tau)}\|D\varphi\|_{L^{p'}(Q_\tau)}\|\rho\|_{L^p(Q_\tau)}.
\end{multline}
For $i=1,...,d$ consider the strong solution to the following inhomogeneous time-fractional heat equation
\begin{equation}\label{aux}
\begin{cases}
\partial_{(0,t]}^{\beta}\psi-\Delta\psi =|\partial_{x_i}\rho|^{p-2}\partial_{x_i}\rho &\text{ in }Q_\tau:=\T\times(0,\tau)\ ,\\
\psi(x,\tau)=0&\text{ in }\T\ .
\end{cases}
\end{equation}
By $L^p$ maximal regularity for abstract evolution equations (cf Theorem \ref{thm;maxreg}) we get
\[
\|\psi\|_{\Vb_{p'}^2(Q_\tau)}\leq \tilde C\||\partial_{x_i}\rho|^{p-2}\partial_{x_i}\rho\|_{L^{p'}(Q_\tau)}=\tilde C\|\partial_{x_i}\rho\|_{L^p(Q_\tau)}^{p-1}\ .
\]
A straightforward application of the H\"older's inequality yields
\[
\left| \iint_{Q_\tau}\partial_{x_i}\rho_\tau\partial_{(0,t]}^{\beta}\psi\,dxdt\right|\leq \tau^\frac1p\|\partial_{x_i}\rho_\tau\|_{L^p(\T)}\|\partial_{(0,t]}^{\beta}\psi\|_{L^{p'}(Q_\tau)}\\
\leq \tau^\frac1p\|\partial_{x_i}\rho_\tau\|_{L^p(\T)}\|\psi\|_{\Vb_{p'}^2(Q_\tau)}.
\]
We take $\varphi=\partial_{x_i}\psi$ in \eqref{weakid} and, after integrating by parts, we have
\begin{multline*}
\left|\iint_{Q_\tau}\partial_{x_i}\rho(\partial_{(0,t]}^{\beta}\psi-\Delta\psi)\,dxdt\right|\leq  \left|\iint_{Q_\tau}\partial_{x_i}\rho_\tau\partial_{(0,t]}^{\beta} \psi\,dxdt\right|+\|b\|_{L^\infty(Q_\tau)}\|D(\partial_{x_i}\psi)\|_{L^{p'}(Q_\tau)}\|\rho\|_{L^p(Q_\tau)}\\
\leq \tau^\frac1p\|D\rho_\tau\|_{L^p(\T)}\|\psi\|_{\Vb_{p'}^2(Q_\tau)}+\|b\|_{L^\infty(Q_\tau)}\|\psi\|_{\Vb_{p'}^2(Q_\tau)}\|\rho\|_{L^p(Q_\tau)}
\leq C\|\psi\|_{\Vb_{p'}^2(Q_\tau)}.
\end{multline*}
Using the equation satisfied by $\psi$ we find
\begin{multline*}
\iint_{Q_\tau}|\partial_{x_i}\rho|^p\,dxdt=\iint_{Q_\tau} \partial_{x_i}\rho |\partial_{x_i}\rho|^{p-2}\partial_{x_i}\rho\,dxdt\\
=\left|\iint_{Q_\tau}\partial_{x_i}\rho(\partial_{(0,t]}^{\beta}\psi-\Delta\psi )\,dxdt\right|\leq C\|\psi\|_{\Vb_{p'}^2(Q_\tau)}\leq C\tilde C\|\partial_{x_i}\rho\|_{L^p(Q_\tau)}^{p-1}
\end{multline*}
giving thus the estimate on $D\rho\in L^p(Q_\tau)$. As a byproduct, we conclude by Poincar\`e inequality
\[
\|\rho\|_{L^p(Q_\tau)}\leq \|\rho-1\|_{L^p(Q_\tau}+\tau\leq C_1\|D\rho\|_{L^p(Q_\tau)}+\tau
\]
which can be bounded in terms of the data by the above estimates. The bound on the Caputo derivative follows by duality and using the equation.\\
We remark in passing that such strategy can be implemented even when low regularity information on the drift against the density $\rho$ are available,  namely when $b\in L^k(\rho)$ for some $k>1$, see e.g. \cite{CT,CG2,MPR}.\\
Finally, when the drift $b$ is smooth and $\rho\in L^p(W^{1,p})$ for every $p>1$, one achieves classical regularity by using maximal regularity results and implementing bootstrapping procedures.
\end{rem}

\subsection{Gradient bound}
We consider the time-fractional Hamilton-Jacobi equation
\begin{equation}\label{xfracHamilton-Jacobisemi}
\begin{cases}
\ds \partial_{(0,t]}^\beta u(x,t) -\sigma\Delta u+ H(x, Du(x,t)) = V(x,t) & \text{in $Q_T$,} \\
u(x, 0) = u_0(x) & \text{in $\T$,}
\end{cases}
\end{equation}
where $\sigma$ is a positive constant. In this section, we prove 
gradient bound for classical solution to \eqref{xfracHamilton-Jacobisemi}.
The method implemented here is based on the so-called nonlinear adjoint method (see \cite{CG2,gomesbook} and \cite[Proposition 3.6]{CG1}), that is on testing the Hamilton-Jacobi equation against the (classical) solution to \eqref{adjoint} with optimal drift $b(x,t):=-D_pH(x,Du(x,t)$, i.e.
\begin{equation}\label{adjointmfg}
\begin{cases}
\partial_{[t,\tau)}^\beta\rho-\sigma\Delta \rho-\mathrm{div}(D_pH(x,Du)\rho)=0&\text{ in }Q_\tau:=\T\times(0,\tau)\ ,\\
\rho(x,\tau)=\rho_\tau(x)&\text{ in }\T,
\end{cases}
\end{equation}
where
$\tau<T$. Here, we assume $\rho_\tau\in C^\infty(\T)$ with $\rho_\tau\geq0$ and $\int_{\T}\rho_\tau(s)ds=1$. Under these assumptions, the terminal data $\rho_\tau$ should be thought as an item of a sequence approximating a singular terminal data (see e.g. \cite{Evans}). Time-fractional Fokker-Planck equations has been recently analyzed in \cite{KZ}. We first recall  an a priori bound on the sup-norm for linear time-fractional PDEs.
\begin{lemma}\label{mp}
Let $u$ be a classical solution to
\begin{equation}\label{gb;1}
\begin{cases}
 \partial_{(0,t]}^\beta u-\sigma\Delta u+b(x,t)\cdot Du= F(x,t) & \text{in $Q_T$,} \\
u(x, 0) = u_0(x) & \text{in $\T$,}
\end{cases}
\end{equation}
with $b\in C(Q_T;\R^d)$, $F\in C(Q_T)$ and $u_0\in C(\T)$. Then, it holds
\[
\|u\|_{\infty;Q_T}\leq \|u_0\|_{\infty,\T}+\frac{T^\beta}{\Gamma(1+\beta)}\|F\|_{\infty;Q_T}
\]
\end{lemma}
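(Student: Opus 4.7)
The plan is a barrier--comparison argument resting on two ingredients: an explicit computation of the Caputo derivative of a power of $t$, and a Caputo extremum principle. I would treat the upper bound on $u$ first; the lower bound follows by applying the same reasoning to $-u$, which solves the analogous equation with data $-F$ and initial datum $-u_0$.

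First I would introduce, for $\varepsilon>0$, the pure-time barrier
\[
\varphi_\varepsilon(t) := \|u_0\|_{\infty,\T} + \varepsilon + (\|F\|_{\infty;Q_T}+\varepsilon)\,\frac{t^\beta}{\Gamma(1+\beta)}.
\]
The choice is dictated by the elementary identity $\partial^\beta_{(0,t]}(t^\beta/\Gamma(1+\beta)) = 1$, which follows from the definition of the Caputo derivative together with the Beta--function evaluation $B(\beta,1-\beta)=\Gamma(\beta)\Gamma(1-\beta)$; thus $\partial^\beta_{(0,t]}\varphi_\varepsilon \equiv \|F\|_{\infty;Q_T}+\varepsilon$. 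Setting $v := u - \varphi_\varepsilon$, the function $v$ satisfies
\[
\partial^\beta_{(0,t]} v - \sigma\Delta v + b\cdot Dv = F - \|F\|_{\infty;Q_T} - \varepsilon < 0 \quad\text{in }Q_T,
\]
with $v(\cdot,0)\le -\varepsilon < 0$ on $\T$. I would then argue by contradiction: since $v$ is continuous on the compact set $\T\times[0,T]$, it attains its maximum at some $(x_0,t_0)$, and if this maximum were non-negative the strict negativity of $v(\cdot,0)$ would force $t_0>0$. At such a space-time maximum $Dv(x_0,t_0)=0$, $-\sigma\Delta v(x_0,t_0)\ge 0$, and the Caputo extremum principle yields $\partial^\beta_{(0,t]}v(x_0,\cdot)(t_0)\ge 0$; adding these three estimates contradicts the strict inequality above, so $v<0$ throughout $Q_T$. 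Sending $\varepsilon\to 0^+$ and bounding $t^\beta\le T^\beta$ yields the stated upper bound, and the analogous argument applied to $-u$ completes the proof.

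The only step beyond routine bookkeeping is the Caputo extremum principle. In the classical parabolic case ($\beta=1$) one invokes $\partial_t v(x_0,t_0)\ge 0$ at an interior-in-time maximum, but the Caputo derivative carries a memory kernel and its sign at a maximum is not obvious a priori. I would quote Luchko's pointwise bound
\[
\partial^\beta_{(0,t]} w(t_0) \;\ge\; \frac{w(t_0)-w(0)}{\Gamma(1-\beta)\,t_0^\beta} \;\ge\; 0,
\]
valid for sufficiently regular $w$ attaining its maximum on $[0,t_0]$ at $t_0$ and proven by integration by parts in the defining Riemann--Liouville integral (see \cite{Luchko}). Once this tool is in hand everything else reduces to continuous dependence on the parameter $\varepsilon$.
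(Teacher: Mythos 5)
Your proof is correct and follows essentially the same route as the paper: both subtract the barrier $\frac{M}{\Gamma(1+\beta)}t^{\beta}$ (using $\partial_{(0,t]}^{\beta}t^{\beta}=\Gamma(1+\beta)$) and then invoke Luchko's maximum principle for the resulting equation with nonpositive right-hand side. The only difference is that the paper cites Luchko's Theorem 2 as a black box, whereas you unpack it into the standard $\varepsilon$-perturbation and interior-maximum contradiction based on Luchko's pointwise extremum inequality for the Caputo derivative, which is a legitimate (and more self-contained) way to carry out the same step.
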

\begin{proof}
The idea is to use the same procedure as in \cite[Theorem 4]{Luchko}. Consider the function
\[
w(x,t)=u(x,t)-\frac{M}{\Gamma(1+\beta)}t^{\beta}
\]
with $M:=\|F\|_{\infty;Q_T}$ and note that it is a classical solution to
\[
\partial_{(0,t]}^\beta w-\sigma\Delta w+b\cdot Dw=F_1,
\]
where $F_1(x,t)=F(x,t)-M$, since it holds
\[
\partial_{(0,t]}^\beta t^\gamma=\frac{\Gamma(1+\gamma)}{\Gamma(1-\beta+\gamma)}t^{\gamma-\beta}\ ,\gamma\in\R\ ,\gamma>0\ .
\]
(see \cite[eq. (A.15)]{TaylorNote}). In particular, note that $F_1\leq 0$ and, by the maximum principle (see e.g. \cite[Theorem 2]{Luchko}, where similar arguments can be used for the case at hand of the drift-diffusion operator $\partial_{(0,t]}^\beta-\sigma\Delta+b\cdot D$), we have
\[
w(x,t)\leq \|u_0\|_{\infty,\T}\ ,
\]
allowing to conclude
\[
u(x,t)\leq  \|u_0\|_{\infty,\T}+\frac{T^\beta}{\Gamma(1+\beta)}\|F\|_{\infty;Q_T}.
\]
The lower bound follows by applying a minimum principle to the function $w(x,t)=u(x,t)+\frac{M}{\Gamma(1+\beta)}t^{\beta}$.
\end{proof}

\begin{lemma}\label{reprsemic}
Let $u$ be a classical solution to \eqref{xfracHamilton-Jacobisemi}. Then, there exists a (nonnegative) classical solution $\rho$ to \eqref{adjoint}. Moreover, we have the estimate
\[
\int_{0}^\tau\int_{\T}|Du|^{\gamma}\rho \, dxdt \le C,
\]
where $C$ depends on $K$ and not on $\rho_\tau$ nor $\tau$.
\end{lemma}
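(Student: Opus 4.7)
The plan is to follow the nonlinear adjoint scheme in three movements: construct a classical nonnegative solution $\rho$ to the linear backward Fokker--Planck equation \eqref{adjointmfg} with drift $b(x,t):=D_pH(x,Du(x,t))$; derive a duality identity by testing \eqref{xfracHamilton-Jacobisemi} against $\rho$ and \eqref{adjointmfg} against $u$; then exploit the coercivity assumption \eqref{H1} to extract the desired bound.

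For the existence of $\rho$, since $u$ is a classical solution on $Q_T$ the drift $b$ is continuous and bounded on $\bar Q_T$, so Proposition \ref{well} yields a nonnegative weak solution $\rho\in W$. Mass conservation $\int_\T\rho(\cdot,t)\,dx=\int_\T\rho_\tau\,dx=1$ is obtained by integrating the equation over $\T$ and using periodicity to discard the divergence and Laplacian contributions. Because $b$ and $\rho_\tau$ are smooth, the bootstrap outlined in Remark \ref{incrreg}, combined with the $L^p$-maximal regularity of Theorem \ref{thm;maxreg} and the Schauder estimates of Theorem \ref{Regularity}, upgrades $\rho$ to a classical solution on $Q_\tau$.

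For the duality identity, multiplying \eqref{xfracHamilton-Jacobisemi} by $\rho$, \eqref{adjointmfg} by $u$, subtracting and integrating over $Q_\tau$, the spatial integrations by parts (periodic boundary conditions) kill the $\sigma\Delta$ contributions and convert $-u\,\mathrm{div}(D_pH\,\rho)$ into $Du\cdot D_pH(x,Du)\,\rho$. Applying Lemma \ref{intparts} pointwise in $x$ to handle the fractional time derivatives, and using Fubini, produces
\begin{align*}
\iint_{Q_\tau}\bigl[Du\cdot D_pH(x,Du)-H(x,Du)\bigr]\rho\,dxdt &= \int_\T\rho_\tau\,(I^{1-\beta}_{(0,\tau]}u)(x,\tau)\,dx\\
&\quad -\int_\T u_0\,(I^{1-\beta}_{[0,\tau)}\rho)(x,0)\,dx-\iint_{Q_\tau}V\rho\,dxdt.
\end{align*}
By \eqref{H1} the left-hand side is bounded below by $C_H\iint|Du|^\gamma\rho-c_H\tau$, where I used $\int_\T\rho(\cdot,t)\,dx=1$. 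Each term on the right is then bounded uniformly in $\tau\leq T$ and $\rho_\tau$: the Riemann--Liouville integrals are controlled in $L^\infty$ by $\tau^{1-\beta}/\Gamma(2-\beta)$ times $\|u\|_\infty$, respectively by the same factor times $\int_\T\rho(\cdot,0)\,dx=1$; the sup-norm of $u$ is dominated via Lemma \ref{mp} after linearizing $H(x,Du)=H(x,0)+\bigl(\int_0^1 D_pH(x,sDu)\,ds\bigr)\cdot Du$, with bounded coefficient since $u$ is classical; and $\iint V\rho\leq T\|V\|_\infty$.

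The main delicate point is the identification and control of the fractional boundary terms $\int_\T\rho_\tau(I^{1-\beta}_{(0,\tau]}u)(x,\tau)\,dx$ and $\int_\T u_0(I^{1-\beta}_{[0,\tau)}\rho)(x,0)\,dx$, which arise in place of the clean $u(\tau)\rho(\tau)-u(0)\rho(0)$ of the classical ($\beta=1$) adjoint duality. Their estimation hinges decisively on the mass-conservation identity for $\rho$, which guarantees that $\int_\T(I^{1-\beta}_{[0,\tau)}\rho)(x,0)\,dx=\tau^{1-\beta}/\Gamma(2-\beta)$ independently of $\rho_\tau$, a feature essential for the uniformity of the final constant $C$.
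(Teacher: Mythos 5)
Your proposal is correct and follows essentially the same route as the paper: existence and nonnegativity of $\rho$ from Proposition \ref{well} with a bootstrap to classical regularity, the duality identity obtained by cross-testing the two equations with Lemma \ref{intparts}, the coercivity \eqref{H1} to isolate $\iint|Du|^\gamma\rho$, the sup-norm bound on $u$ via linearization and Lemma \ref{mp}, and control of the Riemann--Liouville boundary terms through mass conservation of $\rho$. Your explicit computation $\int_\T(I^{1-\beta}_{[0,\tau)}\rho)(x,0)\,dx=\tau^{1-\beta}/\Gamma(2-\beta)$ merely makes quantitative the $L^1$ bound the paper cites from the literature.
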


\begin{proof}
By multiplying the time-fractional Hamilton-Jacobi  equation
\[
\partial_{(0,t]}^\beta u-\sigma \Delta u+H(x,Du)=V
\]
by $-\rho$ and the adjoint equation by $u$, using Lemma \ref{intparts} and summing both expressions one easily obtains the following formula
\begin{equation}\label{reprformula}
\int_{\T}(I^{1-\beta}_{(0,\tau]}u)(x,\tau)\rho(x,\tau)dx=\int_{\T}u(x,0)(I^{1-\beta}_{[0,\tau)}\rho)(x,0)dx +\iint_{Q_\tau}V\rho \, dxdt+
\end{equation}
\begin{equation*}
+\iint_{Q_\tau}(D_pH(x,Du)\cdot Du-H(x,Du))\rho \, dxdt.
\end{equation*}
 Then, by \eqref{H1}, we get
\begin{multline}\label{ineq1}
\int_{\T}(I^{1-\beta}_{(0,\tau]}u)(x,\tau)\rho(x,\tau)dx\geq \int_{0}^{\tau}\int_{\T}V\rho\,dxdt +C_H\int_{0}^\tau\int_{\T}|Du|^{\gamma}\rho\,dxdt-\\-c_H\int_{0}^\tau\int_{\T}\rho\,dxdt
+\int_{\T}u(x,0)(I^{1-\beta}_{[0,\tau)}\rho)(x,0)dx\ .
\end{multline}
Since $u$ is a classical solution to \eqref{xfracHamilton-Jacobisemi}, a standard linearization argument and the application of the Comparison Principle for time-fractional viscous PDE (see Lemma \ref{mp}) yield
\begin{equation}\label{comp}
\norm{u}_{\infty;Q_T}\leq \norm{u_0}_{\infty;\T} + \frac{T^\beta}{\Gamma(1+\beta)}\big(\norm{V}_{\infty;Q_T} + \norm{H(\cdot,0)}_{\infty;\T}\big).
\end{equation}
Finally, using the facts that
\begin{align*}
\|(I^{1-\beta}_{(0,\tau]}u)(x,\tau)\|_{L^\infty(\T)}\leq C\|u(x,\tau)\|_{L^\infty(\T)},\\
\|(I^{1-\beta}_{[0,\tau)}\rho)(x,0))\|_{L^1(\T)}\leq C_1 \|\rho(x,0)\|_{L^1(\T)},
\end{align*}
for all $t$ (see e.g. \cite{KKL}), and plugging \eqref{comp} in \eqref{ineq1} we conclude the desired estimate.
\end{proof}
\begin{thm}\label{semicest}
Assume that $V \in C([0,T];C^{2+\frac{\gamma}{\beta}}(\T))$, $u_0\in C^2(\T)$ and let   $K > 0$ be such that
\[
\norm{V}_{C^{2}_x(Q_T)} + \|u_0\|_{C^2(\T)} \leq K.
\]
Then a  classical solution $u$ to \eqref{xfracHamilton-Jacobisemi} satisfies
\[
\|I^{1-\beta}(D^2u(x,t)\xi\cdot\xi)\|_{L^\infty(\T)}\leq C_1
\]
for $\xi\in\R^d$, $|\xi|=1$, where $C_1$ depends on $K$. In particular, it follows that $u$ enjoys the estimate
\begin{equation}\label{gradientbound}
\|Du\|_{L^p(Q_\tau)}\leq C_2
\end{equation}
for every  $p\ge 1$ and some $C_2$ depending in particular on $K$, $\beta$ and not on $\sigma$. Moreover, $C_2$ remains bounded for bounded values of $\tau$.
\end{thm}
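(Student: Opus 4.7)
The approach is Evans' nonlinear adjoint method, adapted to the Caputo time-fractional setting. Fix a unit vector $\xi\in\R^d$ and set $w:=D^2_{\xi\xi}u$. Since the Caputo derivative commutes with $\partial_{x_i}$, differentiating \eqref{xfracHamilton-Jacobisemi} twice along $\xi$ yields the linear equation
\[
\partial^{\beta}_{(0,t]}w-\sigma\Delta w+D_pH(x,Du)\cdot Dw=D^2_{\xi\xi}V-D^2_{xx}H(\xi,\xi)-2D^2_{xp}H(\xi,DD_\xi u)-D^2_{pp}H(DD_\xi u,DD_\xi u),
\]
with $w(\cdot,0)=D^2_{\xi\xi}u_0$. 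I would then pick $\rho_\tau\in C^\infty(\T)$ with $\rho_\tau\geq 0$ and $\int_\T\rho_\tau=1$, and let $\rho$ be the corresponding smooth solution of the backward Fokker--Planck equation \eqref{adjointmfg} (existence and higher regularity coming from Proposition \ref{well} combined with the bootstrap indicated in Remark \ref{incrreg}). Multiplying the equation for $w$ by $\rho$, integrating on $Q_\tau$, applying Lemma \ref{intparts} in time, and integrating by parts in $x$ (so that the drift cancels against the drift in the adjoint equation), one arrives at the representation
\begin{multline*}
\int_\T (I^{1-\beta}_{(0,\tau]}w)(x,\tau)\,\rho_\tau(x)\,dx=\int_\T w(x,0)\,(I^{1-\beta}_{[0,\tau)}\rho)(x,0)\,dx\\
+\iint_{Q_\tau}\!\bigl[D^2_{\xi\xi}V-D^2_{xx}H(\xi,\xi)-2D^2_{xp}H(\xi,DD_\xi u)-D^2_{pp}H(DD_\xi u,DD_\xi u)\bigr]\rho\,dx\,dt.
\end{multline*}

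The core analytical step is bounding the right-hand side uniformly in $\rho_\tau$. Assumption \eqref{H5} makes the last summand \emph{favourable}: $-D^2_{pp}H(DD_\xi u,DD_\xi u)\rho\leq -C_H|Du|^{\gamma-2}|DD_\xi u|^2\rho+\tilde{C}_H|DD_\xi u|^2\rho$. Estimate \eqref{H4} gives $|D^2_{xp}H|\leq C_H|Du|^{\gamma-1}+\tilde{C}_H$, so Young's inequality $|Du|^{\gamma-1}|DD_\xi u|\leq\eta|Du|^{\gamma-2}|DD_\xi u|^2+C_\eta|Du|^\gamma$ absorbs the mixed term into the good quadratic quantity plus $O(|Du|^\gamma\rho)$. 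The $D^2_{xx}H$-term is dominated, via \eqref{H3}, by $|Du|^\gamma\rho+\rho$, and the $V$-term by $\|V\|_{C^2_x(Q_T)}\iint\rho$. By Lemma \ref{reprsemic} one has $\iint_{Q_\tau}|Du|^\gamma\rho\leq C(K)$, while $\iint_{Q_\tau}\rho=\tau$ by the mass conservation of Proposition \ref{well}. The boundary term is controlled by $\|u_0\|_{C^2(\T)}\,\|(I^{1-\beta}_{[0,\tau)}\rho)(\cdot,0)\|_{L^1(\T)}\leq CK$.

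Passing $\rho_\tau$ to a Dirac mass at an arbitrary $x\in\T$ (justified by the smoothness of $w$ and the continuity of $\rho$ up to $t=\tau$ that $\beta>1/2$ affords, cf.\ the discussion after \eqref{adjoint}), one extracts the pointwise one-sided bound $(I^{1-\beta}_{(0,\tau]}D^2_{\xi\xi}u)(x,\tau)\leq C_1$, uniformly in $(x,\tau,\xi)$, which yields the semiconcavity-type estimate in the statement. For the $L^p$ gradient bound, summing over $\xi=e_1,\dots,e_d$ gives $(I^{1-\beta}_{(0,\tau]}\Delta u)(x,\tau)\leq dC_1$, and rewriting the HJ equation as $\sigma\Delta u=\partial^{\beta}_{(0,t]}u+H(x,Du)-V$ together with coercivity \eqref{H1} yields $C_H|Du|^\gamma\leq c_H+\sigma\Delta u-\partial^{\beta}_{(0,t]}u+V$. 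Applying $I^{1-\beta}$ in time and using the composition identity $I^\beta\partial^{\beta}_{(0,t]}u=u-u_0$ together with Lemma \ref{mp} for $\|u\|_{L^\infty(Q_T)}$ leads to control on a fractional time-average of $|Du|^\gamma$, from which one extracts $\|Du\|_{L^p(Q_\tau)}\leq C_2$ for every $p\geq 1$ via Fubini, using that the estimate holds uniformly in the terminal time $\tau$. The independence of $C_2$ from $\sigma$ is automatic because $\sigma$ only appears multiplying a quantity that has been absorbed into the good part of the representation.

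The main obstacle I anticipate is the very last bookkeeping step: the careful manipulation of the compositions $I^{1-\beta}\!\circ\!\partial^{\beta}_{(0,t]}$ and the conversion of the weighted $(\tau-s)^{1-\beta}$-type time-average of $|Du|^\gamma$ into an unweighted $L^p$ norm. The fact that the semiconcavity bound holds for \emph{every} terminal time $\tau\in(0,T]$ is crucial here, since it permits differentiation of the estimate in $\tau$ and localization, which is not available in the classical parabolic analogue.
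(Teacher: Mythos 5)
Your derivation of the one-sided bound $\|I^{1-\beta}_{(0,\tau]}(D^2u\,\xi\cdot\xi)\|_{L^\infty(\T)}\leq C_1$ follows the paper's argument essentially verbatim: differentiate twice along $\xi$, test against the adjoint variable solving \eqref{adjointmfg}, apply Lemma \ref{intparts}, and control the right-hand side via \eqref{H3}--\eqref{H5}, Young's inequality, the crossed bound of Lemma \ref{reprsemic}, and the arbitrariness of $\rho_\tau$. (Minor typo: by \eqref{H5} the lower-order contribution of the convexity term is $\tilde C_H\rho$, not $\tilde C_H|DD_\xi u|^2\rho$.)

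The passage from the semiconcavity estimate to \eqref{gradientbound} is where you diverge from the paper, and your route has a genuine gap. First, \eqref{H1} is coercivity of $D_pH(x,p)\cdot p-H(x,p)$, not of $H$ itself; the inequality $C_H|Du|^\gamma\leq c_H+\sigma\Delta u-\partial^{\beta}_{(0,t]}u+V$ that you extract from the equation requires a lower bound $H(x,p)\gtrsim|p|^\gamma$ which is not among \eqref{H1}--\eqref{H5}. Second, $I^{1-\beta}\partial^{\beta}_{(0,t]}u=I^{2-2\beta}\partial_t u$ is a Caputo derivative of order $2\beta-1$ and is not controlled pointwise in $\tau$ by $\|u\|_{\infty}$; the composition identity you invoke, $I^{\beta}\partial^{\beta}_{(0,t]}u=u-u_0$, has the wrong order and does not apply. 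Third, moving $\sigma\Delta u$ to the right-hand side makes your final constant proportional to $\sigma$, contradicting the claimed $\sigma$-independence of $C_2$. The paper sidesteps all of this with an elementary observation: since $u$ is $\Z^d$-periodic, $\|Du(\cdot,\omega)\|_{L^\infty(\T)}\leq C\sup_{x\in\T,|\xi|=1}\partial^2_{\xi\xi}u(x,\omega)$, i.e.\ a one-sided bound on second directional derivatives already controls the gradient of a periodic function. Positivity of the kernel of $I^{1-\beta}$ then transfers the estimate to $I^{1-\beta}_{(0,\tau]}(\|Du\|_{L^\infty(\T)})\leq C_2$, and the unweighted $L^p$ bound follows from $1\leq\tau^\beta(\tau-\omega)^{-\beta}$ for $\omega\in(0,\tau)$ — with no return to the equation, no fractional-calculus bookkeeping, and no appearance of $\sigma$. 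You should replace the second half of your argument with this periodicity step.
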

\begin{proof}
Let $u$ be a classical solution and $\xi\in\R^d$, $|\xi|=1$. Arguing as in the next Theorem \ref{existenceHamilton-Jacobi} (see also Remark \ref{stclassical}), one can assume that $u$ has enough regularity to perform all the computations below. Set $v=\partial_\xi u$ and $w=\partial_{\xi\xi}^2u$. Then $w$ solves
\begin{multline}\label{eqw}
\partial_{(0,t]}^\beta w-\sigma\Delta w +Dv\cdot D^2_{pp}H(x,Du)\cdot Dv+D_pH(x,Du)\cdot Dw+2D^2_{p\xi}H(x,Du)\cdot Dv\\
+D^2_{p\xi}H(x,Du)\cdot Dv+D^2_{\xi\xi}H(x,Du)=V_{\xi\xi}
\end{multline}
with initial data $w(x,0)=\partial_{\xi\xi}^2u_0(x)$, which can be equivalently rewritten as an abstract Volterra equation of the form
\[
w(t)=\sigma\int_0^tg_{\beta}(t-\tau)\Delta w(t)dt+g_\beta\star F(t)
\]
where
\begin{align*}
F(t)&=V_{\xi\xi}-(Dv\cdot D^2_{pp}H(x,Du)\cdot Dv+D_pH(x,Du)\cdot Dw\\
&+2D^2_{p\xi}H(x,Du)\cdot Dv+D^2_{p\xi}H(x,Du)\cdot Dv+D^2_{\xi\xi}H(x,Du)).
\end{align*}

We test \eqref{eqw} against the adjoint variable $\rho$ solving \eqref{adjointmfg}. Using Lemma \ref{intparts} and integrating by parts in space
we  get
\begin{equation*}
\int_{\T}(I^{1-\beta}_{(0,\tau]}w)(x,\tau)\rho(x,\tau)dx+\iint_{Q_\tau}Dv\cdot D^2_{pp}H(x,Du)Dv \rho\, dxdt=\int_{\T}w(x,0)(I^{1-\beta}_{[0,\tau)}\rho)(x,0)\,dx-
\end{equation*}
\begin{equation*}
-2\iint_{Q_\tau}D_{p\xi}^2H(x,Du)\cdot Dv\ \rho\, dxdt-\iint_{Q_\tau}D_{\xi\xi}^2H(x,Du)\ \rho\, dxdt+\iint_{Q_\tau}V_{\xi\xi}\ \rho\, dxdt\ .
\end{equation*}
On one hand, by \eqref{H5} we have
\begin{equation*}
\iint_{Q_\tau}Dv\cdot D^2_{pp}H(x,Du)Dv\ \rho\, dxdt\geq C_1\iint_{Q_\tau}|Du|^{\gamma-2}|Dv|^2\ \rho\,dxdt-\tilde{C}_1\iint_{Q_\tau}\rho\,dxdt
\end{equation*}
and hence, using also \eqref{H3}-\eqref{H4}, we conclude
\begin{multline*}
\int_{\T}(I^{1-\beta}_{(0,\tau]}w)(x,\tau)\rho(x,\tau)dx+C_1\iint_{Q_\tau}|Du|^{\gamma-2}|Dv|^2\ \rho \, dxdt-\tilde{C}_1\iint_{Q_\tau}\rho\,dxdt  \\ \leq \int_{\T}w(x,0)(I^{1-\beta}_{(0,\tau]}\rho)(x,0)\,dx+C_2\iint_{Q_\tau}|Du|^{\gamma-1}|Dv|\ \rho dxdt+C_3\iint_{Q_\tau}|Du|^{\gamma}\ \rho\, dxdt \\ +(\tilde{C}_2+\tilde{C}_3)\iint_{Q_\tau}\rho\,dxdt+\iint_{Q_\tau}V_{\xi\xi}\rho\, dxdt.
\end{multline*}
Now, we apply Young's inequality to the second term on the right-hand side of the above inequality to get
\begin{equation*}
\iint_{Q_\tau}|Du|^{\gamma-1}|Dv|\ \rho\,dxdt\leq \frac{\epsilon^2}{2}\iint_{Q_\tau}|Du|^{\gamma-2}|Dv|^2\ \rho dxdt+\frac{1}{\epsilon^2}\iint_{Q_\tau}|Du|^{\gamma}\ \rho dxdt.
\end{equation*}
Taking $\epsilon$ so that $C_1=\frac{\epsilon^2}{2}$ we finally obtain the estimate
\begin{multline*}
\int_{\T}(I^{1-\beta}_{(0,\tau]}w)(x,\tau)\rho(x,\tau)dx\leq  \int_{\T}w(x,0)(I^{1-\beta}_{(0,\tau]}\rho)(x,0)\,dx+\left(\frac{1}{2C_1}+C_3\right)\iint_{Q_\tau}|Du|^{\gamma}\ \rho\, dxdt+\\
+\iint_{Q_\tau}V_{\xi\xi}\ \rho\,dxdt+\tilde{C}_4\ .
\end{multline*}
During the above computations $C_i=C_i(C_H)$. After passing to the supremum over $\rho_\tau$ one gets the estimate on
\[
\|I^{1-\beta}_{(0,\tau]}w(x,\tau)\|_{L^\infty(\T)}\leq C_1
\]
which in turn yields a control on the quantity
\begin{equation}\label{semicnonlocal}
I^{1-\beta}_{(0,\tau]}(\|Du\|_{L^\infty(\T)})\leq C_2.
\end{equation}
Indeed, due to the fact that $u$ is $\Z^d$-periodic one has
\[
\|Du(\cdot,\omega)\|_{L^\infty(\T)}\leq C\sup_{x\in\T,|\xi|=1}\partial_{\xi\xi}^2u(\cdot,\omega)
\]
for $\omega\in[0,\tau]$ and then \eqref{semicnonlocal} follows by the definition of $I^{1-\beta}$. In particular, estimate \eqref{semicnonlocal} assures the control on $\|Du\|_{L^p(Q_\tau)}$ for every $p$ since
\begin{multline*}
\iint_{Q_\tau}|Du(x,\omega)|^pdx d\omega\leq \tau^\beta\iint_{Q_\tau}(\tau-\omega)^{-\beta}|Du(x,\omega)|^pdx d\omega\\
=\tau^\beta\Gamma(1-\beta)I^{1-\beta}_{(0,\tau]}(\|Du\|_{L^p(\T)}^p)
\leq C_1\tau^\beta I^{1-\beta}_{(0,t]}(\|Du\|_{L^\infty(\T)}^p)
\leq C_2\tau^\beta.
\end{multline*}

\begin{rem}
We point out that a similar gradient bound can be achieved again by duality simply by considering the equation satisfied by $v=\partial_\xi u$ and assuming the right-hand side $V\in C([0,T];C^{1+\frac{\gamma}{\beta}}(\T))$. Indeed, $v$ solves
\[
\partial_{(0,t]}^\beta v-\sigma\Delta v+D_pH(x,Du)\cdot Dv+D_\xi H(x,Du)=V_\xi
\]
By testing against $\rho$ solving \eqref{adjoint} and integrating by parts one obtains
\[
\int_{\T}(I^{1-\beta}_{(0,\tau]}v)(x,\tau)\rho(x,\tau)+\iint_{Q_\tau}D_\xi H(x,Du)\rho\,dxdt=\int_{\T}v(x,0)(I^{1-\beta}_{(0,\tau]}\rho)(x,0)\,dx+\iint_{Q_\tau}V_\xi\rho\,dxdt
\]
and then one concludes using the bound in Lemma \ref{reprsemic} as above to get the nonlocal-in-time control
\[
I^{1-\beta}_{(0,\tau]}(\|Du\|_{L^\infty(\T)})\leq C
\]
However, in order to tackle problems for first-order Mean Field Games with nonlocal coupling and fractional time-derivative, where typically $C^2$ regularity on the right-hand side $V$ is assumed \cite{CLLP}, we prefer to keep the estimate for $w=\partial_{\xi\xi}u$ since this would yield a further information for the drift of the Fokker-Planck equation. 
\end{rem}


\end{proof}
\subsection{Existence and uniqueness for the time-fractional Hamilton-Jacobi equation}
This section is devoted to the proof of Theorem \ref{existenceHamilton-Jacobi} concerning
existence and uniqueness of classical solutions to \eqref{i;1} with regular right-hand side $V$.
The crucial step in the proof  is the gradient bound of the previous section which allows to extend the   solution from a local to a global time interval.  

\begin{proof}[Proof of Theorem \ref{existenceHamilton-Jacobi}]
Step 1: Local existence on $Q_\tau := \T \times (0,\tau)$ . Let $\tau\leq1$ and
\[
\mathcal{S}_a:=\left\{u\in \Vb_p^{2}(Q_\tau):u(0)=u_0\ ,\|u\|_{\Vb_p^{2}(Q_\tau)}\leq a\ ,p>d+\frac{2}{\beta}\right\}
\]
be the space on which we apply the contraction mapping principle. The parameter $a$ will be chosen large enough. Fix $z\in \Vb_p^{2}(Q_\tau)$, $p>d+\frac{2}{\beta}$ and let $w = Jz$ be the solution of the problem
\begin{equation}\label{frozen}
\begin{cases}
\partial_{(0,t]}^{\beta} w-\Delta w=V(x,t)-H(x,Dz)&\text{ in }Q_\tau\ ,\\
w(x,0)=u_0(x)&\text{ in }\T.
\end{cases}
\end{equation}
We remark that in view of Proposition \ref{emb} and the properties of $H$ the right-hand side of \eqref{frozen} belong to $L^p(Q_\tau)$, $p>1$ and then Theorem \ref{thm;maxreg} implies that \eqref{frozen} admits a unique solution $w\in \Vb_p^{2}(Q_\tau)$ satisfying the following estimate
\[
\|w\|_{\Vb_p^{2}(Q_\tau)}\leq C(\|V\|_{L^p(Q_\tau)}+\|H(x,Dz)\|_{L^p(Q_\tau)}+\|u_0\|_{W^{2-\frac{2}{p\beta},p}(\T)}).
\]
Notice that since $p>d+2/\beta$ we have $1/p<\beta/(d+2)<\beta$ for all $\beta\in(0,1)$, so that the initial trace is well-defined. We show that we can choose $\tau\in(0,T]$ sufficiently small so that $\|w\|_{\Vb_p^{2}(Q_\tau)}\leq a$. By \cite[Lemma 2.4]{CGM}
\[
\|H(x,Dz)\|_{L^p(Q_\tau)}\leq C_1\tau^{\frac{1}{2p}}\|H(x,Dz)\|_{L^{2p}(Q_{\tau})}\leq C_2\tau^{\frac{1}{2p}}\|Dz\|^{\gamma}_{\infty;Q_{\tau}}.
\]
Moreover, by the parabolic embeddings in Proposition \ref{emb}, we have
\[
\|Dz\|_{\infty;Q_\tau}\leq C_3(\|z\|_{\Vb_p^{2}(Q_\tau)}+\|u_0\|_{W^{2-\frac{2}{p\beta},p}(\T)})\ ,
\]
which both give
\[
\|H(x,Dz)\|_{L^p(Q_\tau)}\leq C_4\tau^{\frac{1}{2p}}(\|z\|_{\Vb_p^{2}(Q_\tau)}^\gamma +\|u_0\|_{W^{2-\frac{2}{p\beta},p}(\T)}^\gamma)\ .
\]

Then we are in position to show that $J$ maps $\mathcal{S}_a$ into itself. Indeed
\begin{multline*}
\|w\|_{\Vb_p^2(Q_\tau)}\leq C\left\{\|V\|_{L^p(Q_\tau)}+C_4\tau^{1/2p}(\|z\|_{\Vb_p^2(Q_\tau)}^\gamma+\|u_0\|_{W^{2-2/p\beta,p}(\T)}^\gamma)+\|u_0\|_{W^{2-2/p\beta,p}(\T)}\right\}\\
\leq C(\|V\|_{L^p(Q_\tau)}+\tau^{1/2p}\|z\|_{\Vb_p^2(Q_\tau)}^\gamma+\max\{\|u_0\|_{W^{2-2/p\beta,p}(\T)}^\gamma,\|u_0\|_{W^{2-2/p\beta,p}(\T)}\}).
\end{multline*}
We take 
\[
a\geq C(\max\{\|u_0\|_{W^{2-2/p\beta,p}(\T)}^\gamma,\|u_0\|_{W^{2-2/p\beta,p}(\T)}\}+\|V\|_{L^p(Q_\tau)})+1.
\]
Then, it follows that
\[
\|w\|_{\Vb_p^2(Q_\tau)}\leq C\tau^{1/2p}\|z\|_{\Vb_p^2(Q_\tau)}^\gamma+a-1.
\]
By taking $\tau$ sufficiently small we finally get $\|w\|_{\Vb_p^2(Q_\tau)}\leq a$.\\
To prove that $J$ is a contraction, one has to argue as above, exploiting also the fact that for bounded $z\in \Vb_p^{2}$, $p>d+\frac{2}{\beta}$, then $Dz$ is bounded in $L^{\infty}(Q_\tau)$. Using \cite[Lemma A.6]{DK} and using that $(z_1-z_2)(0)=0$ we have
\[
\|z_1-z_2\|_{L^p(Q_{\tau})}\leq C\tau^{\beta}\|\partial_{(0,t]}^{\beta} z\|_{L^p(Q_\tau)}
\]
for some positive $C$ depending merely on $\beta$ and $p$
%
and hence
\begin{multline*}
\norm{H(x,Dz_1)-H(x,Dz_2)}_{L^{p}(Q_\tau)}\leq C_1\norm{Dz_1-Dz_2}_{L^p(Q_\tau)}\\
\leq C_2\left(\epsilon\|D^2(z_1-z_2)\|_{L^p(Q_\tau)}+\epsilon^{-1}\|z_1-z_2\|_{L^p(Q_\tau)}\right)\\
\leq C_2\epsilon\|D^2(z_1-z_2)\|_{L^p(Q_\tau)}+C_3\epsilon^{-1}\tau^{\beta}\norm{\partial_{(0,t]}^{\beta}(z_1-z_2)}_{L^p(Q_\tau)},
\end{multline*}
for some positive constants $C_1,C_2,C_3$, where the second inequality is achieved via interpolation.

Then, by first taking $\epsilon$ small enough so that 
\[
C_1\epsilon\|D^2(z_1-z_2)\|_{L^p(Q_\tau)}\leq C_1\epsilon\|z_1-z_2\|_{\Vb_p^2(Q_{\tau})}\leq \frac14\|z_1-z_2\|_{\Vb_p^2(Q_{\tau})}
\]
and then choosing $\tau$ sufficiently small so that
\[
C_2\epsilon^{-1}\tau^{\beta-1/p}\norm{\partial_{(0,t]}^{\beta}(z_1-z_2)}_{L^p(Q_\tau)}\leq C_2\epsilon^{-1}\tau^{\beta}\|z_1-z_2\|_{\Vb_p^2(Q_{\tau})}\leq \frac14\|z_1-z_2\|_{\Vb_p^2(Q_{\tau})}\ ,
\]
we conclude that
\[
\|z_1-z_2\|_{\Vb_p^2(Q_{\tau})}\leq\frac12\|z_1-z_2\|_{\Vb_p^2(Q_{\tau})},
\]
ensuring the existence and uniqueness of a fixed point $Jz=z$ and hence a solution in the interval $[0,\tau]$. Moreover,
by Proposition \ref{emb} we have $z\in C([0,\tau];W^{2-\frac{2}{p\beta},p}(\T))$ and hence $|Dz|\in C([0,\tau];C^{\frac{\gamma}{\beta}}(\T))$. In view of the results in Section \ref{sec;schauder} (specifically  Theorem \ref{Schauder} or Theorem \ref{Regularity}) applied to the equation
\[
\partial_{(0,t]}^{\beta} u-\Delta u=f(x,t)
\]
with $f(x,t)=  V(x,t)-H(x,Dz) \in C([0,\tau];C^{\frac{\gamma}{\beta}}(\T))$,    we have $u\in C([0,\tau];C^{2+\frac{\gamma}{\beta}}(\T))$. Then, a bootstrap argument allows to conclude that $u\in C([0,\tau];C^{4+\frac{\gamma}{\beta}}(\T))$ using the regularity of $V$.\\

\par\smallskip
\textit{Step 2. Continuation of the solution on a larger interval $[0,\tau+\omega]$}. Let $w:\T\times[0,\tau]\to\R$ be the local classical solution to the fractional Hamilton-Jacobi equation with initial data $u_0$ in the interval $[0,\tau]$ obtained in the previous step. Define the set  
\[
\widetilde{\mathcal{S}}_a:=\left\{u\in \Vb_p^{2}(Q_{\tau+\omega}):u(t)=w(t)\ ,t\in[0,\tau]\ ,\|u\|_{\Vb_p^{2}(Q_{\tau+\omega})}\leq a\ ,p>d+\frac{2}{\beta}\right\}.
\]
Fix $z\in \widetilde{\mathcal{S}}_a$ and let $v=Jz$ be the solution to
\begin{equation*} 
\begin{cases}
\partial_{(0,t]}^{\beta} v-\Delta v=V-H(x,Dz)&\text{ in }Q_{\tau+\omega}\ ,\\
v(x,0)=u_0(x)&\text{ in }\T.
\end{cases}
\end{equation*}
Then, as above, for $\omega$ sufficiently small one proves that $\|Jz\|_{\Vb_p^{2}(Q_{\tau+\omega})}\leq a$. Indeed, if $z\in \widetilde{\mathcal{S}}_a$, then for all $t\in[0,\tau]$ we have $v(t)=Jz=w(t)$. For $t\in [\tau,\tau+\omega]$ using $L^p$ maximal regularity as in Step 1 one obtains that $\|v\|_{\Vb_p^{2}(Q_{\tau+\omega})}\leq a$. To prove that $J$ is a contraction, one can argue exactly in the same manner as in Step 1 to find
\[
\|Jz_1-Jz_2\|_{\Vb_p^{2}(Q_{\tau+\omega})}\leq\frac12\|z_1-z_2\|_{\Vb_p^{2}(Q_{\tau+\omega})}
\]
for every $z_1,z_2\in \widetilde{\mathcal{S}}_a$. Then, by the contraction mapping principle, one finds a unique fixed point $\bar{v}\in \widetilde{\mathcal{S}}_a$ which, by bootstrapping belongs to 
$C([0,\tau];C^{4+\frac{\gamma}{\beta}}(\T))$.
Repeating the same argument, one finds a maximal interval of existence $[0,\bar{T})$, where
\[
\bar{T}:=\sup\{\tau\in(0,T):\eqref{i;1} \text{ has a unique solution in $\Vb_p^2(Q_\tau)$}\}\ .
\]
\textit{Step 3. Global existence}. In this step, we restrict to $\beta \in (\frac 1 2,1)$ and we  use the a priori gradient bounds coming from Theorem \ref{semicest} to   show that $\|u\|_{\Vb_p^2(Q_\tau)}$ remains bounded as $\tau$ approaches to $\bar{T}$.\\
 Let $\tau\in (0,\bar{T})$ and $u\in \Vb_p^2(Q_\tau)$ be the unique solution to \eqref{i;1}, which, by bootstrapping, is a posteriori a classical solution. By Theorem \ref{semicest} with $\sigma=1$, we have
\[
\|Du\|_{L^p(Q_\tau)}\leq C,
\]
for every $p\ge 1 $, for some $C$ independently of $\tau$. Then, the Hamilton-Jacobi equation can  be written as
\[
\partial_{(0,t]}^\beta u-\Delta u=V(x,t)-H(x,Du)
\]
with initial data $u(x,0)=u_0$. Consider then the solution to the linear problem
\[
\begin{cases}
\partial_{(0,t]}^{\beta} v-\Delta v=V(x,t)&\text{ in }Q_\tau\ ,\\
v(x,0)=u_0(x)&\text{ in }\T.
\end{cases}
\]
By Theorem \ref{thm;maxreg}, the previous problem admits a unique solution $v\in \Vb_p^2(Q_\tau)$. In particular, one has
\[
\|u-v\|_{\Vb_p^2(Q_\tau)}\leq C_1\|H(x,Du)\|_{L^p(Q_\tau)}\leq  C_2(\|Du\|_{L^{p\gamma}(Q_\tau)}^\gamma+1)\leq C_3
\]
for some $C_3$ independent of $\tau$ by exploiting the gradient bound in Theorem \ref{semicest}. Since $\|v\|_{\Vb_p^2(Q_\tau)}$ stays bounded for $\tau\nearrow\bar{T}$, the same is true for $\|v\|_{\Vb_p^2(Q_\tau)}$, yielding thus the global existence.\\
\end{proof}
 Some final comments on the results are in order
\begin{rem}\label{stclassical}
Note that a space-time H\"older regularity result for the time-fractional Hamilton-Jacobi equation \eqref{i;1} can be obtained using Theorem \ref{Regularity} instead of Theorem \ref{Schauder}. More precisely, one first show, as in the above Step 1, that $z\in \Vb_p^2(Q)$. First, since $\partial_i$, $i=1,...,d$, is a bounded linear operator from $\Vb_p^2(Q)$ to $H_p^{\beta/2}(0,\tau;L^p(\T))\cap L^p(0,\tau;W^{1,p}(\T))$ (see e.g. \cite[Proposition 2.4]{CK} for the whole space case, the periodic case can be treated using transference arguments from $\R^d$ to $\T$ as in \cite{CG1}), it follows that $|Dz|\in H_p^{\beta/2}(0,\tau;L^p(\T))\cap L^p(0,\tau;W^{1,p}(\T))$. This immediately implies by the Mixed Derivative Theorem that 
\[
|Dz|\in H_p^{\frac\beta2}(0,\tau;L^p(\T))\cap L^p(0,\tau;W^{1,p}(\T))\hookrightarrow H_p^{\frac{\beta\zeta}{2}}(0,\tau;H_p^{1-\zeta}(\T))
\] 
for all $\zeta\in[0,1]$, whereas, owing to Remark \ref{spacetimeholder}, it is possible to conclude that $|Dz|$ belong to $C^{\gamma_1}(C^{\gamma_2})$ for suitable $\gamma_1,\gamma_2\in (0,1)$ whenever $p>d+2/\beta$. This finally gives that $H(x,Du)\in C^{\gamma_1}(C^{\gamma_2})$ and Theorem \ref{Regularity} eventually yields the space-time H\"older regularity result.\\
The latter Sobolev embedding is reminiscent of those for the parabolic class $\Hb_p^{1}$: we refer e.g. to \cite[Theorem A.7]{DK2}, where similar embeddings are proved in the time-fractional setting, but not in the periodic case (see also \cite[Proposition 2.2]{CT} for its local counterpart on the torus and \cite[Appendix A]{MPR} on the whole space).\\
Furthermore, note that the restriction $\beta\in(1/2,1)$ is necessary and a consequence of Theorem \ref{Schauder}. Heuristically, this is due to the fact that under the regime $\beta\in(1/2,1)$ the time-fractional parabolic operator behaves like the heat operator and this allows to use standard decay estimates of the heat semigroup.
\end{rem}

\begin{rem}
The restriction on $p>d+2/\beta$ in Theorem \ref{existenceHamilton-Jacobi}, when applying the contraction mapping procedure, is consistent with Lipschitz regularity results for time-fractional heat equations with $L^p$ right-hand side. In particular, let us consider the problem
\[
\begin{cases}
\partial_{(0,t]}^{\beta} v-\Delta v=f(x,t)&\text{ in }Q_T\ ,\\
v(x,0)=v_0(x)&\text{ in }\T.
\end{cases}
\]
with $v_0\in W^{2-\frac{2}{p\beta},p}(\T)$ and $f\in L^p(Q_T)$. Maximal $L^p$-regularity result and Sobolev embedding theorems yield $v\in \Vb_p^2(Q_T)$ and
\[
\Vb_p^2(Q_T)\hookrightarrow C(0,T;W^{2-\frac{2}{p\beta},p}(\T))\ .
\]
Then, one observes that if $v\in C(0,T;W^{2-\frac{2}{p\beta},p}(\T))$, then $|Dv|\in C(0,T;W^{1-\frac{2}{p\beta},p}(\T))$, and one finally concludes using that $W^{1-\frac{2}{p\beta},p}(\T)$ is embedded at least in $C(Q_T)$ whenever $(1-\frac{2}{p\beta})p>d$ by Lemma \ref{inclstatW}-(ii), i.e. $p>d+2/\beta$.
\end{rem}

\small


\medskip
\begin{flushright}
		\noindent \verb"camilli@sbai.uniroma1.it"\\
		SBAI, Universit\`{a} di Roma ``La Sapienza"\\
		via A.Scarpa 14, 00161 Roma (Italy)
		
\end{flushright}
\begin{flushright}

\noindent \verb"alessandro.goffi@gssi.it"\\
Gran Sasso Science Institute\\
viale Francesco Crispi 7, 67100 L'Aquila (Italy)
\end{flushright}

\end{document}